\documentclass[12pt]{amsart}
\usepackage{float}
\usepackage{graphicx,verbatim,amssymb}
\usepackage[active]{srcltx}
\usepackage[cp1251]{inputenc}

\vfuzz2pt 
\hfuzz2pt 
\newtheorem{thm}{Theorem}[section]

\newtheorem{lem}[thm]{Lemma}
\newtheorem{prop}[thm]{Proposition}

\newtheorem*{mainthm}{Main Theorem}

\theoremstyle{definition}

\theoremstyle{remark}

\numberwithin{equation}{section}

\newcommand{\R}{\mathbb R}
\def\C{\mathbb{C}}

\def\Lc{\mathcal{L}}
\def\Oc{\mathcal{O}}

 \def\Zc{\mathcal{Z}}

\def\al{\alpha}
\def\si{\sigma}
\def\P{\mathbb{P}}

\def\0{\varnothing}
\def\sm{\setminus}
\def\ol{\overline}
\def\wt{\widetilde} 
\def\vp{\varphi}
\def\vk{\varkappa}

\renewcommand\ge{\geqslant}

\def\PGL{\mathrm{PGL}}

\setlength{\topmargin}{0cm}
\setlength{\textheight}{21 cm} \setlength{\textwidth}{15 cm}
\setlength{\oddsidemargin}{-0.1cm}
\setlength{\evensidemargin}{-0.1cm}
\begin{document}

\title[Planarizations]
{On maps taking lines to plane curves}
\author{Vsevolod Petrushchenko}
\author{Vladlen Timorin}

\address{Faculty of Mathematics and Laboratory of Algebraic Geometry,
National Research University Higher School of Economics,
7 Vavilova St 117312 Moscow, Russia}

\email{vtimorin@hse.ru}

\thanks{
The first named author is grateful to International Laboratory
of Decision Choice and Analysis at NRU HSE for partial financial support.
}
\thanks{
The second named author was partially supported by
the Dynasty Foundation grant, RFBR grants 13-01-12449, 13-01-00969,
and AG Laboratory NRU HSE, MESRF grant ag. 11.G34.31.0023.
Theorem \ref{t:nf} comprises the research funded by RScF grant 
14-21-00053.
}


\begin{abstract}
We study cubic rational maps that take lines to plane curves.
A complete description of such cubic rational maps concludes the classification
of all planarizations, i.e., maps taking lines to plane curves.
\end{abstract}
\maketitle

\section{Introduction}
Let $\P^n$ be the projective space of dimension $n$ over the field $\R$
of all real numbers or the field $\C$ of all complex numbers.
In \cite{Ti}, a \emph{planarization} was defined as a mapping
$\Phi:U\to\P^n$, where $U\subset\P^2$ is an open subset, such that
$\Phi(\lambda\cap U)$ is a subset of a hyperplane, for every line $\lambda\subset\P^2$.
Studying planarizations is closely related to studying maps taking lines
to curves of certain linear systems, cf. \cite{Ti}; a classical result of this type
is the M\"obius--von Staudt theorem \cite{Mob,vS} about maps taking lines to lines,
sometimes called the Fundamental Theorem of Projective Geometry.
We will always assume that the planarizations are sufficiently smooth, i.e.,
sufficiently many times differentiable.
If the ground field is $\C$, then we assume analyticity.
The main result of this paper is a complete description of all planarizations in case $n=3$.

\begin{mainthm}
  Let $\Phi:U\to\P^3$ be a planarization.
  Then there is a nonempty open subset $V\subset U$, for which
  the planarization $\Phi|_V:V\to\P^3$ is trivial, or co-trivial, or quadratic, or
  dual quadratic.
\end{mainthm}

We need to explain the terminology.
A planarization $\Phi:V\to\P^3$ is said to be \emph{trivial} if
$\Phi(V)$ is a subset of a plane.
A planarization $\Phi:V\to\P^3$ is said to be \emph{co-trivial} if
there exists a point $b\in\P^3$ such that, for every line $\lambda\subset\P^2$,
the set $\Phi(\lambda\cap V)$ lies in a plane containing $b$.
Of course, logically, co-trivial planarizations include trivial planarizations.
On the other hand, there are ``more'' trivial planarizations than co-trivial
planarizations that are not trivial.
This is one of the reasons for distinguishing trivial planarizations
as a separate class; the second reason being a partial duality between trivial and
co-trivial planarizations.
Trivial planarizations can be described in terms of an arbitrary map
from $\P^2$ to $\P^2$, and co-trivial planarizations can be described in
terms of an arbitrary function on $\P^2$.

A map $\Phi:V\to\P^3$ is said to be a \emph{quadratic rational map} if in some
(hence any) system of homogeneous coordinates it is given by quadratic
homogeneous polynomials.
In other words, there are quadratic homogeneous polynomials $Q_0$, $Q_1$,
$Q_2$ and $Q_3$ in $x_0$, $x_1$, $x_2$ such that $\Phi$ maps a point
with homogeneous coordinates $[x_0:x_1:x_2]$ to the point with homogeneous
coordinates $[y_0:y_1:y_2:y_3]$, where
$$
y_\alpha=Q_\alpha(x_0,x_1,x_2),\quad \alpha=0,1,2,3.
$$
It is easy to see that every quadratic rational map takes lines to conics,
in particular, every quadratic rational map is a planarization.
Some quadratic planarizations are neither trivial nor co-trivial.

Another class of examples is provided by duality.
Let $\Phi:V\to\P^3$ be a planarization.
Recall that the dual projective plane $\P^{2*}$ consists of lines in $\P^2$,
and the dual projective space $\P^{3*}$ consists of planes in $\P^3$.
Let $V^*$ be the subset of $\P^{2*}$ consisting of all lines $\lambda\subset\P^2$
with the following property: the set $\Phi(\lambda\cap V)$ lies in a unique plane $P_\lambda$.
Note that the set $V^*$ is open, possibly empty.
The dual planarization $\Phi^*:V^*\to\P^{3*}$ is by definition the map taking
$\lambda$ to $P_\lambda$.
Given a coordinate representation of $\Phi$, it is easy to
write explicit formulas for $\Phi^*$.
It turns out that a planarization dual to a quadratic rational map is a special kind of cubic rational map.
Such planarization is called \emph{dual quadratic}.
It is rather obvious that the duality is symmetric: if $\Phi:V\to\P^3$ is a
planarization, $\Phi^*:V^*\to\P^{3*}$ is the dual planarization with a
nonempty domain $V^*$, and $V^{**}\cap V\ne\0$, then $\Phi=\Phi^{**}$ on
$V^{**}\cap V$.

It is proved in \cite{Ti} that a planarization $\Phi:U\to\P^3$ that is neither
trivial nor co-trivial must be a rational map of degree two or three, at
least on some nonempty open subset of $U$.
Thus, to prove the Main Theorem, it suffices to describe all cubic
planarizations.
A cubic planarization is defined globally as a rational map from
$\P^2$ to $\P^3$ (it may have some points of indeterminacy).
Moreover, it suffices to assume that the ground field is $\C$.
Thus the description of all planarizations reduces to some question
of classical complex algebraic geometry.
In the following sections, we will answer this question.

It is natural to consider the following equivalence relation on the set of all
planarizations. Given two planarizations $\Phi:V\to\P^3$, $\Phi':V'\to\P^3$ we say that
they are \emph{equivalent} if they coincide on some nonempty open set after
suitable projective coordinate changes in $\P^2$ and in $\P^3$.
In other terms, there are projective automorphisms $\eta\in \PGL_2$, $\mu\in \PGL_3$,
and a nonempty open subset $W\subset V\cap\eta^{-1}(V')$
such that $\Phi=\mu\circ\Phi'\circ\eta$ on $W$.
In Section \ref{s:norf}, we describe all equivalence classes of
planarizations over real numbers by specifying a representative in each class.
These representatives will also be called \emph{normal forms} of planarizations,
so that every planarization is projectively equivalent to some normal form.
Of course, there are infinitely many classes of trivial and co-trivial planarizations.
These classes can be described by means of function parameters, i.e., they
depend on some arbitrary functions.
Other than that, there are 16 classes.
Our classification is based on the classification of equivalence classes of quadratic
rational maps \cite{CSS}.

\subsubsection*{Organization of the paper}
In Section \ref{s:cubmap}, we recall some basic properties of cubic rational
maps and the associated linear webs of plane cubic curves.
In Section \ref{s:plan}, we address specific properties of cubic planarizations.
The main result of this section is that a cubic planarization that
is neither trivial nor co-trivial and that has only finitely many points
of indeterminacy must map $\P^2$ many-to-one to its image surface.
In Section \ref{s:desc}, we complete the description of
all cubic planarizations thus proving the Main Theorem.
Section \ref{s:quad} is a digression needed for a classification of
all planarizations up to equivalence.
In this section, we classify all quadratic planarizations.
Finally, in Section \ref{s:norf}, we give a list of normal forms
for planarizations.

\section{Cubic maps and base points}
\label{s:cubmap}
In this section, all spaces and maps are defined over complex numbers.
Let $\Phi:\P^2\dashrightarrow\P^3$ be a cubic rational map sending
a point of $\P^2$ with homogeneous coordinates $[x_0:x_1:x_2]$
to a point of $\P^3$ with homogeneous coordinates
$[y_0:y_1:y_2:y_3]$, where
$$
y_\al=\vp_\al(x_0,x_1,x_2),\quad \al=0,1,2,3,
$$
and $\vp_\alpha$ is a homogeneous polynomial in three variables of degree 3.
Recall that an \emph{indeterminacy point} of $\Phi$ is a point $x$ in $\P^2$
such that $\vp_\alpha(x)=0$ for all $\al=0$, 1, 2, 3.
This is precisely a point that does not have an image under $\Phi$.

Recall that $\Phi$ defines a linear system $\Lc_{\Phi}$
of plane cubics of dimension 3
(a three-dimensional linear system is called a \emph{linear web}).
By definition, $\Lc_{\Phi}$ is generated by the cubics $\vp_\alpha=0$,
i.e., the equation of any cubic in $\Lc_{\Phi}$ has the form
$$
c_0\vp_0+c_1\vp_1+c_2\vp_2+c_3\vp_3=0,
$$
where the coefficients $c_0$, $c_1$, $c_2$, $c_3$ are complex numbers not
vanishing simultaneously, thus $[c_0:c_1:c_2:c_3]$ can be thought
of as a point in $\P^3$, or, in more invariant terms, as a point
in the dual projective space $\P^{3*}$ defining a plane $P$ in $\P^3$.
The plane cubic $\vk_P$ associated with $P$ and given by the equation displayed
above contains the set of all points $x\in\P^2$ such that $\Phi(x)\in P$.
Indeterminacy points of $\Phi$ are also called the \emph{base points}
of $\Lc_{\Phi}$.
We will write $B_\Phi$ for the set of all base points of $\Lc_\Phi$.
Every cubic from $\Lc_{\Phi}$ contains the set $B_\Phi$.

If $B_\Phi$ contains an irreducible curve $\beta$, then this curve has degree at most three.
Let $\xi=0$ be an irreducible equation defining the curve $\beta$.
Note that the equations of all cubics from $\Lc_\Phi$ are divisible by $\xi$.
Therefore, the restriction of $\Phi$ to the complement of $\beta$ coincides
with the rational map $\xi^{-1}\Phi$ of degree $3-\deg(\beta)$.
For this reason, we will mostly assume that $B_\Phi$ is a finite set of points.
There is a natural way of assigning multiplicity to every point $b\in B_\Phi$.
Namely, the multiplicity $m(b)$ is equal to the minimum intersection index
of two cubics in $\Lc_\Phi$ at $b$.
Since all cubics in $\Lc_\Phi$ pass through $b$, we have $m(b)\ge 1$.
We will write $|B_\Phi|$ for the number of points in $B_\Phi$ counting multiplicities.
In other terms, we have by definition
$$
|B_\Phi|=\sum_{b\in B_\Phi} m(b).
$$

In what follows, we will write $S_\Phi=\Phi(\P^2\sm B_\Phi)$ for the image of $\Phi$.
The following two propositions are classical and well known but we recall the
proofs for completeness.

Suppose that $B_\Phi$ is finite and that $S_\Phi$ has dimension two.
By \cite[Theorem II.7]{Bea} (elimination of indeterminacy),
there exists a compact projective surface $X$, a regular morphism $\pi:X\to\P^2$
that is a finite composition of blow-ups, and a regular morphism
$\Psi:X\to\P^3$ with the property $\Psi=\Phi\circ\pi$, which holds where
the right-hand side is defined.
By the Specialization Principle, cf. \cite[Theorem (3.25)]{Mum}, and since the
dimension of $\Psi(X)\supset S_\Phi$ is 2, a generic point in $\Psi(X)$ has
exactly $k$ preimages in $X$, where $k$ is the degree of the field
extension $\C(X)/\Psi^*\C(\Psi(X))$.
The difference $X\sm\pi^{-1}(\P^2\sm B_\Phi)$ consists of exceptional curves,
whose images under $\Psi$ lie in a proper Zariski closed subset of $\Psi(X)$.
Therefore, a generic point of $S_\Phi$ has exactly $k$ preimages in
$\P^2\sm B_\Phi$.
We will call the number $k$ the \emph{topological degree} of
$\Phi:\P^2\sm B_\Phi\to S_\Phi$.

\begin{prop}
\label{p:algdeg}
Suppose that $|B_\Phi|<\infty$ and $\dim(S_\Phi)=2$.
Let $k$ be the topological degree of $\Phi:\P^2\sm B_\Phi\to S_\Phi$.
Then the projective closure of $S_\Phi$ is a surface of degree 
$(9-|B_\Phi|)/k$, in particular, $|B_\Phi|<9$.
\end{prop}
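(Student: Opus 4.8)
The plan is to realize $d:=\deg\ol{S_\Phi}$, where $\ol{S_\Phi}$ denotes the projective closure of $S_\Phi$, as the number of points in which $\ol{S_\Phi}$ meets a sufficiently generic line $\ell\subset\P^3$, and to count these points through the web $\Lc_\Phi$. Write $\ell=P_1\cap P_2$ for generic planes $P_1,P_2$. Since $\Phi(x)\in P_i$ precisely when $x$ lies on the cubic $\vk_{P_i}\in\Lc_\Phi$, for $x\notin B_\Phi$ we have $x\in\vk_{P_1}\cap\vk_{P_2}$ iff $\Phi(x)\in\ell\cap S_\Phi$; thus, as a set, $\vk_{P_1}\cap\vk_{P_2}=B_\Phi\sqcup\Phi^{-1}(\ell\cap S_\Phi)$. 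For generic $\ell$ the set $\ell\cap\ol{S_\Phi}$ is finite (a generic line is not contained in the surface), and one may further demand that $\ell$ avoid the at most one-dimensional locus of points of $S_\Phi$ with positive-dimensional $\Phi$-fibre; then $\Phi^{-1}(\ell\cap S_\Phi)$ is finite, hence so is $\vk_{P_1}\cap\vk_{P_2}$ because $|B_\Phi|<\infty$. So Bézout's theorem applies and $\vk_{P_1}\cap\vk_{P_2}$ consists of $9$ points counted with intersection multiplicity.

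Next I would peel off the base points. At each $b\in B_\Phi$ one has $(\vk_{P_1}\cdot\vk_{P_2})_b\ge m(b)$ by the definition of $m(b)$, and the local intersection index is an upper semicontinuous function of $(P_1,P_2)$, so it equals $m(b)$ on a dense open set of pairs; as $B_\Phi$ is finite, a single generic pair attains all these minima simultaneously. Thus for generic $(P_1,P_2)$ the base locus contributes exactly $\sum_b m(b)=|B_\Phi|$ to Bézout's count, and the residual set $\Phi^{-1}(\ell\cap S_\Phi)=(\vk_{P_1}\cap\vk_{P_2})\sm B_\Phi$ carries total intersection multiplicity $9-|B_\Phi|$.

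It remains to match this with $d$. For generic $\ell$ the intersection $\ell\cap\ol{S_\Phi}$ has $d$ points and is transverse; moreover, since a generic line in $\P^3$ misses any fixed curve, $\ell$ can be taken to avoid the bad proper closed subset of $\ol{S_\Phi}$ consisting of $\ol{S_\Phi}\sm S_\Phi$, the singular locus of $\ol{S_\Phi}$, the branch locus of $\Phi:\P^2\sm B_\Phi\to S_\Phi$, and the $\Psi$-image of the exceptional curves of $\pi$. By the discussion of the topological degree preceding the proposition, each of the $d$ points of $\ell\cap\ol{S_\Phi}$ then has exactly $k$ preimages in $\P^2\sm B_\Phi$; these lie on $\vk_{P_1}\cap\vk_{P_2}$, and transversality of $\ell$ with the smooth surface $\ol{S_\Phi}$ forces $\vk_{P_1}$ and $\vk_{P_2}$ to meet transversally at each of them. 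Hence the residual set consists of exactly $kd$ simple points, so $9-|B_\Phi|=kd$, i.e. $d=(9-|B_\Phi|)/k$. Since $d\ge1$ and $k\ge1$, this forces $9-|B_\Phi|\ge1$, i.e. $|B_\Phi|<9$.

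The step I expect to be the main obstacle is the simultaneous genericity bookkeeping: producing one line $\ell$ (one pair $P_1,P_2$) that is transverse to $\ol{S_\Phi}$, misses all the one-dimensional bad loci, makes $\Phi$ unramified over $\ell\cap\ol{S_\Phi}$, keeps $\vk_{P_1}\cap\vk_{P_2}$ finite, and drives every base-point intersection index down to its minimum $m(b)$ — together with checking that these finitely many dense open conditions have a common point. Alternatively, one can bypass part of the transversality analysis by passing to the blow-up $X$: the line bundle $\Psi^{*}\Oc_{\P^3}(1)$ is represented by the strict transform under $\pi$ of a generic member of $\Lc_\Phi$, whence $(\Psi^{*}\Oc_{\P^3}(1))^2=9-|B_\Phi|$, while the projection formula applied to the generically $k$-to-one morphism $\Psi:X\to\ol{S_\Phi}$ gives $(\Psi^{*}\Oc_{\P^3}(1))^2=k\,d$.
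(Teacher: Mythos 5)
Your proof is correct and follows essentially the same route as the paper's: realize $d=\deg\ol S_\Phi$ via a generic line $\ell=P_1\cap P_2$, apply B\'ezout to the two cubics $\vk_{P_1},\vk_{P_2}\in\Lc_\Phi$, let the base points absorb $|B_\Phi|=\sum_b m(b)$ (generic pairs attaining the minima by semicontinuity, which is the paper's set $\Zc_3$), and identify the residual $9-|B_\Phi|$ with $dk$ simple points; the paper merely packages your hands-on genericity bookkeeping into the Kleiman transversality theorem plus the exceptional locus $E$, and your closing blow-up computation $(\Psi^*\Oc_{\P^3}(1))^2=kd=9-|B_\Phi|$ is just an optional variant of the same count.
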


\begin{proof}
Let $d$ denote the degree of the surface $\ol S_\Phi$, the closure of $S_\Phi$ in $\P^3$.
By the Kleiman transversality theorem, there is a proper Zariski closed set
$\Zc_1$ of lines in $\P^3$ such that every line $L\not\in\Zc_1$ intersects
the set $S_\Phi$ transversely at exactly $d$ points.
There is some at most one-dimensional exceptional subvariety $E$ of $S_\Phi$ such that
all points outside of $E$ have exactly $k$ preimages under
$\Phi:\P^2\sm B_\Phi\to S_\Phi$.
There is a proper Zariski closed set $\Zc_2$ of lines in $\P^3$ containing all
lines intersecting $E$.
Every line $L\subset\P^3$ defines a pencil $\Lc_\Phi(L)\subset\Lc_\Phi$
consisting of all cubics $\vk_P$, where $P$ runs through all planes containing $L$.
Let $\nu(L)$ be the sum of intersection indices of two generic curves
in $\Lc_\Phi(L)$ at points of $B_\Phi$.
Clearly, there is a proper Zariski closed set $\Zc_3$ of lines containing
all lines $L$ with $\nu(L)\ne |B_\Phi|$.

Consider a line $L$ not in $\Zc_1\cup\Zc_2\cup\Zc_3$.
Then the set $S_\Phi\cap L$ consists of $d$ transverse intersection points.
The line $L$ can be represented as the intersection of two planes $P_1$ and $P_2$.
Let $\vk_1$ and $\vk_2$ be the corresponding conics in $\Lc_\Phi$.
Since $L\not\in\Zc_1\cup\Zc_2$, the intersection $\vk_1\cap\vk_2$ is a
disjoint union of $dk$ transverse intersection points and the set $B_\Phi$.
Since $L\not\in\Zc_3$, the sum of intersection indices of $\vk_1$ and $\vk_2$
at points of $B_\Phi$ is equal to $|B_\Phi|$.
By the Bezout theorem, we have $9=3^2=dk+|B_\Phi|$.
\end{proof}

\begin{prop}
\label{p:curveB}
Let $\lambda\subset\P^2$ be a line such that $\Phi(\lambda\sm B_\Phi)$ lies
in a nonsingular conic. Then $\lambda$ intersects the set $B_\Phi$.
\end{prop}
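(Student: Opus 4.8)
The plan is to argue by contradiction: suppose $\lambda\cap B_\Phi=\0$, and show that then $\Phi(\lambda\sm B_\Phi)$ cannot lie in a nonsingular conic, by computing the degree of the rational curve that $\Phi$ traces out along $\lambda$. First I would restrict the defining cubics to the line. Picking homogeneous coordinates $[s:t]$ on $\lambda\cong\P^1$, each $\vp_\al$ restricts to a binary form $f_\al(s,t)$ of degree $3$. The hypothesis $\lambda\cap B_\Phi=\0$ says precisely that $f_0,f_1,f_2,f_3$ have no common zero on $\P^1$; over $\C$ this forces $\gcd(f_0,\dots,f_3)$ to be constant, so the four forms define an honest morphism $\psi=\Phi|_\lambda\colon\P^1\to\P^3$. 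Either $\psi$ is constant, or its image $C'=\ol{\psi(\P^1)}$ is an irreducible rational curve. In the latter case I would pull back a generic hyperplane $H$: on one hand $\psi^{-1}(H)$ is the zero divisor on $\P^1$ of a generic linear combination $c_0f_0+\dots+c_3f_3$, a nonzero binary cubic, hence a divisor of degree $3$; on the other hand this divisor has degree $(\deg\psi)\cdot(\deg C')$, where $\deg\psi$ is the degree of $\psi$ onto its image. Thus $(\deg\psi)(\deg C')=3$, and since $3$ is prime, $\deg C'\in\{1,3\}$.

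Next I would feed in the conic hypothesis. If $C'$ were contained in a nonsingular conic $C$, then $C$ is irreducible of degree $2$, and a one-dimensional irreducible subvariety of $C$ must equal $C$; hence $\deg C'=2$, contradicting $\deg C'\in\{1,3\}$. Finally I would dispose of the case that $\psi$ is constant: then all $f_\al$ are proportional to a single binary form $f$ which is not identically zero (otherwise $\lambda\subset B_\Phi$) and has degree $3$, so $f$ has a root $p\in\P^1$; at $p$ all the $\vp_\al$ vanish, i.e.\ $p\in\lambda\cap B_\Phi$, again a contradiction. These cases are exhaustive, so $\lambda$ must meet $B_\Phi$.

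The argument is short, and I do not anticipate a serious obstacle; the only care needed is around degeneracies — confirming that "lies in a nonsingular conic" really pins the swept curve to be the entire conic rather than a proper subset, and handling the constant and identically-zero restrictions cleanly. The substantive point, that a rational curve parametrized by binary cubics without a common factor has degree $1$ or $3$ but never $2$, is elementary once the reduction to $\P^1$ is made.
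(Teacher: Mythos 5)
Your proof is correct and follows essentially the same route as the paper: assuming $\lambda\cap B_\Phi=\0$, both arguments pull back a generic hyperplane under $\Phi|_\lambda$ and compare the degree-$3$ count on $\lambda\cong\P^1$ with the even count forced by the nonsingular conic (the paper writes this as $3=2d_\lambda$, you as $3=(\deg\psi)(\deg C')$ with $\deg C'=2$). Your explicit handling of the constant case and of the reduction to binary cubics without a common factor is a welcome bit of extra care, but it is the same degree-counting contradiction.
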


\begin{proof}
  Let $C$ be the conic containing the set $\Phi(\lambda\sm B_\Phi)$.
  Similarly to the discussion presented above, there is a well-defined
  \emph{topological degree} $d_\lambda$ of the mapping $\Phi:\lambda\to C$. 
  A generic point of $C$ has exactly $d_\lambda$ preimages in $\lambda$,
  and these preimages have multiplicity one in the sense that 
  the differential of $\Phi:\lambda\to C$ does not vanish at these points.
  
  Let $\xi=0$ be an equation of a generic plane in $\P^3$.
  Then $\xi$ can be thought of as a section of the line bundle $\Oc_{\P^3}(1)$
  on $\P^3$.
  The restriction of $\xi$ to $C$ has two simple zeros.
  On the other hand, the section $\xi\circ\Phi$ of the line bundle $\Oc_\lambda(3)$
  on $\lambda$ has 3 zeros counting multiplicities.
  We obtain that $3=2 d_\lambda$, a contradiction.
  Alternatively, the proposition can be easily derived
  from Lemmas 2.6 and 2.7 of \cite{Ti}.
\end{proof}

\section{Planarizations}
\label{s:plan}
As before, we consider a cubic rational map $\Phi$.
We now assume that $\Phi$ is a \emph{planarization}, i.e., the $\Phi$-image of
every line in $\P^2$ is a subset of some plane in $\P^3$.
We say that $\Phi$ is \emph{strictly cubic} if there is no Zariski open subset $U\subset\P^2$
such that the restriction of $\Phi$ to $U$ coincides with some quadratic rational map.
As we have seen, for every strictly cubic planarization $\Phi$, the set $B_\Phi$ is
finite.

Consider any line $\lambda\subset\P^2$.
It is called \emph{non-special} if $\Phi(\lambda)$ is not a line.
By Proposition \ref{p:curveB}, or by the M\"obius--von Staudt theorem,
a generic line in $\P^2$ is non-special for $\Phi$.
Since $\Phi$ is a planarization, for every non-special line $\lambda$, there is a unique plane
$P_\lambda$ in $\P^3$ containing $\Phi(\lambda)$.
The preimage $\Phi^{-1}(P_\lambda)$ lies in a unique cubic curve
$\vk_\lambda\in\Lc_\Phi$ containing $\lambda$.
In fact, $\Phi^{-1}(P_\lambda)$ coincides with $\vk_\lambda\sm B_\Phi$.
Then $\vk_\lambda=\lambda\cup \si_\lambda$, where $\si_\lambda$ is a conic.
The curves $\Phi(\lambda)$ and $\Phi(\si_\lambda)$ are two plane curves in $\P^3$.
In this section, we will prove the following theorem.

\begin{thm}
 \label{t:deg}
Suppose that $\Phi:\P^2\dashrightarrow\P^3$ is a strictly cubic planarization, which is not
trivial and not co-trivial.
Then the topological degree of $\Phi:\P^2\sm B_\Phi\to S_\Phi$ is bigger than one.
\end{thm}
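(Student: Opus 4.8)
The plan is to argue by contradiction: suppose the topological degree $k$ equals one, so that $\Phi:\P^2\sm B_\Phi\to S_\Phi$ is generically one-to-one (birational onto its image surface). Under this assumption I want to show that $\Phi$ must in fact be trivial or co-trivial, contradicting the hypothesis. The key geometric object to exploit is the family of conics $\si_\lambda$: for every non-special line $\lambda$, the preimage $\Phi^{-1}(P_\lambda)$ is $\vk_\lambda\sm B_\Phi = (\lambda\cup\si_\lambda)\sm B_\Phi$, and since $k=1$ the plane section $S_\Phi\cap P_\lambda$ is (birationally) the union of the two plane curves $\Phi(\lambda)$ and $\Phi(\si_\lambda)$. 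First I would use Proposition \ref{p:algdeg}: with $k=1$ the surface $\ol S_\Phi$ has degree $9-|B_\Phi|$, and since $\Phi$ is strictly cubic (so not a quadratic map, hence not a projection of a Veronese-type surface) and not trivial, $|B_\Phi|$ is constrained; in particular $\deg\ol S_\Phi\ge 3$ unless $|B_\Phi|\ge 6$.

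The heart of the argument is to analyze the one-parameter behavior of $\si_\lambda$ as $\lambda$ varies in a pencil. Fix a general point $p\in\P^2$ and consider the pencil of lines $\lambda_t$ through $p$. The conics $\vk_{\lambda_t}=\lambda_t\cup\si_{\lambda_t}$ sweep out, and their residual conics $\si_{\lambda_t}$ form a one-parameter family of plane conics in $\P^2$; correspondingly the planes $P_{\lambda_t}$ form a pencil (or a more degenerate family) in $\P^{3*}$, i.e. a point in $\P^{3*}$ moving along a curve, dual to a space curve or a point in $\P^3$. I would show that the base locus of the pencil $\{\vk_{\lambda_t}\}$ forces strong incidence constraints: each $\vk_{\lambda_t}$ contains $B_\Phi$ and contains $\lambda_t$, hence contains $p$; therefore $p\in\si_{\lambda_t}$ for all $t$ for which $p\notin\lambda_t$ — impossible since $p$ lies on every $\lambda_t$ — so instead the conics $\si_{\lambda_t}$ all pass through the points where $\lambda_t$ meets $B_\Phi$, and as $t$ varies this pins down the configuration of base points. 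The upshot I am aiming for is that $B_\Phi$ must have a very special structure (e.g. lying on a line, or admitting a distinguished point through which all $\si_\lambda$ or all $P_\lambda$ pass), and such structure translates directly into co-triviality: all planes $P_\lambda$ pass through a common point $b\in\P^3$.

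Concretely, I expect the argument to split by $|B_\Phi|$ (which is $<9$ and, because $\Phi$ is strictly cubic and nontrivial, bounded below — a Cremona-type normalization of the web $\Lc_\Phi$ may be invoked here), and in each case to use Proposition \ref{p:curveB} to control the special lines: a line $\lambda$ with $\Phi(\lambda)$ a line must meet $B_\Phi$, so special lines form finitely many pencils through base points, and a general line in any pencil through $p$ is non-special. Combining the dimension count of Proposition \ref{p:algdeg} with the conic-family analysis should leave only the possibilities that $\Phi(\P^2)$ lies in a plane (trivial) or that all $P_\lambda$ share a common point (co-trivial). The main obstacle is the bookkeeping in the conic-family step: one must rule out that the residual conics $\si_\lambda$ degenerate (into line pairs or double lines) for infinitely many $\lambda$ without this degeneration itself being compatible with a genuinely cubic, non-(co)trivial $\Phi$ — this is where a careful use of the classification of quadratic maps \cite{CSS} governing the possible webs, together with Bezout counting of the base-point multiplicities $m(b)$, will be needed to close all cases.
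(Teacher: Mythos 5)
Your plan is only a sketch, and its central step is never actually established: the claim that the degree-one assumption forces all planes $P_\lambda$ (equivalently, all residual conics $\si_\lambda$) to satisfy a common incidence condition is announced (``I would show'', ``the upshot I am aiming for'') but not proved, and the one concrete incidence argument you attempt (each $\vk_{\lambda_t}$ contains $p$, hence $p\in\si_{\lambda_t}$, ``impossible\dots so instead\dots'') does not yield any conclusion. More importantly, the mechanism you rely on cannot work outside a narrow range of $|B_\Phi|$. Under topological degree one, Proposition \ref{p:algdeg} gives $\deg\ol S_\Phi=9-|B_\Phi|$, and the Bezout-type count for two residual conics, $(\si_\lambda\cdot\si_{\lambda'})_{B_\Phi}\ge|B_\Phi|$, contradicts the bound $4$ for conics without a common component only when $|B_\Phi|\ge 5$; this is precisely the argument the paper uses, but only in the special situation of Lemma \ref{l:Sdeg3} (image contained in a cubic surface, $|B_\Phi|=6$). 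When $|B_\Phi|\le 4$, i.e.\ when $\ol S_\Phi$ has degree $\ge 5$, your conic-family analysis gives no constraint at all, and neither the case split by $|B_\Phi|$ nor an appeal to \cite{CSS} (which classifies quadratic maps, not cubic webs) supplies a substitute.

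The missing ideas are exactly the ones the paper's proof is built on. After disposing of the dual quadratic case directly (there the dual map is quadratic and one exhibits two points with the same image, so the degree is already $>1$), the paper assumes degree one and proves that the critical locus $J_\Phi$ is finite (Propositions \ref{p:Jac} and \ref{p:Jac-fin}; this is where Lemma \ref{l:Sdeg3} and the dual planarization enter). Then, for a generic line $\lambda$, $\Phi(\lambda)$ is a rational plane cubic, hence nodal or cuspidal: the cuspidal case is impossible because the cusp preimages would sweep out a curve inside the finite set $J_\Phi$ (Proposition \ref{p:cusp}), while in the nodal case the two preimages of the node, as $\lambda$ varies, are Zariski dense in $\P^2$ (Lemma \ref{l:GammaP2}), which immediately forces the topological degree to exceed one. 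None of these steps---finiteness of $J_\Phi$, the nodal/cuspidal dichotomy, the density of node preimages---appears in your proposal, so what you have is an essential gap rather than an alternative proof.
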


\subsection{General properties of cubic planarizations}
We assume in this section that $\Phi$ is a strictly cubic planarization such that the
dimension of $S_\Phi$ is two, and prove some general properties of $\Phi$.

\begin{lem}
\label{l:gen-cub}
  For a generic choice of $\lambda$, the curve $\Phi(\lambda)$ is cubic.
\end{lem}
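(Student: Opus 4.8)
The plan is to study the restriction of $\Phi$ to a generic line and to read off the degree of the image from the pull-back of a hyperplane. Fix homogeneous cubic forms $\vp_0,\dots,\vp_3$ representing $\Phi$. Since $\Phi$ is strictly cubic, the base locus $B_\Phi$ is a finite set of points, so a generic line $\lambda$ is disjoint from $B_\Phi$; fix a linear isomorphism $\lambda\cong\P^1$. Then $\Phi|_\lambda$ is given by the four binary cubic forms $\vp_\al|_\lambda$, and since $\lambda$ meets no base point these forms have no common zero on $\lambda$; hence they define a genuine morphism $f_\lambda\colon\P^1\to\P^3$ whose image is the curve $\Phi(\lambda)$, which lies in a plane because $\Phi$ is a planarization.

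Next I would compute the degree of $\Phi(\lambda)$. For a generic plane $P=\{c_0y_0+\dots+c_3y_3=0\}$ in $\P^3$, the section $\sum_\al c_\al\,\vp_\al|_\lambda$ of $\Oc_{\P^1}(3)$ is a nonzero binary cubic form (it is nonzero for generic $c$ because the $\vp_\al|_\lambda$ do not all vanish, as $\lambda\not\subset B_\Phi$), so $f_\lambda^{-1}(P)$ is a divisor of degree $3$ on $\P^1$. On the other hand, writing $d_\lambda$ for the topological degree of $f_\lambda$ onto its image, this divisor has degree $d_\lambda\cdot\deg\Phi(\lambda)$; in particular $\Phi(\lambda)$ is not a point (the pull-back of a general plane is non-empty), so it is a genuine curve and $d_\lambda\cdot\deg\Phi(\lambda)=3$. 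Hence either $\deg\Phi(\lambda)=3$, or $\deg\Phi(\lambda)=1$ and $d_\lambda=3$.

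Finally I would exclude the case $\deg\Phi(\lambda)=1$. This would mean $\Phi(\lambda)$ is a line, i.e.\ $\lambda$ is special; but, as already recorded, a generic line in $\P^2$ is non-special for $\Phi$ (by Proposition \ref{p:curveB}, or by the M\"obius--von Staudt theorem together with the assumption that $\Phi$ is not trivial). Therefore $\deg\Phi(\lambda)=3$ for a generic $\lambda$, and since $\Phi(\lambda)$ is contained in a plane, it is a plane cubic.

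The argument is short because the real input — non-speciality of the generic line — is already available; the only points requiring a little care are verifying that the generic line avoids $B_\Phi$ (so that the restricted forms have no common factor and the equality $d_\lambda\deg\Phi(\lambda)=3$ is exact) and that $\Phi(\lambda)$ does not degenerate to a point. I do not expect a genuine obstacle here: the lemma is essentially the divisibility observation $\deg\Phi(\lambda)\mid 3$ combined with the already-established fact that the image of a generic line is not a line.
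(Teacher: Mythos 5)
Your argument is correct. The paper's own proof is essentially a one-line reduction: if $\Phi(\lambda)$ had degree at most $2$ for a Zariski open set of lines, then by \cite[Lemma 2.8]{Ti} the forms $\vp_\al$ would have a nontrivial common factor, contradicting the finiteness of $B_\Phi$; the paper then remarks that the statement also follows from Proposition \ref{p:curveB}. What you do is, in effect, to unwind that remark into a self-contained computation: for a line $\lambda$ disjoint from the finite set $B_\Phi$ the restriction is a genuine morphism with $f_\lambda^*\Oc_{\P^3}(1)\cong\Oc_{\P^1}(3)$, so $d_\lambda\cdot\deg\Phi(\lambda)=3$, which excludes degree $2$ outright (this is exactly the $3=2d_\lambda$ contradiction in the proof of Proposition \ref{p:curveB}) and leaves only the possibility of a line covered three-to-one. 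Note that the divisibility argument alone cannot rule out that last case -- a triple cover of a line is numerically consistent -- so your appeal to the previously recorded fact that a generic line is non-special (which in the paper ultimately rests on the M\"obius--von Staudt theorem together with strict cubicness) is a necessary input, not a cosmetic citation; by contrast, the route through \cite[Lemma 2.8]{Ti} disposes of degrees $1$ and $2$ uniformly. Your version buys independence from the external lemma of \cite{Ti} at the cost of invoking non-speciality separately; the paper's buys brevity. The small verifications you flag (that a generic $\lambda$ avoids $B_\Phi$, that $\Phi(\lambda)$ is not a point, and the degree formula $\deg f_\lambda^*\Oc(1)=d_\lambda\deg\Phi(\lambda)$ on a possibly singular image curve) are all handled correctly.
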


\begin{proof}
  Suppose that the degree of $\Phi(\lambda)$ is at most 2, for a Zariski open
  set of lines $\lambda$.
  Then, by \cite[Lemma 2.8]{Ti}, the map $\Phi$ is not strictly cubic,
  i.e., all $\varphi_\al$ have a nontrivial common factor, a contradiction with
  the assumption that $B_\Phi$ is finite.
  The result also follows from Proposition \ref{p:curveB}.
\end{proof}

\begin{lem}
  \label{l:fibers}
  Suppose that $\Phi$ is not co-trivial.
  Then all fibers of $\Phi$ are finite, i.e., there is no 
  semi-algebraic subset of dimension one in $\P^2$ mapping to a point.
\end{lem}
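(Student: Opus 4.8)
The plan is to argue by contradiction: suppose some irreducible curve $F\subset\P^2$ is contracted by $\Phi$ to a single point $p\in\P^3$. First I would observe that for a generic line $\lambda$, the point $\lambda\cap F$ lies on $F$, hence $\Phi(\lambda)$ passes through $p$; thus $p\in\Phi(\lambda)\subset P_\lambda$ for every non-special line $\lambda$ (where $P_\lambda$ is the unique plane containing $\Phi(\lambda)$). Since a generic line is non-special (Proposition \ref{p:curveB}), this says: for a Zariski-dense set of lines $\lambda$, the plane $P_\lambda$ contains the fixed point $p$. By continuity (or by a direct algebraic closure argument on the incidence variety of pairs $(\lambda,P_\lambda)$), every plane $P_\lambda$ arising from a non-special line contains $p$. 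But that is exactly the definition of $\Phi$ being co-trivial with center $b=p$, contradicting the hypothesis.

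The one point requiring care is that the assignment $\lambda\mapsto P_\lambda$ is a priori only defined on the open set of non-special lines, so I must check that ``$P_\lambda\ni p$ for generic $\lambda$'' really forces co-triviality, which by definition quantifies over \emph{all} lines whose image is planar. I would handle this by noting that the correspondence $\Gamma=\{(\lambda,P)\in\P^{2*}\times\P^{3*}:\Phi(\lambda)\subset P\}$ is a closed algebraic subset, that over the dense open set of non-special lines it is the graph of $\lambda\mapsto P_\lambda$, and that the condition $p\in P$ is closed; since it holds on a dense subset of $\Gamma$, it holds on all of $\Gamma$, in particular for every non-special line. Special lines (those with $\Phi(\lambda)$ a line) are covered trivially because a line through the contracted image still lies in some plane through $p$. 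Hence every $\Phi(\lambda)$ lies in a plane containing $p$, so $\Phi$ is co-trivial.

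For the remaining phrasing of the statement --- ``no semi-algebraic subset of dimension one maps to a point'' --- I would reduce to the curve case: a one-dimensional semi-algebraic set has Zariski closure a curve $F$ (a finite union of irreducible curves), and $\Phi$ restricted to a dense subset of $F$ is constant, hence $\Phi|_F$ is constant wherever defined, putting us in the situation above. The main obstacle, such as it is, is purely bookkeeping: making sure that the genericity of non-special lines (Lemma \ref{l:gen-cub} / Proposition \ref{p:curveB}) together with the closedness of the incidence correspondence genuinely pins down \emph{all} the relevant planes through $p$; once that is set up, the contradiction with non-co-triviality is immediate.
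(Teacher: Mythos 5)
Your proof is correct and takes essentially the same route as the paper: a contracted one-dimensional set meets a generic line, so $P_\lambda$ passes through the image point, which is exactly co-triviality. Your incidence-correspondence closure argument just makes explicit the passage from generic lines to all lines, a step the paper leaves implicit in ``It follows that $\Phi$ is co-trivial.''
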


\begin{proof}
 Suppose that $\Gamma\subset\P^2\sm B_\Phi$ is a semi-algebraic subset of
  dimension one such that $\Phi(\Gamma)$ is a point.
  Note that a generic line $\lambda\subset\P^2$ intersects $\Gamma$.
  Therefore, the plane $P_\lambda$ passes through the point $\Phi(\Gamma)$.
  It follows that $\Phi$ is co-trivial.
\end{proof}

Recall that, by our assumption, the image $S_\Phi$ has dimension two.
We will write $J_\Phi$ for the semi-algebraic set in $\P^2$,
on which the Jacobian of $\Phi:\P^2\sm B_\Phi\to S_\Phi$ vanishes.
In other terms, $J_\Phi$ consists of all points $p\in\P^2$ such that
the differential $d_p\Phi$ of $\Phi$ at $p$ is degenerate, i.e.,
has a nontrivial kernel.
Since $S_\Phi$ has dimension two, by the Sard lemma,
the Jacobian of $\Phi$ cannot vanish everywhere.
Therefore, the dimension of $J_\Phi$ is at most one.

\begin{prop}
  \label{p:Jac}
  Suppose that $\dim(J_\Phi)=1$.
  Then every component of $J_\Phi$ of dimension one is mapped to a subset of a plane.
\end{prop}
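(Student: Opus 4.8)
The plan is to argue by contradiction, using the planarization structure to force a component of $J_\Phi$ to be planar. Suppose $\gamma$ is a one-dimensional component of $J_\Phi$ whose image $\Phi(\gamma)$ is a curve (it cannot be a point, since by Lemma \ref{l:fibers} all fibers of $\Phi$ are finite) that does \emph{not} lie in any plane. The key geometric input is this: for a generic line $\lambda\subset\P^2$, the plane $P_\lambda$ contains $\Phi(\lambda)$, and $\lambda$ meets $\gamma$ (since $\gamma$ is a curve, a generic line crosses it). Thus $P_\lambda$ passes through at least one point of $\Phi(\gamma)$. I would first analyze how $\gamma$ and $\lambda$ interact: at a point $p\in\lambda\cap\gamma$, the differential $d_p\Phi$ is degenerate, so the image curve $\Phi(\lambda)$ is singular at $\Phi(p)$, or at least the tangent line to $\Phi(\lambda)$ there degenerates. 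The aim is to show that $P_\lambda$ must actually contain the tangent line to $\Phi(\gamma)$ at $\Phi(p)$, i.e. that $P_\lambda$ is ``tangent'' to the curve $\Phi(\gamma)$.

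Here is the mechanism I expect to exploit. Fix a generic point $q=\Phi(p)$ on $\Phi(\gamma)$ with $p\in\gamma$. Consider the pencil (or family) of lines $\lambda$ through $p$; most of them are non-special, and for each we get a plane $P_\lambda\ni q$. As $\lambda$ rotates about $p$, the planes $P_\lambda$ sweep out a family of planes through $q$. I would show that this family is in fact a \emph{pencil} of planes through a fixed line $\ell_q$ through $q$ — and that $\ell_q$ is the tangent line to $\Phi(\gamma)$ at $q$. The reason is that the curve $\Phi(\lambda)$ for $\lambda$ through $p$ is tangent, at $q$, to the direction $d_p\Phi(v)$ where $v$ spans $T_p\lambda$; but since $d_p\Phi$ is degenerate with a fixed kernel line (generically one-dimensional), as $v$ varies the image direction $d_p\Phi(v)$ varies only over a fixed \emph{line} in $T_q\P^3$ — the image of $d_p\Phi$ restricted appropriately — and one of these directions is the tangent to $\Phi(\gamma)$. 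Hence each $P_\lambda$ contains both $q$ and the fixed direction $d_p\Phi(T_p\P^2)$, which is a plane... I need to be careful: $d_p\Phi$ has rank $\le 2$, and its image is a subspace $W_p\subset T_q\P^3$ of dimension $\le 2$ containing the tangent to $\Phi(\gamma)$; every $P_\lambda$ with $\lambda\ni p$ must have tangent plane at $q$ containing $W_p$, hence \emph{equal} to $W_p$ if $\dim W_p=2$. So $P_\lambda$ is the \emph{same} plane $P_p$ for \emph{all} lines through $p$.

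From that point the argument closes quickly. If all lines through a generic $p\in\gamma$ give the same plane $P_p$, then $\Phi$ near $p$ maps a neighborhood into $P_p$: indeed every point near $p$ lies on some line through $p$, whose image lies in $P_p$. This would make $\Phi$ trivial (locally, hence on an open set, since $p$ was generic and $P_p$ varies algebraically but a one-parameter-or-less family of target planes all containing a two-dimensional image forces them to coincide) — contradicting the hypothesis that $\Phi$ is not trivial; or else $\dim W_p\le 1$, which means $d_p\Phi$ has rank $\le 1$ along $\gamma$, and then $\Phi(\gamma)$ together with the contracted directions shows $\gamma$ maps to a curve whose every tangent-plane condition is vacuous — here one instead observes directly that $P_\lambda$ must contain $\Phi(\gamma)$'s tangent everywhere along $\gamma$, so $\Phi(\gamma)$ is contained in the intersection of all these planes, which must then be a fixed plane. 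Either way $\Phi(\gamma)$ lies in a plane.

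The main obstacle I anticipate is the rank-one degenerate case and the bookkeeping of which directions $P_\lambda$ is forced to contain: distinguishing cleanly between ``$P_\lambda$ depends on $\lambda$'' and ``$P_\lambda$ is constant along the pencil through $p$'' requires knowing that $\gamma\not\subset B_\Phi$ and that generic lines through $p$ are non-special and meet $\gamma$ transversally, and it requires controlling the rank of $d_p\Phi$ generically on $\gamma$ — the subtle point being that $\gamma\subset J_\Phi$ only guarantees rank $\le 1$ on $\gamma$ \emph{in the kernel direction along $\gamma$}, not that the restriction $d_p\Phi|_{T_p\lambda}$ degenerates for the specific transverse line $\lambda$. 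I would handle this by choosing $\lambda$ generic through $p$ so that $T_p\lambda$ is transverse to $\ker d_p\Phi$, making $d_p\Phi|_{T_p\lambda}$ injective, and then the relevant statement is about the two-dimensional image $W_p=\mathrm{im}\,d_p\Phi$ being the tangent plane of every $P_\lambda$ at $q$ — forcing $P_\lambda=P_p$ constant — which is exactly the clean case above.
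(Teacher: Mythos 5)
Your key local observation is sound and coincides with the paper's: at a point $p\in J_\Phi$ of rank one, the image of $d_p\Phi$ is a single line in $T_{\Phi(p)}\P^3$, so for a line $\lambda\ni p$ on which $d_p\Phi|_{T_p\lambda}$ does not vanish, the curves $\Phi(\lambda)$ and $\Phi(\gamma)$ are tangent at $\Phi(p)$, hence the plane $P_\lambda$ contains the tangent line of $\Phi(\gamma)$ there. But the way you try to close the argument has two genuine problems. First, your case dichotomy is misconceived: on $J_\Phi$ the differential is by definition degenerate, i.e.\ of rank at most one, so your ``clean case'' $\dim W_p=2$ (which is where you produce the contradiction with non-triviality) never occurs along $\gamma$; the entire weight of the proof falls on the rank-$\le 1$ case. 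Second, in that case your concluding inference is a non sequitur: from the fact that every plane $P_\lambda$ with $\lambda\ni p$ contains the tangent line $\ell_q$ of $\Phi(\gamma)$ at $q=\Phi(p)$, you conclude that ``$\Phi(\gamma)$ is contained in the intersection of all these planes, which must then be a fixed plane.'' Tangency of $P_\lambda$ to $\Phi(\gamma)$ at one point does not place the curve $\Phi(\gamma)$ inside $P_\lambda$, and the intersection of the pencil of planes through $\ell_q$ is just the line $\ell_q$ (and over all $p$ the intersection is typically empty), so no fixed plane containing $\Phi(\gamma)$ is produced.

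The missing idea is to use a \emph{secant} of the component $K$ rather than the pencil of lines through a single point: for a generic pair $p,q\in K$, the line $\lambda$ through $p$ and $q$ gives one plane $P_\lambda$ that is tangent to $\Phi(K)$ at \emph{two} points $\Phi(p)$ and $\Phi(q)$, so the two tangent lines of $\Phi(K)$ at these points lie in $P_\lambda$ and hence intersect. Since this holds for a Zariski dense set of pairs, any two tangent lines of $\Phi(K)$ meet, which classically forces either all tangent lines to lie in one plane (then $\Phi(K)$ lies in that plane) or all of them to pass through one point, in which case Lemma \ref{l:impos} shows $\Phi(K)$ is contained in a line, hence again planar; the case where $K$ itself lies in a line is handled directly by the planarization property (and note your non-planarity contradiction hypothesis quietly excludes it, so your pencil-through-$p$ setup cannot fall back on it either). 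Your genericity bookkeeping (critical points of $\Phi|_K$, special lines, vanishing of $d_p\Phi$ on $T_p\lambda$) is exactly what is needed, but it must be done for pairs $(p,q)$, and the finiteness of fibers (Lemma \ref{l:fibers}) is used only to rule out $\Phi(K)$ being a point, as you say.
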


To prove Proposition \ref{p:Jac}, we need the following simple and general lemma:

\begin{lem}
\label{l:impos}
  If a germ of a holomorphic curve $T\subset\P^3$ has the property that
  all tangent lines of $T$ pass through some point $b\in\P^3$, then
  in fact $T$ lies in a line passing through $b$.
\end{lem}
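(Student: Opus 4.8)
The plan is to reduce the hypothesis to a linear ordinary differential equation along the curve. Parametrize the germ $T$ by a nonconstant holomorphic map $\gamma$ from a disc $D\subset\C$ into $\P^3$ and choose a holomorphic lift $\widetilde\gamma\colon D\to\C^4\sm\{0\}$, shrinking $D$ if necessary. If $\widetilde\gamma$ and $\widetilde\gamma'$ were everywhere proportional, then $[\widetilde\gamma]$ would be constant and $T$ a point; so, after moving the base point of the germ and shrinking, I may assume that $\widetilde\gamma(t)\wedge\widetilde\gamma'(t)\ne 0$ and $\gamma(t)\ne b$ for every $t\in D$, the conclusion for the full germ then following by analytic continuation. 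Under these assumptions the tangent line to $T$ at $\gamma(t)$ is $\P(\spn\{\widetilde\gamma(t),\widetilde\gamma'(t)\})$, and, fixing a lift $\widetilde b\in\C^4\sm\{0\}$ of $b$, the hypothesis says precisely that $\widetilde b\in\spn\{\widetilde\gamma(t),\widetilde\gamma'(t)\}$ for all $t$. Since $\widetilde b$ and $\widetilde\gamma(t)$ are linearly independent, this forces $\spn\{\widetilde\gamma(t),\widetilde\gamma'(t)\}=\spn\{\widetilde b,\widetilde\gamma(t)\}$, hence
$$
\widetilde\gamma'(t)=g(t)\,\widetilde\gamma(t)+f(t)\,\widetilde b
$$
for uniquely determined holomorphic functions $f,g$ on $D$.

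Next I would solve this linear equation explicitly. Let $G$ be a primitive of $g$ on $D$ and put $\eta=e^{-G}\widetilde\gamma$; then $\eta'=e^{-G}f\,\widetilde b$, so $\eta(t)=\eta(0)+F(t)\,\widetilde b$ with $F$ a holomorphic scalar primitive of $e^{-G}f$. Therefore
$$
\widetilde\gamma(t)=e^{G(t)}\bigl(\widetilde\gamma(0)+F(t)\,\widetilde b\bigr)\in\spn\{\widetilde\gamma(0),\widetilde b\}\qquad\text{for all }t\in D,
$$
so $T$ is contained in the fixed projective line $\P(\spn\{\widetilde\gamma(0),\widetilde b\})$, which passes through $b$. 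Transporting this back to the original germ by analytic continuation completes the argument.

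The proof is essentially soft, so there is no serious obstacle; the only points requiring a little care are the degenerate case in which $T$ collapses to a point and the bookkeeping needed to arrange $\gamma(t)\ne b$, both handled above. One can also argue purely geometrically: choosing coordinates with $b=[1:0:0:0]$ and letting $\pi\colon\P^3\dashrightarrow\P^2$ be the projection away from $b$, the hypothesis says that every tangent line of $T$ is collapsed by $\pi$, so the differential of $\pi\circ\gamma$ vanishes identically; hence $\pi\circ\gamma$ is constant, and $T$ lies in a fibre of $\pi$ together with $b$, i.e. in a line through $b$.
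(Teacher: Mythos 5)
Your proof is correct and, at bottom, the same as the paper's: the closing projection-from-$b$ remark is exactly the paper's argument (in coordinates with $b=[0:0:0:1]$, lines through $b$ form the vertical line field in an affine chart, and a curve tangent to it is a vertical line, i.e.\ $\pi\circ\gamma$ is constant for the projection $\pi$ away from $b$). Your main argument --- lifting to $\C^4$, writing the tangency condition as $\widetilde\gamma'=g\,\widetilde\gamma+f\,\widetilde b$ and integrating with the factor $e^{-G}$ --- is just a homogeneous-coordinates rendering of that same integration step, with the degenerate cases (constant germ, passage through $b$) correctly handled by shrinking and analytic continuation.
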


\begin{proof}
  We may choose homogeneous coordinates in $\P^3$ so that $b=[0:0:0:1]$.
  In the affine chart $(x_1,x_2,x_3)\mapsto [1:x_1:x_2:x_3]$, the lines
  passing through $b$ are tangent to the vertical line field.
  The only integral curves of the vertical line field are vertical lines.
\end{proof}

\begin{proof}[Proof of Proposition \ref{p:Jac}]
Let $K$ be a component of $J_\Phi$, whose dimension is one.
The set $\Phi(K)$ is not a point, by Lemma \ref{l:fibers}.
Since $K$ is not mapped to a point under $\Phi$, the
restriction of $\Phi$ to $K$ has only finitely many critical points.
Note that, if $K$ lies in a line, then the statement follows from
the definition of a planarization.
Thus, we may assume that $K$ is not a subset of a line.

There are proper Zariski closed subsets $\Zc_1$, $\Zc_2$ and $\Zc_3$
of $K\times K$ with the following properties:
\begin{enumerate}
\item if $p$ or $q$ is a critical point of the restriction of $\Phi$
to $K$, then $(p,q)\in\Zc_1$;
\item if $(p,q)\not\in\Zc_2$, then the line connecting $p$ and $q$ is non-special;
\item let $\lambda$ be the line through $p$ and $q$; if the restriction
of the differential $d_p\Phi$ to the tangent line of $\lambda$ at $p$ vanishes
or the restriction of the differential $d_q\Phi$ to the tangent line of $\lambda$
at $q$ vanishes, then $(p,q)\in\Zc_3$.
\end{enumerate}

We now assume that $(p,q)$ does not belong to $\Zc_1\cup \Zc_2\cup \Zc_3$.
Since $p$, $q\in J_\Phi$, the curve $\Phi(K)$ is tangent to
$\Phi(\lambda)$ at points $\Phi(p)$ and $\Phi(q)$.
Moreover, since $(p,q)\not\in\Zc_1\cup\Zc_3$, the points $\Phi(p)$ and
$\Phi(q)$ are nonsingular for $\Phi(K)$ and $\Phi(\lambda)$, so that these
two varieties have well-defined tangent lines at $\Phi(p)$ and $\Phi(q)$.

It follows that the curve $\Phi(K)$ is tangent to the plane $P_\lambda$
at $\Phi(p)$ and $\Phi(q)$.
Tangent lines of $\Phi(K)$ at $\Phi(p)$ and $\Phi(q)$ lie in the
same plane $P_\lambda$, hence they intersect.
Since this is true for a Zariski dense set of pairs $(p,q)$, it follows
that every pair of tangent lines of $\Phi(K)$ intersect.
Fix two tangent lines $\Lambda_1$ and $\Lambda_2$ of $\Phi(K)$.
Any other tangent line $\Lambda$ of $\Phi(K)$ must intersect
both $\Lambda_1$ and $\Lambda_2$.
Thus, either $\Lambda$ lies in the plane containing $\Lambda_1\cup\Lambda_2$,
or $\Lambda$ passes through the intersection point $\Lambda_1\cap\Lambda_2$.
We see that the only possibilities for $\Phi(K)$ are that
\begin{enumerate}
\item all tangent lines
of this curve lie in the same plane, OR
\item all tangent lines of this curve
pass through the same point.
\end{enumerate}
In case (2), we have that $\Phi(K)$ is a subset of some line
by Lemma \ref{l:impos}, therefore, $\Phi(K)$ is a subset of a plane.
In case (1), we have that all tangent lines of $\Phi(K)$ belong
to the same plane, therefore, the curve $\Phi(K)$ itself lies in this
plane.
\end{proof}

\begin{prop}
  \label{p:Jac-fin}
  Suppose that $\Phi$ is strictly cubic, not trivial, not co-trivial, and not dual quadratic.
  Suppose also that the topological degree of $\Phi$ is equal to one.
  Then the set $J_\Phi$ is finite, possibly empty.
\end{prop}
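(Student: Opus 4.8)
The plan is to argue by contradiction: assume $\dim(J_\Phi)=1$ and derive that $\Phi$ must be dual quadratic (or trivial/co-trivial). By Proposition \ref{p:Jac}, every one-dimensional component $K$ of $J_\Phi$ is mapped into a plane; write $\Pi_K$ for this plane. The first step is to understand the preimage $\Phi^{-1}(\Pi_K)$. Since $\Phi$ is a cubic rational map, $\Phi^{-1}(\Pi_K)$ lies in a cubic curve $\vk_K \in \Lc_\Phi$. Now $K$ is a component of this cubic, so the cubic $\vk_K$ splits: $\vk_K = K \cup R_K$ with $\deg K + \deg R_K = 3$ (reducing modulo the base locus). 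The key observation to extract is that the geometry is rigid: because $\Phi$ has topological degree one, $\Phi$ is generically injective, and yet $J_\Phi$ has dimension one — so $\Phi$ ``folds'' along $K$. In topological degree one this forces $K$ to be a curve along which the image surface $S_\Phi$ is singular (a fold or pinch locus), which severely restricts the degree of $S_\Phi$ and the position of $K$.

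The second step is to play this against Proposition \ref{p:algdeg}: with $k=1$, the surface $\ol{S_\Phi}$ has degree $9 - |B_\Phi|$, so $|B_\Phi| \in \{1,\dots,8\}$ and $\deg \ol{S_\Phi} \in \{1,\dots,8\}$. A cubic planarization that is not trivial maps a generic line $\lambda$ to a cubic curve $\Phi(\lambda)$ (Lemma \ref{l:gen-cub}) lying in the plane $P_\lambda$; this plane section of $S_\Phi$ therefore contains a plane cubic, which constrains $\deg \ol{S_\Phi}$ from below in terms of how $P_\lambda$ meets $S_\Phi$. Combining the upper bound from Proposition \ref{p:algdeg} with the lower bound coming from the planar cubics $\Phi(\lambda)$ pins down $|B_\Phi|$ and $\deg \ol{S_\Phi}$ to a short list. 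For each case on this list, one analyzes where a one-dimensional fold curve $K$ could sit. The expectation is that the only configuration surviving is the one realized by dual quadratic maps — where $S_\Phi$ is a surface swept out by conics and $J_\Phi$ contains the curve along which these conics are tangent to a fixed quadric cone — and in every other configuration either $\dim J_\Phi = 0$ already, or $\Phi$ collapses to being trivial or co-trivial (the latter detected via Lemma \ref{l:fibers} and the behavior of the planes $P_\lambda$, as in the proof of Proposition \ref{p:Jac}).

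The third step handles the remaining subtlety that there could be several one-dimensional components of $J_\Phi$. Each component $K_i$ gives a plane $\Pi_i$ and a splitting of a cubic in $\Lc_\Phi$; since $\Lc_\Phi$ is only three-dimensional and the base locus is fixed, there is not enough room for many such splitting cubics unless the $K_i$ are arranged very specially (e.g., all lines, or all passing through common base points). One shows that any such special arrangement again forces dual quadratic, trivial, or co-trivial behavior.

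The main obstacle I anticipate is the case analysis in the second step: ruling out the ``middle'' values of $\deg \ol{S_\Phi}$ (say $3$, $4$, $5$) cleanly. The danger is a surface of low degree carrying both a web of plane cubics and a genuine one-dimensional fold curve that is \emph{not} of dual-quadratic type; dispatching these requires either a careful intersection-theoretic count on the blow-up $X \to \P^2$ resolving $\Phi$ (tracking how the exceptional curves and the strict transform of $J_\Phi$ map to $S_\Phi$), or an appeal to the classification of such low-degree surfaces. I would first try the intersection-theoretic route on $X$, using that $\Psi: X \to \P^3$ is a morphism of degree one onto $\ol{S_\Phi}$, so $\Psi$ is birational and the ramification divisor of $\Psi$ is effective and computable from $K_X$; the condition $\dim J_\Phi = 1$ says this ramification divisor has a non-exceptional component, and chasing its class should yield the contradiction outside the dual-quadratic locus.
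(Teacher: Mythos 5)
There is a genuine gap: what you have written is a strategy outline whose decisive steps are left as ``expectations,'' and the idea that actually makes the proposition work is absent. Since $K\subset J_\Phi$, the proof of Proposition \ref{p:Jac} gives more than ``$\Phi(K)$ lies in a plane'': for a generic line $\lambda$, the curve $\Phi(\lambda)$ is \emph{tangent} to $\Phi(K)$ at the image of each point of $\lambda\cap K$ (at such a point the differential of $\Phi$ has a kernel, so the images of $\lambda$ and $K$ share their tangent line there). This tangency is what constrains the planes $P_\lambda$ and yields the trichotomy on which the argument runs: if $K$ is not contained in a line and $\Phi(K)$ spans a plane $P$, then tangency at two points with distinct tangent lines forces $P_\lambda=P$ for generic $\lambda$, i.e.\ $\Phi$ is trivial; if $\Phi(K)$ is contained in a line $L$, then $P_\lambda\supset L$ for generic $\lambda$, i.e.\ $\Phi$ is co-trivial; and if $K$ lies in a line while $\Phi(K)$ spans a plane curve $\Xi$ of degree $2$ or $3$, then the generic $P_\lambda$ is tangent to $\Xi$, so the image of the dual planarization $\Phi^*$ lies in the cone $\Xi^*\subset\P^{3*}$ of tangent planes of $\Xi$; since $\Phi$ is not dual quadratic, $\Phi^*$ is a (strictly) cubic planarization, hence the dual curve of $\Xi$, and with it $\Xi^*$, has degree at most $3$, and Lemma \ref{l:Sdeg3} then forces trivial or co-trivial. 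Each branch contradicts the hypotheses. Nothing in your outline exploits the tangency of $\Phi(\lambda)$ to $\Phi(K)$, which is the only place where the defining property of $J_\Phi$ really enters.

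The route you propose instead does not close as stated. With $k=1$, Proposition \ref{p:algdeg} gives $\deg\ol S_\Phi=9-|B_\Phi|$, so the degree a priori ranges over $\{1,\dots,9\}$ (you drop $9$ without justification), and the plane cubics $\Phi(\lambda)$ sitting inside plane sections only give the lower bound $\deg\ol S_\Phi\ge 3$; nothing is ``pinned down to a short list.'' The assertion that topological degree one together with $\dim J_\Phi=1$ forces $S_\Phi$ to be singular along $\Phi(K)$ and thereby ``severely restricts'' its degree is a heuristic with no argument behind it, and the middle cases are, by your own admission, deferred to an unspecified ramification-divisor computation on a resolution --- which is precisely the content that would have to be supplied. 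Finally, the configuration you expect to survive cannot occur: a dual quadratic planarization has topological degree greater than one (this is established at the beginning of the proof of Theorem \ref{t:deg}), so under the degree-one hypothesis there is no dual-quadratic fold locus to isolate; the hypothesis ``not dual quadratic'' is needed only to guarantee that $\Phi^*$ is cubic in the third case above. As it stands, the proposal is a plan, not a proof.
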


To prove this proposition, we need the following lemma:

\begin{lem}
\label{l:Sdeg3}
  Suppose that $\Phi$ is strictly cubic, the topological degree of $\Phi$ is one,
  and $S_\Phi$ is a two-dimensional subset of some surface of degree 3.
  Then $\Phi$ is trivial or co-trivial.
\end{lem}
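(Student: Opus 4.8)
The plan is to prove Lemma \ref{l:Sdeg3} by a degree count combined with the structure of the linear web $\Lc_\Phi$. By Proposition \ref{p:algdeg}, if the topological degree $k$ equals one and $\deg(\ol S_\Phi)=3$, then $9-|B_\Phi|=3$, so $|B_\Phi|=6$ (counting multiplicities). Since $\Phi$ is strictly cubic, $B_\Phi$ is finite, and a generic line $\lambda\subset\P^2$ is non-special with $\Phi(\lambda)$ a cubic curve by Lemma \ref{l:gen-cub}. For each such $\lambda$ we have the associated cubic $\vk_\lambda=\lambda\cup\si_\lambda\in\Lc_\Phi$, where $\si_\lambda$ is a conic passing through the residual base points. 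The first task is to pin down how the six base points distribute between $\lambda$ and $\si_\lambda$: since $\lambda$ is a line meeting $B_\Phi$ in a bounded number of points, most of the base points must lie on the conic $\si_\lambda$, and I would count them using Bezout against a second generic line.

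**Identifying the web.** The key step is to show that under these hypotheses $\Lc_\Phi$ must be (after removing the base locus) a web whose members all share a common component or pass through a common point, forcing triviality or co-triviality. Concretely: a cubic surface of degree $3$ that is the image of $\P^2$ under a birational (topological degree one) map is a rational cubic surface, but the generic such map has $|B_\Phi|=6$ corresponding to the blow-up of $\P^2$ at six points — this is the anticanonical embedding, i.e. $\Phi$ would be the standard cubic surface map given by the web of cubics through six general points. The point is that for such a map, the image of a generic line is an elliptic curve, not a rational cubic; but $\Phi(\lambda)$ is a rational curve (image of $\P^1$). Therefore the six base points cannot be in general position: they must be special, and I would run through the degenerations. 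If the six points (with multiplicity) force every member of $\Lc_\Phi$ to be reducible with a common line component, then that common line is contracted or the map factors, and combined with Proposition \ref{p:curveB} and the planarization hypothesis one deduces that all planes $P_\lambda$ share a common point or line — co-triviality or triviality.

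**The alternative: using the conic curves $\si_\lambda$.** A cleaner route, which I would try first, is to exploit that $\Phi(\si_\lambda)$ and $\Phi(\lambda)$ together span the plane $P_\lambda$, and $\Phi(\lambda)$ is a plane cubic lying on the cubic surface $\ol S_\Phi$. A plane section of a cubic surface is a plane cubic curve of degree $3$, and it coincides with $\Phi(\lambda)\cup\Phi(\si_\lambda)$ only if $\Phi(\si_\lambda)$ is empty or a subset of $\Phi(\lambda)$ — but $\deg\Phi(\lambda)=3$ already exhausts the plane section $P_\lambda\cap\ol S_\Phi$. This means $P_\lambda\cap\ol S_\Phi=\Phi(\lambda)$ as a divisor, so the conic $\si_\lambda$ is entirely contracted by $\Phi$ or mapped into $\Phi(\lambda)$. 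Since topological degree is one, $\Phi$ is generically injective, so $\si_\lambda$ cannot map finitely onto a subcurve of the irreducible cubic $\Phi(\lambda)$ without contradicting injectivity (two disjoint curves $\lambda$, $\si_\lambda$ mapping to the same cubic). Hence $\Phi(\si_\lambda)$ is a point for every generic $\lambda$ — but then by Lemma \ref{l:fibers} (the fiber over that point contains the one-dimensional $\si_\lambda$), $\Phi$ is co-trivial, unless $\si_\lambda$ degenerates; tracking this degenerate case is where the real work lies.

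**Main obstacle.** The hard part will be the bookkeeping of degenerate configurations: ruling out, or correctly classifying, the cases where $\si_\lambda$ becomes reducible (a pair of lines), where $\Phi(\lambda)$ is a singular cubic (nodal or cuspidal), and where the six base points have multiplicities $>1$ or lie on special curves. In each of these sub-cases I must verify that the conclusion (trivial or co-trivial) still holds, typically by showing that a fixed component of all $\vk_\lambda$ produces a fixed point common to all planes $P_\lambda$, or that $\Phi$ contracts a pencil of curves through a point. I expect this case analysis to occupy most of the proof, with the generic-position case dispatched quickly by the plane-section degree count above.
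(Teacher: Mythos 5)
Your operative argument (the third paragraph) is correct in outline and genuinely different from the paper's. The paper works entirely in the source plane: from Proposition \ref{p:algdeg} it gets $|B_\Phi|=6$, shows that $(\si_\lambda\cdot\vk)_{B_\Phi}\ge 6$ for every $\vk\in\Lc_\Phi$ and every line $\lambda$ disjoint from $B_\Phi$, hence $(\si_\lambda\cdot\si_{\lambda'})_{B_\Phi}\ge 6>4$ for two such lines; by Bezout the residual conics must share a line component, so all $\si_\lambda$ pass through a common point $a\notin B_\Phi$, and every plane $P_\lambda$ contains $\Phi(a)$ --- co-triviality, with the center exhibited explicitly. You instead work in the target: since $\Phi(\lambda)$ is a plane cubic on the degree-three surface, it exhausts the plane section $P_\lambda\cap\ol S_\Phi$, so each component of $\si_\lambda$ is either contracted (then Lemma \ref{l:fibers} gives co-triviality at once) or dominates the irreducible cubic $\ol{\Phi(\lambda)}$, which contradicts topological degree one. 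Your route does not even need the count $|B_\Phi|=6$ or the conic Bezout bound; the paper's route never uses the geometry of $\ol S_\Phi$ beyond its degree and produces the co-trivial center directly.

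Two caveats. First, the injectivity step is stated too loosely: topological degree one constrains \emph{generic} points of $S_\Phi$, and a single curve $\ol{\Phi(\lambda)}$ could a priori lie in the exceptional locus where fibers are larger; you must let $\lambda$ vary, noting that all but finitely many points of each $\ol{\Phi(\lambda)}$ acquire one preimage on $\lambda$ and one on $\si_\lambda$ (these curves meet in finitely many points), and that as $\lambda$ ranges over a two-parameter family these points sweep out a dense constructible subset of $S_\Phi$ --- only then is degree one contradicted. Second, your second paragraph is wrong where it matters: the image of a line is always a rational curve, never elliptic; for six general base points the actual obstruction is that a generic line maps to a \emph{non-planar} (twisted) cubic, contradicting the planarization hypothesis. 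Fortunately that paragraph is dispensable, and the ``degenerate cases'' you expect to be the real work are immediate: reducibility of $\si_\lambda$, singularities of $\Phi(\lambda)$, and base-point multiplicities play no role once the dichotomy ``contracted component versus dominating component'' is in place.
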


\begin{proof}
  By Proposition \ref{p:algdeg}, we have $|B_\Phi|=6$.
  For every line $\lambda\subset\P^2$ disjoint from $B_\Phi$, consider the
  corresponding conic $\si_\lambda$.
  Then $\si_\lambda$ contains the set $B_\Phi$.
  Moreover, we have $(\si_\lambda\cdot\varkappa)_{B_\Phi}\ge 6$, where $\varkappa$ is 
  any cubic from the linear web $\Lc_\Phi$, and $(\si_\lambda\cdot\varkappa)_{B_\Phi}$
  denotes the sum of the intersection multiplicities of $\si_\lambda$ and $\varkappa$
  at all points of $B_\Phi$.
  Indeed, if $\lambda\cap B_\Phi=\0$, then the inequality 
  $(\si_\lambda\cdot\varkappa)_{B_\Phi}\ge 6$ follows from
  $((\si_\lambda+\lambda)\cdot\varkappa)_{B_\Phi}\ge 6$.
  The general case follows from the upper-semicontinuity of the intersection multiplicities.

  If a line $\lambda'$ is disjoint from $B_\Phi$, then 
  $(\si_\lambda\cdot\si_{\lambda'})_{B_\Phi}\ge 6$.  
  On the other hand, two different conics either share a line component
  or intersect by at most 4 points, counting multiplicities.
  It follows that all conics $\si_\lambda$ share a line component $\lambda_0$,
  in particular, all $\si_\lambda$ have a point $a\notin B_\Phi$ in common.
  Then, for every line $\lambda\subset\P^2$, the plane $P_\lambda$ containing the 
  image of $\lambda$ contains also $\Phi(a)$,
  which means that $\Phi$ must be co-trivial.
\end{proof}

\begin{proof}[Proof of Proposition \ref{p:Jac-fin}]
  Assume the contrary: there is a component $K$ of $J_\Phi$ that has dimension one.
  By Proposition \ref{p:Jac}, the image $\Phi(K)$ is a plane curve.
  Suppose that neither $K$ not $\Phi(K)$ is a subset of a line.
  We will write $P$ for the plane containing $\Phi(K)$.
  By the proof of Proposition \ref{p:Jac}, the image of a generic line
  $\lambda\subset\P^2$ under the map $\Phi$ is tangent to
  $\Phi(K)$ at two or more points.
  Moreover, we may assume that the tangent lines of $\Phi(K)$ at
  these points do not coincide and therefore define a unique plane.
  It follows that $P_\lambda=P$.
  Since $\lambda$ is generic, this implies that $\Phi$ is trivial.

  Suppose now that $\Phi(K)$ lies in a line $L\subset\P^3$.
  Then, since, for a generic line $\lambda\subset\P^2$, the curve
  $\Phi(\lambda)$ is tangent to $\Phi(K)$, the plane $P_\lambda$
  must contain $L$.
  It follows that $\Phi$ is co-trivial.

  Finally, suppose that $K$ is a subset of a line but $\Phi(K)$
  is not a subset of a line.
  Then $\Phi(K)$ lies in a plane algebraic curve $\Xi$ of degree two or three.
  Consider the dual planarization $\Phi^*$.
  If $\Phi$ is neither trivial nor co-trivial, then $\Phi^*$ is defined on
  some nonempty Zariski open subset of $\P^{2*}$.
  If $\Phi^*$ is trivial, then $\Phi$ must be co-trivial.
  If $\Phi^*$ is co-trivial, then $\Phi$ must be trivial.
  Thus we may assume that $\Phi^*$ is neither trivial nor co-trivial.
  Note that, for a generic line $\lambda\subset\P^2$, the image
  $\Phi(\lambda)$ is tangent to $\Phi(K)$ at the point $\Phi(\lambda\cap K)$.
  It follows that the plane $P_\lambda$ is tangent to $\Phi(K)$.
  We see that the image $\Phi^*(\P^{2*}\sm B_{\Phi^*})$ lies in the
  set of all tangent planes of $\Xi$.

  The set $\Xi^*$ of all tangent planes of $\Xi$ is a cone in $\P^{3*}$
  (indeed, every plane in $\Xi^*$ is an element of a linear pencil of
  planes, i.e., of a line in $\P^{3*}$, containing the plane of $\Xi$).
  A plane section of the cone $\Xi^*$ not passing through the vertex
  of this cone is a curve projectively equivalent to the dual curve of $\Xi$.
  Since $\Phi^*$ is a planarization, it must be a cubic rational map by \cite{Ti}.
  It follows that the projectively dual curve of $\Xi$ has degree at most three.
  Then the degree of the surface $\Xi^*$ is also at most three.
  If the image of $\P^{2*}\sm B_{\Phi^*}$ under $\Phi^*$ has dimension one,
  then this image lies in a plane, hence $\Phi^*$ is trivial.
  Thus we may assume that $\Phi^*(\P^{2*}\sm B_{\Phi^*})$ has dimension two,
  i.e., includes an open subset of $\Xi^*$.
  It now follows from Lemma \ref{l:Sdeg3} that the planarization $\Phi$
  is trivial or co-trivial.
\end{proof}

\subsection{Proof of Theorem \ref{t:deg}}
In this section, we assume that $\Phi$ satisfies all assumptions of Theorem \ref{t:deg},
namely, that $\Phi$ is not trivial, not co-trivial, and is strictly cubic.
It follows that $S_\Phi$ has dimension two and that the set $B_\Phi$ is finite.

First note that, if $\Phi$ is dual quadratic, then the conclusion of
Theorem \ref{t:deg} holds.
Indeed, let $\Phi^*$ be the dual planarization of $\Phi$.
It is defined on some nonempty Zariski open subset of $\P^{2*}$.
Since $\Phi$ is dual quadratic, the map $\Phi^*$ is a quadratic rational map.
Recall that lines in $\P^{2*}$ correspond to points in $\P^2$:
namely, a point $a\in\P^2$ defines the line $a^*\in\P^{2*}$
consisting of all lines in $\P^2$ passing through $a$.
If a line $a^*\subset\P^{2*}$ is non-special for $\Phi^*$, we will write
$P^*_a$ for the plane in $\P^3$ containing the set $\Phi^*(a^*\sm B_{\Phi^*})$.
The plane $P^*_a$, of course, identifies with the point $\Phi(a)$.
The plane $P^*_a$ in $\P^3$ defines a conic in $\P^2$ containing $a^*$.
This conic consists of $a^*$ and another line $a_1^*$.
Clearly, we have $a_1\ne a$ for a generic $a\in\P^2$ (there is no nontrivial
\emph{linear} systems of double lines in $\P^2$) and that $\Phi(a_1)=\Phi(a)$.
It follows that the topological degree of $\Phi$ is bigger than one.
We may now assume that $\Phi$ is not dual quadratic.

\begin{prop}
  \label{p:cusp}
  It is impossible that, for a Zariski dense set of lines $\lambda\subset\P^2$,
  the images $\Phi(\lambda)$ are cuspidal cubics.
\end{prop}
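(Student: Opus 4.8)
The plan is to argue by contradiction, exploiting the ramification of the normalization map of a cuspidal cubic. Suppose that $C_\lambda:=\Phi(\lambda)$ is a cuspidal cubic for all $\lambda$ in a Zariski dense set $\Dc$ of lines. After deleting a proper Zariski closed subset of $\Dc$ we may assume each $\lambda\in\Dc$ is disjoint from the finite set $B_\Phi$, so that $\Phi|_\lambda$ is a morphism $\lambda\cong\P^1\to\P^3$ given by four cubic forms with no common factor; a generic hyperplane meets $C_\lambda$ in three points and pulls back to three points on $\lambda$, so $\Phi|_\lambda:\lambda\to C_\lambda$ is birational. Thus $\Phi|_\lambda$ is the normalization of $C_\lambda$; since the cusp $c_\lambda$ is unibranch it has a single preimage $p_\lambda\in\lambda$, and the differential of $\Phi|_\lambda$ vanishes at $p_\lambda$ (otherwise the image would be smooth there). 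Because $d_{p_\lambda}\Phi$ vanishes on the tangent line of $\lambda$ at $p_\lambda$, the rank of $d_{p_\lambda}\Phi$ is at most one, i.e. $p_\lambda\in J_\Phi$.

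Next I would locate $p_\lambda$ more precisely. The locus $Z\subset J_\Phi$ where $d\Phi$ vanishes identically must be finite: a curve contained in $Z$ would be contracted by $\Phi$ to a point, contradicting Lemma \ref{l:fibers}. Recall also that $\dim J_\Phi\le 1$, so $J_\Phi$ has finitely many isolated points and finitely many one-dimensional components. For $p_0$ in the finite set $Z$ or among the isolated points of $J_\Phi$, every $\lambda$ with $p_\lambda=p_0$ passes through $p_0$, hence lies on the line $p_0^*\subset\P^{2*}$; these $\lambda$ therefore fill only finitely many lines of $\P^{2*}$, which we delete from $\Dc$. By the pigeonhole principle some one-dimensional component $K$ of $J_\Phi$ satisfies $p_\lambda\in K$ for a Zariski dense subset of $\Dc$, and, deleting one more proper closed subset, we may assume $d\Phi$ has rank exactly one at $p_\lambda$. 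Writing $\ell_p:=\ker d_p\Phi\subset T_p\P^2$ for such $p$ (a line depending rationally on $p\in K$), the vanishing of $d_{p_\lambda}\Phi$ on $T_{p_\lambda}\lambda$ together with $\dim\ell_{p_\lambda}=1$ forces $T_{p_\lambda}\lambda=\ell_{p_\lambda}$. Hence $\lambda$ equals the unique line $L(p_\lambda)$ through $p_\lambda\in K$ tangent there to $\ell_{p_\lambda}$.

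Therefore every $\lambda$ in the still Zariski dense set $\Dc$ belongs to the image of the rational map $K\dashrightarrow\P^{2*}$, $p\mapsto L(p)$, which has dimension at most one; so $\Dc$ is contained in a proper Zariski closed subset of $\P^{2*}$, contradicting Zariski density. This proves the proposition. The crux is the passage from ``$p_\lambda$ is the ramification point of the normalization'' to ``$p_\lambda$ determines $\lambda$'': it requires ruling out rank-zero points of $d\Phi$ along curves (via finiteness of fibers), so that a genuine kernel direction $\ell_{p_\lambda}$ is available, and it uses that the resulting family $\{L(p):p\in K\}$ is only one-parameter.
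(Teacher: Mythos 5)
Your proof is correct, but it takes a genuinely different route from the paper's. Both arguments share the opening step: the unique preimage $p_\lambda$ of the cusp is a critical point of $\Phi|_\lambda$, hence $p_\lambda\in J_\Phi$. The paper then simply observes that, as $\lambda$ ranges over a Zariski dense family of lines, the points $p_\lambda$ sweep out a subset $\gamma\subset J_\Phi$ that must be one-dimensional (a finite set cannot meet a dense family of lines), and derives a contradiction with Proposition \ref{p:Jac-fin}, which asserts that $J_\Phi$ is finite; this makes the paper's proof very short, but it is conditional on the hypotheses of that proposition (topological degree one, $\Phi$ not dual quadratic), which is why the proposition is later invoked in the weakened form ``trivial, co-trivial or dual quadratic''. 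You avoid Proposition \ref{p:Jac-fin} entirely: after noting that the rank-zero locus of $d\Phi$ is finite (a curve there would be contracted, contradicting Lemma \ref{l:fibers}) and that $\dim J_\Phi\le 1$, you recover the line from its cusp preimage, since at a rank-one point of $J_\Phi$ the tangent direction of $\lambda$ at $p_\lambda$ must equal the kernel line of $d_{p_\lambda}\Phi$; hence the cuspidal lines are confined to finitely many pencils together with the image of a map defined on a curve, an at most one-dimensional subset of $\P^{2*}$, contradicting Zariski density directly. What each approach buys: the paper's reduction is essentially two lines once Proposition \ref{p:Jac-fin} is in place, whereas your argument is self-contained (only Lemma \ref{l:fibers}, finiteness of $B_\Phi$, and the Sard bound $\dim J_\Phi\le 1$ are used) and establishes the proposition as literally stated, free of the topological-degree-one and non-dual-quadratic assumptions; your preliminary verification that $\Phi|_\lambda$ is birational onto a cubic image, and hence the normalization of the cuspidal cubic, is also a worthwhile explicit justification of the step that the paper leaves implicit.
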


\begin{proof}
We will write $\gamma$ for the Zariski closure of the set of all
points $p$ with the following property: there is a line $\lambda\ni p$, whose
$\Phi$-image is a cuspidal cubic curve, the point $\Phi(p)$ being the cusp of this curve.
If $p$ and $\lambda$ are as above, then $p\in J_\Phi$.
It follows that $\gamma\subset J_\Phi$ has dimension at most one.
Since a generic line intersects $\gamma$, we conclude that $\gamma$ is a curve
rather than a finite set of points.
Assuming that $\Phi$ is not trivial, not co-trivial,
not dual quadratic, and is of topological degree one
we get a contradiction with Proposition \ref{p:Jac-fin}, which states that
$J_\Phi$ and hence $\gamma$ are finite sets.
\end{proof}

By Lemma \ref{l:gen-cub}, for a generic line $\lambda\subset\P^2$, the set 
$\Phi(\lambda)$ is a cubic curve.
A plane rational cubic curve is either nodal or cuspidal.
Thus we have two cases.
Suppose first that, for a generic line $\lambda$, the cubic $\Phi(\lambda)$ is cuspidal.
Then, by Proposition \ref{p:cusp}, the planarization $\Phi$ is trivial, or co-trivial,
or dual quadratic.
Thus we may assume that, for a non-empty Zariski open set of lines $\lambda$, the
cubic $\Phi(\lambda)$ is nodal.
For every line $\lambda\subset\P^2$ such that $\Phi(\lambda)$ is a nodal cubic, we
let $\Sigma(\lambda)$ be the set of points of $\lambda$ mapping to the singular point of $\Phi(\lambda)$.
Thus the set $\Sigma(\lambda)$ consists of two points, and these
two points are mapped to the node of the nodal cubic $\Phi(\lambda)$.
Let $\Gamma$ be the Zariski closure of the union of $\Sigma(\lambda)$ over all lines $\lambda\subset\P^2$ such that $\Phi(\lambda)$ is a nodal cubic.

\begin{lem}
  \label{l:GammaP2}
  The set $\Gamma$ coincides with the whole of $\P^2$.
\end{lem}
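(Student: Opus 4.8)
The plan is to argue by contradiction: suppose that $\Gamma$ is a proper Zariski closed subset of $\P^2$, so that $\dim\Gamma\le 1$. The key observation is that $\Gamma\subset J_\Phi$, since at each point $p\in\Sigma(\lambda)$ the restriction of $\Phi$ to $\lambda$ is forced to be non-immersive (two points of $\lambda$ collide in the image, at the node), so $d_p\Phi$ kills the tangent direction of $\lambda$ at $p$; alternatively one argues via branches through the node. So $\dim\Gamma\le 1$, and since a generic line $\lambda$ meets $\Gamma$ (we would first note that for generic $\lambda$ the two points of $\Sigma(\lambda)$ are honest points of $\Gamma$), the set $\Gamma$ must be a curve, not a finite set.

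Next I would invoke the structure we have already established for $J_\Phi$. Under the standing assumptions of this subsection ($\Phi$ strictly cubic, not trivial, not co-trivial) together with the running dichotomy — having eliminated the dual quadratic case at the start of the subsection and the generically cuspidal case via Proposition \ref{p:cusp} — we are in the situation where we may additionally assume $\Phi$ has topological degree one (that is exactly what Theorem \ref{t:deg} asks us to contradict). Then Proposition \ref{p:Jac-fin} applies and tells us that $J_\Phi$ is a finite set. But $\Gamma\subset J_\Phi$ is a curve, a contradiction. Hence no such proper closed $\Gamma$ exists, i.e. $\Gamma=\P^2$.

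The one point requiring care — and the main obstacle — is justifying rigorously that $\Gamma\subset J_\Phi$, i.e. that the two points over the node of a nodal cubic $\Phi(\lambda)$ are genuinely critical points of $\Phi$ (not merely of $\Phi|_\lambda$, which by itself only says the tangent to $\lambda$ is in the kernel of $d_p\Phi$ — but that already suffices to put $p$ in $J_\Phi$, since it means $d_p\Phi$ has nontrivial kernel). So in fact the inclusion $\Gamma\subset J_\Phi$ is immediate from the definition of $J_\Phi$ as the degeneracy locus of $d_p\Phi$: if $\Phi(\lambda)$ has a node at $\Phi(p)$ with $p\in\Sigma(\lambda)$, then either two distinct points of $\lambda$ map to $\Phi(p)$, in which case a local computation shows the parametrization $\Phi|_\lambda$ has nonzero derivative there but with a second branch, and we instead use a generic nearby line to see the collision forces $d_p\Phi$ to degenerate; more simply, for a nodal cubic the two preimages of the node under any parametrization are smooth points of $\lambda$ and $\Phi|_\lambda$ is an immersion at them, so one must instead argue that the node of $\Phi(\lambda)$ varies in a curve and compare tangent directions. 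The cleanest route is: the two branches of $\Phi(\lambda)$ at the node have distinct tangent lines, both lying in $P_\lambda$; letting $\lambda$ vary and using that $\Gamma$ is (by assumption) at most a curve, one runs the tangency argument of Proposition \ref{p:Jac} verbatim to conclude $\Phi(\Gamma)$ lies in a plane or $\Gamma$ behaves like a component of $J_\Phi$ — and in all cases Proposition \ref{p:Jac-fin} delivers finiteness, the desired contradiction. I would write the proof in the first, short form, citing Proposition \ref{p:Jac-fin} as the engine.
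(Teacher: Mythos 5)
Your central claim --- that $\Gamma\subset J_\Phi$ --- is false, and you in fact half-notice this midway through: for a nodal image $\Phi(\lambda)$, the two points of $\Sigma(\lambda)$ are distinct points of $\lambda$ at which $\Phi|_\lambda$ is an immersion (the restriction $\Phi|_\lambda$ is birational onto the cubic $\Phi(\lambda)$, and a node is a transverse self-intersection of two branches, not a cusp), so nothing forces $d_p\Phi$ to have a nontrivial kernel there. Degeneracy of the differential at the preimage of the singular point is precisely the \emph{cuspidal} situation, which was already handled in Proposition \ref{p:cusp}; you are conflating the two cases. A quick sanity check shows the claim cannot be repaired: the lemma asserts $\Gamma=\P^2$, so if $\Gamma\subset J_\Phi$ held, then $J_\Phi$ would be all of $\P^2$, contradicting the Sard-type observation made before Proposition \ref{p:Jac} that $J_\Phi$ has dimension at most one because $\dim S_\Phi=2$. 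The patch you sketch (``run the tangency argument of Proposition \ref{p:Jac} verbatim'', ``the node varies in a curve and compare tangent directions'') is not an argument: the tangency in Proposition \ref{p:Jac} comes exactly from the hypothesis that the relevant points lie in $J_\Phi$, which is what is missing here, and there is no tangency relation between $\Phi(\Gamma)$ and $\Phi(\lambda)$ at the node to exploit. So citing Proposition \ref{p:Jac-fin} as ``the engine'' does not get off the ground.

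The paper's proof is different and elementary, and never goes through $J_\Phi$. Assume $\dim\Gamma\le 1$. By Lemma \ref{l:fibers} (non-co-triviality gives finite fibers), for each $a\in\Gamma$ there are only finitely many lines joining $a$ to another point of $\Phi^{-1}(\Phi(a))$; hence the set $\Zc$ of lines $\lambda$ on which $\Phi$ restricted to $\lambda\cap\Gamma$ fails to be injective is at most one-dimensional in $\P^{2*}$. Choosing a line $\lambda\notin\Zc$ whose image is a nodal cubic, the two distinct points of $\Sigma(\lambda)$ lie in $\lambda\cap\Gamma$ and map to the same point (the node), contradicting injectivity. This counting argument, resting on Lemma \ref{l:fibers}, is the missing ingredient in your proposal.
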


\begin{proof}
Assume the contrary: the set $\Gamma$ has dimension one or less.
Then the set $\Zc$ of lines $\lambda$ such that $\Phi:\lambda\cap\Gamma\to\P^3$
is not injective has dimension at most one.
Indeed, given a point $a\in\Gamma$, there are only finitely many lines
connecting $a$ with some other point in the finite set $\Phi^{-1}(\Phi(a))$
(the latter set is finite by Lemma \ref{l:fibers}).
Consider a line $\lambda\not\in\Zc$.
Moreover, we may assume that $\Phi(\lambda)$ is a nodal cubic.
Then the set $\Sigma(\lambda)$ contains some point $b\ne a$.
This is a contradiction with the fact that $\Phi$ is injective on the
set $\lambda\cap\Gamma$.
\end{proof}

It follows from Lemma \ref{l:GammaP2} that the topological degree of the map $\Phi$
is strictly bigger than one, thus Theorem \ref{t:deg} is proved.

\section{Description of cubic planarizations}
\label{s:desc}
In this section, we give a complete description of cubic planarizations thus
completing the description of all planarizations.
We will assume throughout this section that $\Phi$ is a strictly cubic planarization
that is neither trivial nor co-trivial.
Then $S_\Phi$ has dimension two, and the set $B_\Phi$ is finite.
By Theorem \ref{t:deg}, the topological degree of the map
$\Phi:\P^2\sm B_\Phi\to S_\Phi$ is at least two.
We can now make this result stronger.

\begin{prop}
   If $\Phi$ is not dual quadratic, then
   the topological degree of the map $\Phi:\P^2\sm B_\Phi\to S_\Phi$ is
   equal to three.
\end{prop}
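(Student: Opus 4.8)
The plan is to combine the divisor-counting identity from Proposition \ref{p:algdeg} with structural constraints coming from the planarization hypothesis. By Proposition \ref{p:algdeg}, the topological degree $k$ satisfies $k \mid 9 - |B_\Phi|$ and the image surface $\ol S_\Phi$ has degree $d=(9-|B_\Phi|)/k$. By Theorem \ref{t:deg} we already know $k \geq 2$, so the only possibilities are $k=2$ (with $9-|B_\Phi| \in \{2,4,6,8\}$, i.e. $|B_\Phi|\in\{1,3,5,7\}$) and $k=3$ (with $|B_\Phi|\in\{0,3,6\}$); higher $k$ would force $|B_\Phi| < 0$ or $d$ noninteger, except $k=9,|B_\Phi|=0,d=1$ which makes $S_\Phi$ planar and $\Phi$ trivial, a contradiction. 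So the real work is to rule out $k=2$.

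First I would analyze the case $k=2$. Here $\ol S_\Phi$ has degree $d = (9-|B_\Phi|)/2 \in \{1,2,3,4\}$. The degree-$1$ subcase makes $\Phi$ trivial, contradiction. The degree-$3$ subcase ($|B_\Phi|=3$) is handled directly by Lemma \ref{l:Sdeg3}: that lemma as stated assumes topological degree one, but its proof really only uses the count $|B_\Phi| = 6$ coming from Proposition \ref{p:algdeg}; for $k=2$ with $d=3$ we instead have $|B_\Phi|=3$, so I would need a parallel argument — the conics $\si_\lambda$ all pass through $B_\Phi$ (three points, counted with multiplicity) and satisfy $(\si_\lambda \cdot \si_{\lambda'})_{B_\Phi} \geq |B_\Phi|$ is too weak here, so this subcase needs a genuinely different idea, likely examining the generic fiber structure. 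The degree-$2$ and degree-$4$ subcases ($|B_\Phi| = 5, 1$) are the genuinely new cases.

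The key tool I would bring in for the remaining $k=2$ subcases is the nodal-cubic analysis from the proof of Theorem \ref{t:deg}. Recall from Lemma \ref{l:GammaP2} that $\Gamma = \P^2$, where $\Gamma$ is the closure of the union of the two-point sets $\Sigma(\lambda)$ (preimages of the node of the nodal cubic $\Phi(\lambda)$). A generic point $a \in \P^2$ lies in $\Sigma(\lambda)$ for some $\lambda$, hence has a "partner" $a'$ with $\Phi(a') = \Phi(a)$ and $a' \in \lambda$; if $k=2$ this partner is essentially unique and generically $a \mapsto a'$ is a birational involution $\iota$ of $\P^2$ whose fixed locus meets every line (since the node-preimages on a generic nodal cubic $\Phi(\lambda)$ are the only pair, the line $\lambda$ joining $a$ and $\iota(a)$ is forced). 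The plan is to show this involution, together with the webs $\Lc_\Phi$ and the planarization condition $P_\lambda \supset \Phi(\lambda)$, forces $\Phi$ to factor through $\P^2 / \iota$ in a way incompatible with being strictly cubic and not dual quadratic — in effect, $\Phi = \wt\Phi \circ q$ where $q:\P^2 \dashrightarrow \P^2$ is the quotient by $\iota$ (a degree-two rational map) and $\wt\Phi$ is birational onto $S_\Phi$; but then $S_\Phi$, being birational to $\P^2$ via $\wt\Phi$ and of degree $d \leq 4$ cut out by the image of a web, should be recognizable, and the line-to-plane-curve property should pin $\Phi$ down to a dual quadratic map or a map that is not strictly cubic.

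The hard part will be this last step: showing that topological degree exactly two is incompatible with "strictly cubic, not dual quadratic." The obstacle is that degree-two rational maps $\P^2 \dashrightarrow \P^2$ and their compositions with linear embeddings come in several \cite{CSS}-types, and one must check each produces either a quadratic map (not strictly cubic) or a dual quadratic map. I would organize this by: (i) pinning down the involution $\iota$ and its type via its fixed locus intersecting every line; (ii) noting $q = $ quotient map has its own web of conics $\mathcal{M}$, and $\Lc_\Phi = q^* (\text{linear system on } \P^2)$ has members that are $\iota$-invariant cubics — but an $\iota$-invariant cubic containing a generic line $\lambda$ also contains $\iota(\lambda)$, forcing $\vk_\lambda = \lambda \cup \iota(\lambda)$ when these are distinct, contradicting that $\si_\lambda$ is a single conic unless $\iota(\lambda)$ is itself a line, i.e. $\iota \in \PGL_3$ — but a linear involution cannot have topological degree behavior matching a strictly cubic map. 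This contradiction, carefully run through the three $d$-values, is what eliminates $k=2$ and leaves $k=3$.
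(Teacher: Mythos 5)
Your plan breaks down exactly at its crux, the elimination of $k=2$, and the way it fails is instructive. The dual quadratic planarizations (the normal forms $(C_1)$--$(C_6)$) are strictly cubic, neither trivial nor co-trivial, and have topological degree exactly two: their images are cubic surfaces, so by Proposition~\ref{p:algdeg} they sit precisely in your subcase $k=2$, $d=3$, $|B_\Phi|=3$ --- the one you leave open after correctly observing that Lemma~\ref{l:Sdeg3} gives nothing there. So that subcase is not peripheral; it is the entire content of the proposition, and it can only be settled by an argument that uses the hypothesis ``not dual quadratic'' in an essential way. Your involution argument never does, and its key step is in fact false: if $\iota$ is a quadratic Cremona involution, then for a generic line $\lambda$ the curve $\iota(\lambda)$ is a conic, and $\vk_\lambda=\lambda\cup\iota(\lambda)$ with $\si_\lambda=\iota(\lambda)$ is perfectly consistent --- nothing forces $\iota(\lambda)$ to be a line or $\iota\in\PGL_3$. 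This is exactly the geometry of $(C_1)$--$(C_6)$; if your dichotomy were valid it would prove $k=3$ for those maps too, which is false. (Two smaller inaccuracies: the arithmetic of Proposition~\ref{p:algdeg} alone does not exclude $k=4$, $d=2$, $|B_\Phi|=1$, so ``only $k=2$ or $3$'' needs more than divisibility; and Lemma~\ref{l:GammaP2} by itself does not yet give that $a\mapsto a'$ is a birational involution, though that part can be repaired.)

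For comparison, the paper's proof is a short duality argument that consumes the hypothesis exactly where your sketch needs it. Since $\Phi$ is not dual quadratic, the dual planarization $\Phi^*$ is strictly cubic (and neither trivial nor co-trivial), so the $\Phi^*$-preimage of the plane $P^*_{a_1^*}$ is a cubic curve $a_1^*\cup\si^*_{a_1^*}$ in $\P^{2*}$ with $\si^*_{a_1^*}$ a conic. By Theorem~\ref{t:deg}, a generic point $\Phi(a_1)\in S_\Phi$ has a second preimage $a_2$; dually this says the line $a_2^*$ lies in $\si^*_{a_1^*}$, so that conic splits as $a_2^*\cup a_3^*$. Since every line of $\P^{2*}$ is of the form $a^*$ for a unique $a\in\P^2$, the preimages of the generic point $\Phi(a_1)$ correspond exactly to the lines contained in this cubic, and there are three of them; hence $k=3$. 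Note that ``not dual quadratic'' enters precisely to guarantee that the residual curve on the dual side has degree two (i.e.\ that $\Phi^*$ is cubic rather than quadratic), which is the splitting room the whole count relies on. If you insist on a primal-side argument, the statement you must actually prove in the case $k=2$ is that the involution $\iota$ is quadratic and that $\Phi$ is then dual quadratic --- which is the very implication your step (ii) assumes away.
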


\begin{proof}
Consider the dual planarization $\Phi^*$.
Recall that it is defined on some nonempty Zariski open subset of $\P^{2*}$.
By the classification of planarizations, the map $\Phi^*$ must be cubic.
Moreover, $\Phi^*$ is neither trivial, nor co-trivial
(otherwise $\Phi$ would be trivial or co-trivial).

Consider the set $B_{\Phi^*}\subset\P^{2*}$ of all indeterminacy points of $\Phi^*$.
Since $\Phi$ is not dual quadratic, the planarization $\Phi^*$ is strictly cubic.
It follows that the set $B_{\Phi^*}$ is finite.

All facts established earlier for $\Phi$ apply also to $\Phi^*$.
In particular, a generic fiber of the map $\Phi^*$ consists of at least
two points, and a generic line $a^*\subset\P^{2*}$ is mapped to
a nodal cubic under $\Phi^*$.
If a line $a^*\subset\P^{2*}$ is non-special for $\Phi^*$, then we will write
$P^*_{a^*}$ for the unique plane in $\P^{3*}$ containing $\Phi^*(a^*)$.
Recall that $P^*_{a^*}$ is identified with $\Phi(a)$ under the 
natural identification between $\P^{3**}$ and $\P^3$.
Similarly to the properties of $\Phi$, the full preimage of $P^*_{a^*}$ under $\Phi^*$
is a cubic curve $\vk^*_{a^*}$ consisting of $a^*$ and some conic
$\sigma^*_{a^*}$.

Since the topological degree of $\Phi$ is at least two, we know that, for a generic line
$a_1^*\subset\P^{2*}$, there is another line in $\P^{2*}$ mapping
to the same plane $P^*_{a_1^*}$ under $\Phi^*$.
Hence the conic $\si^*_{a_1^*}$ splits into the union of two lines.
We will write $a_2^*$ and $a_3^*$ for these two lines.
Thus, the cubic $\vk^*_{a_1^*}$ splits into the union of the three lines
$a_1^*$, $a_2^*$ and $a_3^*$.
Generically, these three lines are different, and they map to
the same plane $P^*_{a_1^*}$.
This property of $\Phi^*$ translates to the following property of $\Phi$:
a generic point of $S_\Phi$ (corresponding to the plane $P^*_{a_1^*}$)
has exactly three preimages $a_1$, $a_2$, $a_3$.
\end{proof}

We now assume that the topological degree of the map $\Phi:\P^2\sm B_\Phi\to S_\Phi$
is equal to three, and the same is true for the dual planarization $\Phi^*$.
By Proposition \ref{p:Jac-fin}, we may also assume that $J_\Phi$ is finite.
Let $d$ denote the degree of the surface $\ol S_\Phi$.
By Proposition \ref{p:algdeg}, we have $3d=9-|B_\Phi|$.
It follows that $d$ is at most three, i.e., the surface $\ol S_\Phi$ is at most cubic.

Suppose that $\ol S_\Phi$ is a cubic surface.
It follows that the set $B_\Phi$ is empty, hence $S_\Phi$ is compact.
It is a classical fact that $S_\Phi$ contains at least one line
(recall that a smooth cubic surface contains 27 lines, and any cubic surface can
be approximated by smooth cubic surfaces).
Let $L$ be a line contained in $S_\Phi$.
Since $J_\Phi$ is finite, the $\Phi$-preimage of a generic point in $L$
consists of exactly three points.
This means that there are three different lines $\lambda_0$, $\lambda_1$ and 
$\lambda_2$ mapping to $L$. 
Indeed, the set $\Phi^{-1}(L)$ is contained in a cubic curve $\Phi^{-1}(P)$,
where $P\subset\P^3$ is any plane containing the line $L$.
On the other hand, there are three distinct elements of $\P^{2*}$ mapping to 
$P\in\P^{3*}$ under $\Phi^*$, for a generic choice of $P\supset L$. 
We see that $\Phi^{-1}(L)=\Phi^{-1}(P)$ is a union of three distinct lines,
each mapping one-to-one to $L$.
This leads to a contradiction, because we can take $P$ passing through $L$
and some other (generic) point of $S_\Phi$; then $\Phi^{-1}(L)\ne\Phi^{-1}(P)$.
The thus obtained contradiction with the assumption that
the topological degree of $\Phi$ is three concludes the proof of the Main Theorem.

\section{Quadratic planarizations}
\label{s:quad}

Throughout this section, we suppose that
$\Phi:\P^2\dashrightarrow\P^3$ is a quadratic rational map
such that the image of $\Phi$ lies in some quadratic surface $S$
but does not lie in a plane.
We will classify all such quadratic maps up to projective equivalence.
The classification must be classical but we failed to find a modern reference.
The following theorem describes the classification over $\C$.

\begin{thm}
\label{t:qclass}
Suppose that the ground field is $\C$.
Then $\Phi$ is equivalent to one and only one of the following three maps:
\begin{align*}
\Phi_{1}:&[x_0:x_1:x_2]\mapsto [x_0^2:x_0x_1:x_0x_2:x_1x_2],\\
\Phi_{2}:&[x_0:x_1:x_2]\mapsto [x_0^2:x_0x_1:x_1^2:x_0x_2],\\
\Phi_{3}:&[x_0:x_1:x_2]\mapsto [x_0^2:x_0x_1:x_1^2:x_2^2].
\end{align*}
The planarizations $\Phi_1$ and $\Phi_2$ are co-trivial.
The dual planarization of $\Phi_3$ is equivalent to $\Phi_3$.
\end{thm}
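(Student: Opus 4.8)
The plan is to organize the argument in three stages: first reduce any $\Phi$ as in the hypothesis to a normal form on a Veronese-type image, then enumerate the possibilities, and finally compute duals to establish the last two sentences. For the reduction, note that $\Phi = [Q_0:Q_1:Q_2:Q_3]$ where the $Q_\alpha$ span a three-dimensional subspace $W$ of the six-dimensional space of conics in $x_0,x_1,x_2$, and the image lies on a quadric $S$ but not a plane precisely when the $Q_\alpha$ satisfy exactly one quadratic relation $F(Q_0,Q_1,Q_2,Q_3)=0$ with $\operatorname{rank}(F)$ equal to $3$ or $4$ (rank $\le 2$ would force the image into a pair of planes, hence into a plane after discarding a component, contradicting the hypothesis; and genuinely two independent relations would drop the image dimension). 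I would stratify by whether $W$ contains a perfect square, a product of two distinct linear forms that also lies in $W$, and so on — equivalently, by the structure of the net of conics $\{c_0Q_0+\cdots+c_3Q_3=0\}$, i.e. the base locus $B_\Phi$ of the linear system. Using the $\PGL_2$-action on $[x_0:x_1:x_2]$ together with the $\PGL_3$-action on the target to absorb linear changes of the $Q_\alpha$, one normalizes the base locus and reads off the three listed forms.

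Concretely: a quadratic map $\P^2\dashrightarrow\P^3$ with two-dimensional image has $|B_\Phi| = 4 - \deg\overline{S_\Phi}/k$ by the Bezout-type count (the analogue of Proposition~\ref{p:algdeg} for quadrics: $4 = dk + |B_\Phi|$), and since $\overline{S_\Phi}$ is a quadric and $\Phi$ generically injective (a net of conics is birational onto a smooth quadric, and birational onto a quadric cone — one checks $k=1$ here because a generic conic in the net meets another generic conic in $4-|B_\Phi|$ free points, but after imposing one plane section there can be at most one free point left, forcing $d=2$, $k=1$, $|B_\Phi|=2$). So $B_\Phi$ always consists of two base points counted with multiplicity, and the cases are: (i) two distinct simple base points — this gives $\Phi_1$, with $[x_0^2:x_0x_1:x_0x_2:x_1x_2]$ having base points $[0:1:0]$ and $[0:0:1]$; (ii) one base point of multiplicity two but with distinct tangent directions of the net (an ordinary double base point), giving $\Phi_2$; (iii) an infinitely-near double base point, giving $\Phi_3$ — here $[x_0^2:x_0x_1:x_1^2:x_2^2]$ has the single base point $[0:0:1]$ with the whole net singular there. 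I would verify that these three nets are pairwise inequivalent by noting that the image quadrics have different ranks (ranks $4$, $3$, $3$) and, to separate $\Phi_2$ from $\Phi_3$, that the configuration of the two lines over which $\Phi$ is not injective (the images of the exceptional divisors) is different: for $\Phi_2$ these two lines meet, for $\Phi_3$ they coincide after the blow-up is resolved — or, more simply, compare the self-intersection data of the pencils of conics through the base locus, or just compare duals as below.

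For the co-triviality claims, observe directly: the image of $\Phi_1$ lies on the smooth quadric $y_0y_3 = y_1y_2$, and the line $\Phi_1(\lambda)$ for a line $\lambda\subset\P^2$ lies in a plane; one checks that all these planes pass through the point $[0:0:0:1]$ — indeed $\Phi_1 = [x_0\cdot x_0 : x_0\cdot x_1 : x_0 \cdot x_2 : x_1 x_2]$ shows that for a line $\lambda$ the $\Phi_1$-image conic degenerates, and the fixed point $\Phi_1(\text{base point }[0:1:0]) = [0:0:0:1]\cdot(\ldots)$ lies on every $P_\lambda$; more cleanly, the three coordinates $y_0,y_1,y_2$ are $x_0$ times linear forms, so $[y_0:y_1:y_2]$ is the projection from $[0:0:0:1]$ composed with a projective map of $\P^2$, hence $\Phi_1$ composed with projection from $[0:0:0:1]$ takes lines to lines, which is exactly co-triviality with center $b=[0:0:0:1]$. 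The same argument applies to $\Phi_2$ with $b=[0:0:0:1]$, since again $y_0,y_1,y_2$ factor through the coordinates $[x_0^2:x_0x_1:x_1^2]$, i.e. through the degree-$2$ Veronese $\P^1\hookrightarrow\P^2$ and then lines in $\P^2$ go to conics in that $\P^2$, wait — here one instead notes $y_0,y_1,y_2$ depend only on $[x_0:x_1]$, so all image points with fixed $[x_0:x_1]$ lie on a line through $[0:0:0:1]$, and a line $\lambda$ maps into the plane spanned by such lines, which contains $b$. Finally, for $\Phi_3$ I would compute the dual $\Phi_3^*$ explicitly: a line $a_0x_0 + a_1 x_1 + a_2 x_2 = 0$ maps to the conic which is the image of that line, spanning a plane whose coordinates $[c_0:c_1:c_2:c_3]$ in $\P^{3*}$ are quadratic in $[a_0:a_1:a_2]$ — carrying out the elimination shows $\Phi_3^*$ is again a quadratic map of the same type (single infinitely-near double base point, rank-$3$ image quadric), hence equivalent to $\Phi_3$ by the uniqueness just established.

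The main obstacle I anticipate is the case analysis separating $\Phi_2$ from $\Phi_3$ rigorously — both have rank-$3$ image quadric (a cone), both are co-trivial-looking in the factoring sense, and one must pin down that the difference is genuinely the infinitely-near versus ordinary nature of the double base point and that no coordinate change merges them; the cleanest route is probably to compare the pencils of conics in the net through the (single) base point and observe that for $\Phi_2$ the generic such conic is a pair of distinct lines through the base point while for $\Phi_3$ it is a double line, a projectively invariant distinction. The second delicate point is making sure the hypothesis "image not in a plane" is used correctly to exclude rank-$\le 2$ relations and to guarantee the net has no fixed component (otherwise $\Phi$ drops to a linear map), which is where the assumption that $\Phi$ is genuinely quadratic enters.
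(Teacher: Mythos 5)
Your reduction hinges on the claim that every such $\Phi$ has topological degree $k=1$ onto its image quadric and a base scheme of length $|B_\Phi|=2$, with the three normal forms then read off from the structure of the base locus. This bookkeeping is wrong, and $\Phi_3$ itself is the counterexample: the forms $x_0^2,x_0x_1,x_1^2,x_2^2$ have \emph{no} common zero in $\P^2$, so $B_{\Phi_3}=\varnothing$, and $\Phi_3$ maps $2$-to-$1$ onto the cone $y_1^2=y_0y_2$ (in your count $4=dk+|B_\Phi|$ this is $d=2$, $k=2$, $|B_\Phi|=0$). Hence your case (iii) misdescribes $\Phi_3$ (the point $[0:0:1]$ is not an indeterminacy point; it maps to $[0:0:0:1]$), and your case (ii) mislabels $\Phi_2$: its web is exactly the family of conics through $[0:0:1]$ tangent to $x_0=0$ there, i.e.\ the ``infinitely near'' type. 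As written, the enumeration misses the base-point-free, degree-two stratum entirely, so exhaustiveness of the list is not established. The invariant you propose to separate $\Phi_2$ from $\Phi_3$ is also false: the generic member of either web is a smooth conic (e.g.\ $x_1^2+x_0x_2$ for $\Phi_2$, $x_0x_1+x_2^2$ for $\Phi_3$), not a line pair or a double line; and since your verification that $\Phi_3^*$ is equivalent to $\Phi_3$ appeals to ``the uniqueness just established'', it inherits the same gap.

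A second, more local error: the co-trivial center of $\Phi_2$ is not $[0:0:0:1]$. For the line $x_2=\alpha x_0+\beta x_1$ one gets $\Phi_2(\lambda)\subset\{y_3=\alpha y_0+\beta y_1\}$, a plane that does not contain $[0:0:0:1]$; and the union of the lines through $[0:0:0:1]$ over the points with fixed $[x_0:x_1]$ is a quadric cone, so ``the plane spanned by such lines'' does not exist. The correct center is $[0:0:1:0]$: the coordinates $y_0,y_1,y_3$ of $\Phi_2$ are each divisible by $x_0$, so projection from $[0:0:1:0]$ composed with $\Phi_2$ is the identity on $\P^2$, and your (correct) projection argument for $\Phi_1$ applies verbatim with this center. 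For comparison, the paper's proof avoids the base-locus count altogether: it uses unique factorization against the quadric equation ($u_0u_1=u_2u_3$, resp.\ $u_1^2=u_0u_2$) to force the shape $[\psi_0\psi_1:\psi_2\psi_3:\psi_0\psi_2:\psi_1\psi_3]$, resp.\ $[x_0^2:x_0x_1:x_1^2:\varphi_3]$, then normalizes $\varphi_3$ to $x_0x_2$ or $x_2^2$ by explicit coordinate changes, and separates $\Phi_2$ from $\Phi_3$ by duality ($\Phi_2$ co-trivial, hence its dual is trivial, whereas the dual of $\Phi_3$ is equivalent to $\Phi_3$). Your base-locus stratification could in principle be repaired, but only after allowing $k=2$ with $B_\Phi=\varnothing$ as a separate stratum and correcting the identification of the $|B_\Phi|=2$ cases.
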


The corresponding real classification differs only in that the
complex equivalence class of $\Phi_1$ splits into two real equivalence
classes $\Phi_{1a}$ and $\Phi_{1b}$.

\begin{thm}
\label{t:qclassR}
Suppose that the ground field is $\R$.
Then $\Phi$ is equivalent to one and only one of the following four maps:
\begin{align*}
\Phi_{1a}:&[x_0:x_1:x_2]\mapsto [x_0^2:x_0x_1:x_0x_2:x_1x_2],\\
\Phi_{1b}:&[x_0:x_1:x_2]\mapsto [x_0^2:x_0x_1:x_0x_2:x_1^2+x_2^2],\\
\Phi_{2}:&[x_0:x_1:x_2]\mapsto [x_0^2:x_0x_1:x_1^2:x_0x_2],\\
\Phi_{3}:&[x_0:x_1:x_2]\mapsto [x_0^2:x_0x_1:x_1^2:x_2^2].
\end{align*}
The planarizations $\Phi_{1a}$, $\Phi_{1b}$ and $\Phi_2$ are co-trivial.
The dual planarization of $\Phi_3$ is equivalent to $\Phi_3$.
\end{thm}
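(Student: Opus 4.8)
\textbf{Proof proposal for Theorem \ref{t:qclassR}.}
The plan is to reduce the real classification to the complex one (Theorem \ref{t:qclass}) and then analyze how each complex equivalence class decomposes into real classes. First I would establish existence: given a real quadratic map $\Phi$ whose image lies in a quadric surface $S$ but not in a plane, one may complexify and apply Theorem \ref{t:qclass} to land in one of the three complex normal forms $\Phi_1$, $\Phi_2$, $\Phi_3$. The task is then to determine, for each complex normal form, the set of real forms in the same complex class, i.e.\ to compute the relevant Galois cohomology $H^1(\mathrm{Gal}(\C/\R), \mathrm{Aut})$ concretely, where $\mathrm{Aut}$ is the group of projective automorphisms of $\P^2$ and $\P^3$ commuting with the given complex map (equivalently, the stabilizer of the map under the $\PGL_2\times\PGL_3$ action). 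In practice I would not phrase it cohomologically but directly: a real map $\Phi'$ complex-equivalent to $\Phi_i$ differs from $\Phi_i$ by conjugation by a pair $(\eta,\mu)\in\PGL_2(\C)\times\PGL_3(\C)$, and the reality of $\Phi'$ forces a twisting condition that I would solve by hand.

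The key step-by-step outline: (1) For $\Phi_2$ and $\Phi_3$, show there is only one real class each. For $\Phi_3:[x_0:x_1:x_2]\mapsto[x_0^2:x_0x_1:x_1^2:x_2^2]$, the image surface is a real quadric of a fixed signature and the map's symmetry group is essentially defined over $\R$, so no nontrivial twist exists; similarly for $\Phi_2$. Concretely, I would exhibit that any real quadric containing a real version of the image can be brought to the standard real form by a \emph{real} projective change, using the fact that over $\R$ the relevant quadrics are determined up to real equivalence by rank and signature, and the constraints pin these down. (2) For $\Phi_1$, show there are exactly two real classes. The image of $\Phi_1$ lies in a quadric of rank $4$; over $\R$, a rank-$4$ quadric surface has two signatures, $(3,1)$ (a sphere/ellipsoid-type, or a hyperboloid of two sheets) and $(2,2)$ (a ruled quadric, a hyperboloid of one sheet). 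The map $\Phi_{1a}$, being monomial $[x_0^2:x_0x_1:x_0x_2:x_1x_2]$, lands on the doubly-ruled real quadric $y_0y_3=y_1y_2$ of signature $(2,2)$; the map $\Phi_{1b}=[x_0^2:x_0x_1:x_0x_2:x_1^2+x_2^2]$ lands on $y_0y_3=y_1^2+y_2^2$, signature $(3,1)$. Since signature is a real projective invariant, $\Phi_{1a}\not\sim\Phi_{1b}$ over $\R$, yet both become equivalent over $\C$ because there $y_1^2+y_2^2$ and $y_1y_2$ differ by a complex linear change. (3) Verify the co-triviality claims: $\Phi_{1a}$, $\Phi_{1b}$, $\Phi_2$ each have image in a quadric that is a cone or has a distinguished point — more precisely, for each one exhibit a point $b\in\P^3$ through which every plane $P_\lambda$ (the plane spanned by the image of a line $\lambda$) passes; e.g.\ for $\Phi_2$ one checks directly that the planes spanned by images of lines all contain a fixed point. (4) For $\Phi_3$, compute its dual $\Phi_3^*$ explicitly from the formulas (as indicated in the introduction, the dual of a quadratic rational planarization has an explicit formula) and exhibit a projective equivalence $\Phi_3^*\sim\Phi_3$; this is a finite symmetric-map computation.

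The main obstacle I expect is step (2): proving that $\Phi_{1a}$ and $\Phi_{1b}$ are genuinely inequivalent over $\R$ and that these are the \emph{only} two real forms — i.e.\ that no third real form exists and that every real map in the complex class $\Phi_1$ is real-equivalent to one of them. The inequivalence itself follows cleanly from the signature invariant of the ambient quadric, but the completeness (exhaustiveness) requires understanding the real twists carefully: one must show the automorphism group of the complex map $\Phi_1$ is such that its real twists are classified by the two choices of signature and nothing more, ruling out, say, a twist arising from permuting the two rulings of the quadric combined with a Galois action. This amounts to a careful but elementary analysis of $\mathrm{Aut}(\Phi_1)$ and its real structures. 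The other steps are essentially bookkeeping: monomial-map computations, quadric-signature normal forms over $\R$, and verifying co-triviality and self-duality by direct substitution.
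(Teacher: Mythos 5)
Your overall strategy --- complexify, invoke Theorem \ref{t:qclass}, and then sort the real forms inside each complex class --- is viable and your invariants are correct (the signature $(2,2)$ versus $(3,1)$ of the ambient quadric does distinguish $\Phi_{1a}$ from $\Phi_{1b}$, and it matches the paper's invariant, since real base points of the web of conics correspond to the ruled quadric and conjugate base points to the definite one). But the proposal has a genuine gap exactly where you say the difficulty lies: the exhaustiveness statements are never proved. For the class of $\Phi_1$ you only name what would have to be done (``a careful but elementary analysis of $\mathrm{Aut}(\Phi_1)$ and its real structures''), and for $\Phi_2$, $\Phi_3$ the claim that ``the map's symmetry group is essentially defined over $\R$, so no nontrivial twist exists'' is an assertion, not an argument: $\mathrm{Aut}(\Phi_3)$ contains disconnected and finite pieces, so triviality of the relevant twists is not automatic and must be checked --- e.g.\ the real map $[x_0^2:x_0x_1:x_1^2:-x_2^2]$ is a priori a candidate for a second real form of $\Phi_3$ and has to be explicitly untwisted (here by a sign change of the last target coordinate). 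Since the content of the theorem over $\R$ \emph{is} precisely this exhaustiveness, the proposal as written does not yet constitute a proof.

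For comparison, the paper avoids the cohomological bookkeeping altogether and closes the gap by elementary means. In the nonsingular case it observes that the complexified web $\Lc_\Phi$ is the web of all conics through its two base points $a\ne b$; since the web is real, the pair $\{a,b\}$ is Galois-stable, hence either both points are real or they are complex conjugate. A \emph{real} change of coordinates in $\P^2$ normalizes them to $[0:1:0],[0:0:1]$ or to $[0:1:\pm i]$, and since the four components of $\Phi$ form a real basis of the (real) space of conics through $\{a,b\}$, a real change of coordinates in $\P^3$ yields $\Phi_{1a}$ or $\Phi_{1b}$; this gives existence and uniqueness of the two real forms in one stroke, and in particular disposes of your worry about ruling-swap twists. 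In the cone case the paper simply reruns the complex normal-form computation over $\R$ and notes that the only step that can fail is taking $\sqrt{a_2}$; when $a_2<0$ one uses $\sqrt{-a_2}$ and flips the sign of the last target coordinate. If you want to keep your descent-style outline, you must actually carry out the corresponding twist computations (or replace them by such direct normalizations); otherwise the argument establishes only the inequivalence of the listed forms, not that the list is complete.
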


In Section \ref{ss:C}, we prove Theorem \ref{t:qclass}, and
in Section \ref{ss:R}, we prove Theorem \ref{t:qclassR}.

\subsection{Complex classification}
\label{ss:C}
In this section, we assume that the ground field is $\C$.
The proof of Theorem \ref{t:qclass} consists of several lemmas.
We first assume that the quadric $S$ is non-degenerate.

\begin{lem}
\label{l:nondeg-par}
If the surface $S$ is given by the equation $u_0u_1=u_2u_3$ with respect
to some system of homogeneous coordinates $[u_0:u_1:u_2:u_3]$ in $\P^3$,
then $\Phi$ has the form
$$
[x_0,x_1,x_2]\mapsto [\psi_0\psi_1:\psi_2\psi_3:\psi_0\psi_2:\psi_1\psi_3],
$$
where $\psi_\alpha$, $\alpha=0$, $\dots$, $3$,
are homogeneous linear forms in $x_0$, $x_1$, $x_2$.
\end{lem}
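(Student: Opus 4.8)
The plan is to use the nondegenerate quadric $S\colon u_0u_1=u_2u_3$, which over $\C$ is the image of the Segre embedding $\P^1\times\P^1\hookrightarrow\P^3$, $([s_0:s_1],[t_0:t_1])\mapsto[s_0t_0:s_1t_1:s_0t_1:s_1t_0]$. The two rulings of $S$ are the families of lines $\{s=\text{const}\}$ and $\{t=\text{const}\}$. First I would observe that since $\Phi$ is a quadratic rational map whose image is (a dense subset of) $S$ but not contained in a plane, composing with the inverse Segre parametrization gives a rational map $\P^2\dashrightarrow\P^1\times\P^1$, i.e. a pair of rational maps $f=[\psi_0:\psi_1]\colon\P^2\dashrightarrow\P^1$ and $g=[\psi_2:\psi_3]\colon\P^2\dashrightarrow\P^1$ (here I relabel the linear forms to match the desired conclusion). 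The point is to show $f$ and $g$ each have degree one, i.e. are given by pairs of \emph{linear} forms rather than conics.

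The key step is a degree count via the pullback to $\P^2$ of the two rulings. A line in the first ruling of $S$ is cut out on $S$ by a plane, so its $\Phi$-preimage is a conic in $\P^2$; but that preimage is exactly a fiber of $f$ (a "level curve" $\psi_0 = c\,\psi_1$), which is the zero locus of a linear combination of $\psi_0$ and $\psi_1$. If $\psi_0,\psi_1$ had a common factor we could cancel it (and $\Phi$ would not be genuinely quadratic onto a surface), so $\psi_0,\psi_1$ are coprime; then the generic fiber of $f$ is a curve of degree $\deg\psi_0=\deg\psi_1$, and since it must be a conic — the preimage of a line on $S$ under the degree-two map $\Phi$ — we actually need to be a little careful: a priori the preimage of a ruling line could be a curve of degree $1$ (if $\Phi$ collapses things) or degree $2$. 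The clean way is to use that $\Phi^*\Oc_S(1)=\Oc_{\P^2}(2)$, and that a ruling line is half of a hyperplane section, forcing the two rulings to pull back to pencils of conics generated by products of linear forms. Concretely: the hyperplane section $u_0u_1=u_2u_3$ pulls back to $\psi_0\psi_1=\psi_2\psi_3$ as an identity of quartics on $\P^2$, which already shows the four quadratic coordinate forms of $\Phi$ factor as $\psi_0\psi_1,\ \psi_2\psi_3,\ \psi_0\psi_2,\ \psi_1\psi_3$ once we identify the linear factors correctly.

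So the heart of the argument is this factorization lemma: if $Q_0Q_1=Q_2Q_3$ is an identity among four quadratic forms in three variables, with the $Q_\alpha$ not all proportional to a single form, then (after reordering) each $Q_\alpha$ is a product of linear forms in such a way that $Q_0=\psi_0\psi_1$, $Q_1=\psi_2\psi_3$, $Q_2=\psi_0\psi_2$, $Q_3=\psi_1\psi_3$. This is pure unique factorization in the UFD $\C[x_0,x_1,x_2]$: from $Q_0Q_1=Q_2Q_3$ and coprimality considerations one sorts the irreducible (linear) factors of the left side between $Q_2$ and $Q_3$; the cases where some $Q_\alpha$ is irreducible quadratic are ruled out because then it would have to divide $Q_2$ or $Q_3$ forcing equality up to scalar and collapsing the image into a plane, contrary to hypothesis. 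I expect the bookkeeping of these factorization cases — and making precise the statement that $\Phi$ genuinely surjects onto the quadric (so that no common linear factor can be pulled out of all four $Q_\alpha$, and the identity $Q_0Q_1=Q_2Q_3$ is not trivially satisfied by a degenerate configuration) — to be the main obstacle; everything else is linear algebra and the Segre picture. Finally I would note that a change of coordinates on $\P^1\times\P^1$ (i.e. in the two $[s_0:s_1]$ and $[t_0:t_1]$ factors) is realized by a linear change of the pairs $(\psi_0,\psi_1)$ and $(\psi_2,\psi_3)$, which is why we are free to absorb scalars and obtain precisely the stated normal form $[\psi_0\psi_1:\psi_2\psi_3:\psi_0\psi_2:\psi_1\psi_3]$.
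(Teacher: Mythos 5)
Your ``factorization lemma'' is exactly the paper's proof: pull back $u_0u_1=u_2u_3$ to the identity $\varphi_0\varphi_1=\varphi_2\varphi_3$ among the quadratic coordinate forms of $\Phi$, rule out an irreducible quadratic factor because by unique factorization it would force two coordinates to be proportional and the image to lie in a plane, and then distribute the linear factors between the two sides using the UFD $\C[x_0,x_1,x_2]$. The Segre-embedding and ruling/degree-count framing at the start is correct but dispensable scaffolding (you abandon it yourself for the ``clean way''), and the case bookkeeping you defer is precisely the short argument the paper carries out, so your proposal is essentially the paper's argument and is correct.
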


\begin{proof}
The map $\Phi$ can be written in coordinates as
$u_\alpha=\varphi_\alpha(x_0,x_1,x_2)$, where $x_0$, $x_1$, $x_2$
are homogeneous coordinates in $\P^2$, and the index $\alpha$ runs from 0 to 3.

We claim that every quadratic polynomial $\varphi_\alpha$ is reducible.
Indeed, if one of these polynomials, say, $\varphi_0$ is irreducible,
then, by the unique factorization property, $\varphi_2$ or $\varphi_3$
is divisible by $\varphi_0$, hence is proportional to $\varphi_0$.
It follows however that the image of $\Phi$ lies in a plane, a contradiction.
Thus every $\varphi_\alpha$ is a product of two linear factors.
We write $\varphi_0$ as $\psi_0\psi_1$, where $\psi_0$ and $\psi_1$
are linear homogeneous polynomials in $x_0$, $x_1$, $x_2$.
Then $\varphi_2$ or $\varphi_3$ is divisible by $\psi_0$.
Relabeling $\varphi_2$ and $\varphi_3$ if necessary, we may assume that
$\varphi_2$ is divisible by $\psi_0$.
Set $\varphi_2=\psi_0\psi_2$, where $\psi_2$ is some linear polynomial.
It now follows from the identity $\varphi_0\varphi_1=\varphi_2\varphi_3$ that
$\psi_1\varphi_1=\psi_2\varphi_3$.
We see that $\varphi_3$ is divisible by $\psi_1$, therefore, $\varphi_3$
can be written as $\psi_1\psi_3$.
It follows that $\varphi_1=\psi_2\psi_3$.
\end{proof}

We can now classify all maps $\Phi$, for which $S$ is non-singular.

\begin{lem}
\label{l:C-nonsing}
Suppose that $S$ is nonsingular.
Then $\Phi$ is equivalent to the following map:
\begin{align*}
\Phi_{1}:&[x_0:x_1:x_2]\mapsto [x_0^2:x_0x_1:x_0x_2:x_1x_2].
\end{align*}
In particular, $\Phi$ is co-trivial.
\end{lem}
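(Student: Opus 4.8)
The plan is to start from Lemma \ref{l:nondeg-par}, which gives the form
$$
\Phi:[x_0:x_1:x_2]\mapsto [\psi_0\psi_1:\psi_2\psi_3:\psi_0\psi_2:\psi_1\psi_3]
$$
with $\psi_0,\psi_1,\psi_2,\psi_3$ linear forms, valid after choosing coordinates $[u_0:u_1:u_2:u_3]$ on $\P^3$ in which $S=\{u_0u_1=u_2u_3\}$. The idea is that these four linear forms live in the three-dimensional space of linear forms on $\P^2$, so they must satisfy a linear relation, and exploiting that relation together with coordinate changes on both sides brings $\Phi$ to the stated normal form $\Phi_1$.

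First I would dispose of degenerate configurations among the $\psi_\alpha$. If two of them are proportional, one checks directly that the image of $\Phi$ falls into a plane (for instance $\psi_0\parallel\psi_1$ forces a linear relation among the $u_\alpha$), contradicting our standing hypothesis; similarly the $\psi_\alpha$ cannot all be proportional to a single form. So we may assume no two of the $\psi_\alpha$ are proportional. Next, since $\psi_0,\psi_1,\psi_2$ are three linear forms on $\P^2$, either they are linearly independent or they satisfy a linear relation. I would argue that after possibly permuting indices (using the symmetry $u_0\leftrightarrow u_1$, $u_2\leftrightarrow u_3$, and $(u_0,u_1)\leftrightarrow(u_2,u_3)$ of the quadric $u_0u_1=u_2u_3$, which corresponds to relabelling and regrouping the $\psi$'s) one can arrange that $\psi_0,\psi_1,\psi_2$ are linearly independent. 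Then I use the projective automorphism of $\P^2$ sending $\psi_0\mapsto x_0$, $\psi_1\mapsto x_1$, $\psi_2\mapsto x_2$; under this change $\Phi$ becomes $[x_0x_1:x_2\psi_3:x_0x_2:x_1\psi_3]$ where $\psi_3$ is now some linear form $a x_0+bx_1+cx_2$.

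The remaining task is to show that the free parameters in $\psi_3$ can be normalized away using the residual freedom — the automorphisms of $\P^2$ fixing the coordinate triangle (i.e. the diagonal torus and its normalizer, giving independent rescalings $x_i\mapsto t_i x_i$ and permutations) together with the automorphisms of $\P^3$ preserving the quadric $u_0u_1=u_2u_3$. Rescaling $x_0,x_1,x_2$ multiplies the four coordinates $x_0x_1,\ x_2\psi_3,\ x_0x_2,\ x_1\psi_3$ by monomials, which can be compensated by a corresponding diagonal change on the $u$-side preserving $S$; this lets me rescale the coefficients $a,b,c$ of $\psi_3$. One then checks that the cases where one or two of $a,b,c$ vanish either lead back (after a permutation of coordinates) to a simpler sub-case or again force the image into a plane, and that when $a,b,c$ are all nonzero one can rescale $\psi_3$ to $x_0+x_1+x_2$ (or any convenient representative), and then a final linear change of coordinates on $\P^2$ and on $\P^3$ produces exactly $\Phi_1:[x_0:x_1:x_2]\mapsto[x_0^2:x_0x_1:x_0x_2:x_1x_2]$. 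Co-triviality then follows immediately: every line $\{\ell=0\}$ with $\ell$ linear maps into the plane cut out by the corresponding linear relation among $x_0^2,x_0x_1,x_0x_2$, and all these planes contain the point $[0:0:0:1]$, so $\Phi_1$ is co-trivial with base point $b=[0:0:0:1]$.

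The main obstacle I anticipate is bookkeeping the case analysis in the last step without redundancy: there are several ways the linear forms $\psi_\alpha$ can be positioned relative to the coordinate triangle and relative to each other, and one must make sure each configuration either is handled by the symmetry group of the quadric or is ruled out by the "image not in a plane" hypothesis. The conceptual content is light — it is essentially the classical fact that a smooth quadric surface is $\P^1\times\P^1$ and a quadratic map to it not contained in a plane is, up to change of coordinates, the Segre-type parametrization — but making the reduction airtight requires carefully tracking which automorphisms remain available after each normalization.
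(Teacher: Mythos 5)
Your first reduction step is wrong, and it is wrong in a way that matters: you claim that if two of the forms $\psi_\alpha$ are proportional then the image lies in a plane, citing $\psi_0\parallel\psi_1$ as an example. But with $\psi_0=\psi_1=x_0$, $\psi_2=x_1$, $\psi_3=x_2$ the map $[\psi_0\psi_1:\psi_2\psi_3:\psi_0\psi_2:\psi_1\psi_3]=[x_0^2:x_1x_2:x_0x_1:x_0x_2]$ has four linearly independent coordinate quadrics, so its image is not contained in any plane --- and this map is exactly $\Phi_1$ up to a permutation of the target coordinates. So the configuration you discard as contradictory contains the very normal form you are trying to reach (and other legitimate maps, e.g.\ $\psi_3\parallel\psi_1$ gives $[x_0x_1:x_1x_2:x_0x_2:x_1^2]$, again not in a plane). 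The proportionalities that genuinely force the image into a plane are only those within the pairs $(\psi_0,\psi_3)$ and $(\psi_1,\psi_2)$: e.g.\ $\psi_0\parallel\psi_3$ makes $\varphi_0=\psi_0\psi_1$ proportional to $\varphi_3=\psi_1\psi_3$. Since proportionality of two $\psi$'s is not removable by the relabelling symmetries of the quadric, your case analysis has a real hole: the excluded configurations must be normalized to $\Phi_1$, not dismissed. In addition, the concluding normalization (``one then checks \dots a final linear change of coordinates produces exactly $\Phi_1$'') is precisely the nontrivial bookkeeping you flag, and it is left unverified.

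The paper avoids this entirely: from the factorized form it reads off the indeterminacy set $B_\Phi=\{\psi_0=\psi_3=0\}\cup\{\psi_1=\psi_2=0\}$, shows it consists of two distinct points $a\ne b$ (using the no-plane hypothesis only through the pairs $(\psi_0,\psi_3)$, $(\psi_1,\psi_2)$, plus the observation that $a=b$ would force the image into a conic), and then notes that the linear web $\Lc_\Phi$ of conics must coincide with the full three-dimensional system of conics through $a$ and $b$; choosing $a=[0:1:0]$, $b=[0:0:1]$ and the standard generators $x_0^2,x_0x_1,x_0x_2,x_1x_2$ gives $\Phi_1$ with no case analysis. If you want to salvage your direct normalization, the cleanest fix is the same idea in different clothing: observe that $\Phi$ is the pair of central projections $[\psi_0:\psi_3]$ and $[\psi_1:\psi_2]$ from the two distinct points $a$, $b$, composed with the Segre embedding, and that any such pair is projectively standard.
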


\begin{proof}
There is a system of homogeneous coordinates $u_0$, $u_1$, $u_2$, $u_3$
in the target space $\P^3$ such that the surface $S$ is given by the
equation $u_0u_1=u_2u_3$.
By Lemma \ref{l:nondeg-par}, the map $\Phi$ has the form
$[x_0,x_1,x_2]\mapsto [\psi_0\psi_1:\psi_2\psi_3:\psi_0\psi_2:\psi_1\psi_3]$,
where $\psi_\alpha$ are linear forms in $x_0$, $x_1$, $x_2$.
The set of indeterminacy points of $\Phi$ is equal to
$B_\Phi=\{\psi_0=\psi_3=0\}\cup\{\psi_1=\psi_2=0\}$.
Indeed, if $\psi_0\ne 0$, then we must have $\psi_1=\psi_2=0$,
and if $\psi_1\ne 0$, then we must have $\psi_0=\psi_3=0$.

We claim that the set $B_\Phi$ consists of exactly two points.
The system of equations $\psi_0=\psi_3=0$ defines a point $a$.
Indeed, otherwise the linear functionals $\psi_0$ and $\psi_3$
must be proportional, and we may assume $\psi_0=\psi_3$.
In this case, we have $\varphi_0=\varphi_3$, which means that
the image of $\Phi$ lies in a plane section of $S$, a contradiction with our assumption.
Similarly, the system of equations $\psi_1=\psi_2=0$ defines a point $b$.
It remains to show that $a\ne b$.
Indeed, otherwise the map $\Phi$ factors through the central projection
of $\P^2\sm\{a\}$ onto $\P^1$.
It follows that the image of $\Phi$ lies in a conic, a contradiction with
our assumption.
Thus we have $B_\Phi=\{a,b\}$.

Consider the linear web of conics $\Lc_\Phi$ associated with $\Phi$.
All conics of $\Lc_\Phi$ pass through $a$ and $b$.
On the other hand, the linear system $\Lc$ of all conics passing through $a$
and $b$ has dimension 3.
Therefore, $\Lc_\Phi=\Lc$.
We can now choose homogeneous coordinates $[x_0:x_1:x_2]$ in $\P^2$
so that $a=[0:1:0]$ and $b=[0:0:1]$.
Then $\Lc$ is spanned by the following degenerate conics: $x_0^2=0$,
$x_0x_1=0$, $x_0x_2=0$ and $x_1x_2=0$.
The map $\Phi$ corresponding to this choice of generators coincides with $\Phi_1$,
as desired.
The planarization $\Phi_1$ is co-trivial: indeed, every line is mapped
under $\Phi_1$ to a plane passing through $[0:0:0:1]$.
\end{proof}

Continuing the complex classification of quadratic planarizations, we now assume
that $S$ is contained in a degenerate quadric.

\begin{lem}
\label{l:cone-par}
  Suppose that $S$ is given by the
  equation $u_1^2=u_0u_2$ with respect to some system of homogeneous coordinates
  $[u_0:u_1:u_2:u_3]$ in $\P^3$.
  Then the map $\Phi$ has the form
  $$
  [x_0:x_1:x_2]\mapsto [x_0^2:x_0x_1:x_1^2,\varphi_3(x_0,x_1,x_2)],
  $$
  where $\varphi_3$ is some homogeneous quadratic form in the variables
  $x_0$, $x_1$, $x_2$.
\end{lem}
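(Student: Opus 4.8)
The plan is to mimic the argument of Lemma \ref{l:nondeg-par}, replacing the factorization identity $\varphi_0\varphi_1=\varphi_2\varphi_3$ associated with a smooth quadric by the one associated with the cone $S=\{u_1^2=u_0u_2\}$. Write $\Phi$ in coordinates as $u_\al=\vp_\al(x_0,x_1,x_2)$, $\al=0,\dots,3$, with $\vp_\al$ homogeneous quadratic. The hypothesis that the image of $\Phi$ lies in $S$ says precisely that $\vp_1^2=\vp_0\vp_2$ identically in $\C[x_0,x_1,x_2]$, and the hypothesis that the image is not contained in a plane says precisely that $\vp_0,\vp_1,\vp_2,\vp_3$ are linearly independent; in particular, no two of $\vp_0,\vp_1,\vp_2$ are proportional and all three are nonzero.

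First I would show that $\vp_1$ factors as a product $\ell_0\ell_1$ of two linearly independent linear forms. If $\vp_1$ were irreducible, then unique factorization applied to $\vp_1^2=\vp_0\vp_2$ would force $\vp_0$ and $\vp_2$ to be scalar multiples of $\vp_1$, contradicting linear independence; so $\vp_1=\ell_0\ell_1$ with $\ell_0,\ell_1$ linear. If $\ell_0$ and $\ell_1$ were proportional, then $\vp_1$, and likewise each of $\vp_0,\vp_2$ (being degree-two divisors of $\vp_0\vp_2=\ell_0^2\ell_1^2$, hence proportional to $\ell_0^2$), would be proportional to $\ell_0^2$, again contradicting linear independence. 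Hence $\ell_0,\ell_1$ are independent.

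Next I would pin down $\vp_0$ and $\vp_2$. By unique factorization, a degree-two divisor of $\ell_0^2\ell_1^2=\vp_0\vp_2$ is, up to a scalar, one of $\ell_0^2$, $\ell_1^2$, $\ell_0\ell_1$. Since $\vp_0$ is not proportional to $\vp_1=\ell_0\ell_1$, it is proportional to $\ell_0^2$ or to $\ell_1^2$; after possibly swapping $u_0$ and $u_2$ (which preserves the equation $u_1^2=u_0u_2$ of $S$) we may assume $\vp_0=a\ell_0^2$ and $\vp_2=b\ell_1^2$ with $a,b$ nonzero, and then $\vp_1^2=\vp_0\vp_2$ gives $ab=1$. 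Applying the linear coordinate change $u_0\mapsto a^{-1}u_0$, $u_2\mapsto a\,u_2$ in $\P^3$ — which again preserves $u_1^2=u_0u_2$ — reduces us to $\vp_0=\ell_0^2$, $\vp_2=\ell_1^2$, and $\vp_1=c\,\ell_0\ell_1$ with $c^2=1$; if $c=-1$, the change $u_1\mapsto-u_1$ makes $c=1$. Finally, choosing homogeneous coordinates $[x_0:x_1:x_2]$ on $\P^2$ with $x_0=\ell_0$, $x_1=\ell_1$ and $x_2$ an arbitrary linear form completing this system yields $\Phi\colon[x_0:x_1:x_2]\mapsto[x_0^2:x_0x_1:x_1^2:\vp_3(x_0,x_1,x_2)]$, as claimed.

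There is no serious obstacle here; the argument is a short unique-factorization-plus-normalization computation, parallel to Lemma \ref{l:nondeg-par}. The only point demanding care is the bookkeeping of coordinate changes in $\P^3$: at each normalization step one must use only linear substitutions that preserve the chosen equation $u_1^2=u_0u_2$ of $S$ (swapping $u_0\leftrightarrow u_2$, rescaling $u_0,u_2$ by reciprocal factors, negating $u_1$), so that the conclusion is stated with respect to the original presentation of $S$.
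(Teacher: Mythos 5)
Your proof is correct and follows essentially the same route as the paper's: use $\vp_1^2=\vp_0\vp_2$ and unique factorization to show $\vp_1=\ell_0\ell_1$ with $\vp_0,\vp_2$ proportional to $\ell_0^2,\ell_1^2$ (the alternative being excluded because the image would lie in a plane), then normalize by coordinate changes in $\P^3$ and $\P^2$. Your only difference is extra care that the $\P^3$ changes (swap $u_0\leftrightarrow u_2$, reciprocal rescaling, $u_1\mapsto -u_1$) preserve the equation $u_1^2=u_0u_2$, which is a harmless refinement of the paper's ``up to multiplying coordinates by constants.''
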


\begin{proof}
Suppose that $\Phi$ is given by the equations $u_\alpha=\varphi_\alpha(x_0,x_1,x_2)$.
As before, we argue that $\varphi_1$ is reducible, otherwise it would be
proportional either to $\varphi_0$ or to $\varphi_2$.
Similarly, $\varphi_0$ and $\varphi_2$ are reducible.
We can write $\varphi_1$ as $\psi_0\psi_1$, where $\psi_0$ and $\psi_1$ are linear functions.
Then $\varphi_0$ or $\varphi_2$ is divisible by $\psi_0$; we may assume the former
and write $\varphi_0=\psi_0\tilde\psi_0$.
It follows from the equation $\varphi_1^2=\varphi_0\varphi_2$ that
$\psi_0\psi_1^2=\tilde\psi_0\varphi_2$.
Therefore, $\tilde\psi_0$ is proportional to $\psi_0$ or to $\psi_1$.
In the former case, we have $\varphi_0=\psi_0^2$ and $\varphi_2=\psi_1^2$,
up to a projective coordinate change in $\P^3$ (multiplying the homogeneous
coordinates by different constants).
In the latter case, $\varphi_0$ would be proportional to $\varphi_1$,
a contradiction with our assumption.

Thus we have
$\varphi_1=\psi_0\psi_1$, $\varphi_0=\psi_0^2$, $\varphi_2=\psi_1^2$
for some non-proportional linear forms $\psi_0$, $\psi_1$.
We can choose the homogeneous coordinates $[x_0:x_1:x_2]$ in $\P^2$
so that $\psi_0=x_0$ and $\psi_1=x_1$.
The map $\Phi$ now takes the form
$[x_0:x_1:x_3]\mapsto [x_0^2:x_0x_1:x_1^2:\varphi_3(x_0,x_1,x_2)]$,
where $\varphi_3$ is a quadratic form in the variables $x_0$, $x_1$, $x_2$,
as desired.
\end{proof}

The following lemma provides normal forms for $\Phi$ in the case, where
$S$ is an irreducible cone.

\begin{lem}
\label{l:C-sing}
  Suppose that $S$ is a singular
  irreducible quadric, i.e., a quadratic cone.
  Then $\Phi$ is equivalent to at least one of the maps
\begin{align*}
  \Phi_{2}:&[x_0:x_1:x_2]\mapsto [x_0^2:x_0x_1:x_1^2:x_2x_0]\\
  \Phi_{3}:&[x_0:x_1:x_2]\mapsto [x_0^2:x_0x_1:x_1^2:x_2^2].
\end{align*}
\end{lem}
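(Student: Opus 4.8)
The plan is to take the partial normal form furnished by Lemma~\ref{l:cone-par} and to use the residual projective freedom to normalize the last component $\varphi_3$. By that lemma we may assume $S=\{u_1^2=u_0u_2\}$ and
$$
\Phi\colon[x_0:x_1:x_2]\mapsto[x_0^2:x_0x_1:x_1^2:\varphi_3(x_0,x_1,x_2)],
$$
with $\varphi_3$ an arbitrary homogeneous quadratic form. The preliminary step is to pin down the stabilizer of $S$ in $\PGL_3$: since $S$ is a quadratic cone with unique vertex $v=[0:0:0:1]$, every such transformation fixes $v$, hence has the form $u_3\mapsto \mu u_3+(\text{linear form in }u_0,u_1,u_2)$ together with a substitution in $(u_0,u_1,u_2)$ preserving the conic $u_1^2=u_0u_2$; the latter, parametrized by $[s:t]\mapsto[s^2:st:t^2]$, forces that substitution to lie in $\mathrm{Sym}^2(\PGL_2)$. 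Two special kinds of admissible target transformations will be used: (i) $u_3\mapsto u_3+(au_0+bu_1+cu_2)$ and rescalings $u_3\mapsto\mu u_3$, which obviously fix $S$ since the equation of $S$ does not involve $u_3$; and (ii) for a change $(x_0,x_1)\mapsto A(x_0,x_1)$ of the source coordinates, $A\in GL_2$, the transformation $\mathrm{Sym}^2(A^{-1})$ applied to $(u_0,u_1,u_2)$ with $u_3$ fixed, which fixes $S$ and exactly undoes the effect of $A$ on $x_0^2,x_0x_1,x_1^2$.

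First I would clear the ``pure'' part of $\varphi_3$: writing $\varphi_3=\alpha x_0^2+\beta x_0x_1+\gamma x_1^2+\delta x_0x_2+\epsilon x_1x_2+\zeta x_2^2$, the transformation of type~(i) given by $u_3\mapsto u_3-\alpha u_0-\beta u_1-\gamma u_2$ replaces $\varphi_3$ by $\delta x_0x_2+\epsilon x_1x_2+\zeta x_2^2$ and leaves the other components unchanged. Since $\Phi$ is not contained in a plane, $\varphi_3$ is now a nonzero form. I then split into two cases according to whether $\zeta=0$.

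If $\zeta=0$, then $\varphi_3=(\delta x_0+\epsilon x_1)x_2$ with $(\delta,\epsilon)\ne(0,0)$. Choosing new coordinates $x_0',x_1'$ on $\P^2$ with $x_0'=\delta x_0+\epsilon x_1$ (and $x_2$ fixed) turns $\varphi_3$ into $x_0'x_2$, while $x_0^2,x_0x_1,x_1^2$ undergo an invertible linear substitution which we undo by a type~(ii) target transformation; after an innocuous rescaling of $u_3$ we obtain $\Phi_2$. If $\zeta\ne0$, complete the square:
$$
\varphi_3=\zeta\Bigl(x_2+\tfrac{1}{2\zeta}(\delta x_0+\epsilon x_1)\Bigr)^{2}-\tfrac{1}{4\zeta}(\delta x_0+\epsilon x_1)^{2}.
$$
The last term is a quadratic form in $x_0,x_1$, i.e.\ a linear combination of $x_0^2,x_0x_1,x_1^2$, so it can be absorbed into $u_3$ by a type~(i) transformation; then the change $x_2\mapsto x_2-\tfrac{1}{2\zeta}(\delta x_0+\epsilon x_1)$ of $\P^2$ (fixing $x_0,x_1$) together with the rescaling $u_3\mapsto u_3/\zeta$ reduces $\varphi_3$ to $x_2^2$, giving $\Phi_3$. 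In either case $\Phi$ is equivalent to $\Phi_2$ or to $\Phi_3$, as claimed.

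The only real work is the bookkeeping of the preliminary step — identifying which target coordinate changes preserve the cone $S$ — and checking that the two families (i) and (ii) above suffice to carry out every normalization. Once this is in place, all remaining steps are routine linear algebra, so I do not anticipate a genuine obstacle; the mild subtlety to watch is simply that each target transformation invoked really lies in the stabilizer of $S$.
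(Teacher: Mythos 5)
Your proposal is correct and follows essentially the same route as the paper: reduce to the form of Lemma~\ref{l:cone-par}, absorb the part of $\varphi_3$ quadratic in $x_0,x_1$ into the last target coordinate, and split on whether the coefficient of $x_2^2$ vanishes, reaching $\Phi_2$ by a linear change in $(x_0,x_1)$ undone on $(u_0,u_1,u_2)$ and $\Phi_3$ by completing the square (the paper scales $x_2$ by $1/\sqrt{a_2}$ where you rescale $u_3$ by $1/\zeta$ --- an inessential variant that incidentally also avoids the sign issue later handled in Lemma~\ref{l:R-sing}). Your preliminary analysis of the stabilizer of $S$ in $\PGL_3$ is superfluous, since the equivalence relation allows arbitrary projective transformations of the target, which is how the paper proceeds.
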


\begin{proof}
There is a homogeneous coordinate system $u_0$, $u_1$, $u_2$, $u_3$
in the space $\P^3$ such that the cone $S$ is given by the equation $u_1^2=u_0u_2$.
By Lemma \ref{l:cone-par}, we may assume that the map $\Phi$ has the form
$[x_0:x_1:x_3]\mapsto [x_0^2:x_0x_1:x_1^2:\varphi_3(x_0,x_1,x_2)]$,
where $\varphi_3$ is a quadratic form in the variables $x_0$, $x_1$, $x_2$.
We may change $\varphi_3$ by adding any linear combination of $x_0^2$, $x_0x_1$, $x_1^2$,
i.e., by adding any quadratic form in $x_0$, $x_1$ only:
this can be implemented by means of a projective coordinate change in the target
space $\P^3$.
Thus we may assume that
$\varphi_3(x_0,x_1,x_2)=x_2(a_0x_0+a_1x_1+a_2x_2)$.

Suppose first that $a_2=0$.
Then at least one of the coefficients $a_0$, $a_1$ is nonzero.
Assume e.g. that $a_0\ne 0$ (the case $a_1\ne 0$ is obtained
from this case by interchanging $x_0$ and $x_1$).
Then we set $\wt x_0=a_0x_0+a_1x_1$, $\wt x_1=x_1$, $\wt x_2=x_2$.
In the new variables, the map $\Phi$ has the form
$\Phi:[\wt x_0:\wt x_1:\wt x_2]\mapsto [U_0:U_1:U_2:\wt x_2\wt x_0]$,
where $U_0$, $U_1$ and $U_2$ are linearly independent quadratic
forms in $\wt x_0$, $\wt x_1$.
Since the space of quadratic forms in $\wt x_0$, $\wt x_1$ is three-dimensional,
the monomials $\wt x_0^2$, $\wt x_0\wt x_1$, $\wt x_1^2$ can be
represented as linear combinations of $U_0$, $U_1$, $U_2$.
Therefore, changing homogeneous coordinates in the target space $\P^3$,
we can reduce $\Phi$ to the form
$\Phi:[\wt x_0:\wt x_1:\wt x_2]\mapsto
[\wt x_0^2:\wt x_0\wt x_1:\wt x_1^2:\wt x_2\wt x_0]$,
i.e., to the form $\Phi_2$.

Suppose now that $a_2\ne 0$.
Then we make the following change of variables:
$x_0=\wt x_0$, $x_1=\wt x_1$,
$x_2=c_0\wt x_0+c_1\wt x_1+c_2\wt x_2$.
In the new variables, the map $\Phi$ has the form
\begin{align*}
[\wt x_0:\wt x_1:\wt x_2]\mapsto [\wt x_0^2:\wt x_0\wt x_1:\wt x_1^2:
\wt\varphi_3(\wt x_0,\wt x_1,\wt x_2)],\\
\wt\varphi_3(\wt x_0,\wt x_1,\wt x_2)=
(c_0\wt x_0+c_1\wt x_1+c_2\wt x_2)\left((a_0+a_2c_0)\wt x_0+(a_1+a_2c_1)\wt x_1+
a_2c_2\wt x_2\right)=\\
=c_2\wt x_2\left((a_0+2a_2c_0)\wt x_0+(a_1+2a_2c_1)\wt x_1+a_2c_2\wt x_2\right)+\dots.
\end{align*}
The dots mean a quadratic form in $\wt x_0$, $\wt x_1$.
We now set
$$
c_0=-\frac{a_0}{2a_2},\quad c_1=-\frac{a_1}{2a_2},\quad c_2=\frac 1{\sqrt{a_2}}
$$
(we choose any one of the two complex values of $\sqrt{a_2}$).
Then we have $\wt\varphi_3=\wt x_2^2+\dots$, where dots mean a
quadratic form in $\wt x_0$, $\wt x_1$.
The latter can be
killed by a suitable change of variables in the target space (more precisely,
by adding a certain linear combination of the coordinates $u_0$, $u_1$, $u_2$
to the last coordinate $u_3$).
Thus we reduced $\Phi$ to the form
$[\wt x_0:\wt x_1:\wt x_2]\mapsto [\wt x_0^2:\wt x_0\wt x_1:\wt x_1^2:\wt x_2^2]$,
i.e., to the form $\Phi_3$.
\end{proof}

Finally, we need to distinguish between $\Phi_2$ and $\Phi_3$, i.e.,
to prove that these two maps are not equivalent.
To this end, it suffices to compute the dual planarizations of
$\Phi_2$ and $\Phi_3$ and observe that equivalent planarizations must
have equivalent dual planarizations.
The planarization $\Phi_2$ is co-trivial: the image of every line
is contained in a plane passing through $[0:0:1:0]$.
On the other hand, a straightforward computation shows that the dual
planarization of $\Phi_3$ is equivalent to $\Phi_3$, in particular, is
not trivial.
This concludes the proof of Theorem \ref{t:qclass}.

\subsection{Real classification}
\label{ss:R}
In this subsection, we assume that the ground field is $\R$.
The proof of Theorem \ref{t:qclassR} splits into the following two lemmas.

\begin{lem}
\label{l:R-nonsing}
  Suppose that $S$ is nonsingular.
  Then $\Phi$ is equivalent to one and only one of the following maps:
  \begin{align*}
  \Phi_{1a}:&[x_0:x_1:x_2]\mapsto [x_0^2:x_0x_1:x_0x_2:x_1x_2],\\
  \Phi_{1b}:&[x_0:x_1:x_2]\mapsto [x_0^2:x_0x_1:x_0x_2:x_1^2+x_2^2].
  \end{align*}
\end{lem}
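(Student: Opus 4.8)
The strategy is to complexify, apply the complex classification of Lemma~\ref{l:C-nonsing}, and then keep track of complex conjugation.

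First I would complexify $\Phi$. Since $S$ is a nonsingular real quadric and has real points (it contains the image of $\Phi$), its complexification $S_\C\subset\P^3_\C$ is a nonsingular complex quadric. Writing $\Phi$ in real homogeneous coordinates as $u_\alpha=\varphi_\alpha(x_0,x_1,x_2)$, the complexified map $\Phi_\C$ has image in $S_\C$, and this image lies in no complex plane: a nonzero complex linear relation $\sum c_\alpha\varphi_\alpha=0$ among the real forms $\varphi_\alpha$ would, on taking real and imaginary parts, yield a nonzero real one, contradicting the hypothesis that the image of $\Phi$ spans no real plane. Hence the proof of Lemma~\ref{l:C-nonsing} applies to $\Phi_\C$ and gives the following \emph{intrinsic} description: the base set $B_\Phi$ consists of exactly two distinct points $a,b\in\P^2(\C)$, and the linear web $\Lc_\Phi$ is the complete linear system of conics through $a$ and $b$.

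Next I would descend to $\R$. Since $\Phi$ is defined over $\R$, complex conjugation permutes $\{a,b\}$, so either (i) both $a$ and $b$ lie in $\P^2(\R)$, or (ii) $b=\bar a$ and neither point is real. In case (i) I choose real homogeneous coordinates with $a=[0:1:0]$ and $b=[0:0:1]$; then $\Lc_\Phi$ is spanned over $\R$ by $x_0^2$, $x_0x_1$, $x_0x_2$, $x_1x_2$, and composing $\Phi$ with the real projective automorphism of $\P^3$ carrying the given basis $(\varphi_\alpha)$ of $\Lc_\Phi$ to this one identifies $\Phi$ with $\Phi_{1a}$. In case (ii) the line $\overline{ab}$ is conjugation-invariant, hence defined over $\R$, so I may take real coordinates in which it is $\{x_0=0\}$; on this line $\{a,b\}$ is a conjugate pair of points of $\P^1(\C)$, and since the group of real projective transformations of $\P^1$ acts transitively on $\P^1(\C)\sm\P^1(\R)$, a further real coordinate change fixing $x_0$ brings $a,b$ to $[0:1:i]$, $[0:1:-i]$. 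A direct computation shows that a conic $\sum a_{ij}x_ix_j$ passes through both of these points exactly when $a_{12}=0$ and $a_{11}=a_{22}$, so $\Lc_\Phi$ is spanned over $\R$ by $x_0^2$, $x_0x_1$, $x_0x_2$, $x_1^2+x_2^2$, and as before $\Phi$ is identified with $\Phi_{1b}$.

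Finally I would check that $\Phi_{1a}$ and $\Phi_{1b}$ are inequivalent, which yields the ``one and only one'' assertion. The Zariski closures of their images are the quadrics $\{y_0y_3=y_1y_2\}$ and $\{y_0y_3=y_1^2+y_2^2\}$, whose defining real quadratic forms have signatures $(2,2)$ and $(3,1)$ respectively; an equivalence of the two maps would induce a real projective equivalence of these quadrics, which is impossible since the signature is a projective invariant up to an overall sign. (Concretely, the image of $\Phi_{1a}$ contains real lines, e.g.\ the image of a generic line through $[0:1:0]$, whereas a nonsingular real quadric of signature $(3,1)$ contains no real line, so no line maps to a line under $\Phi_{1b}$.) The step demanding the most care is the descent: one must verify that the intrinsic description of $B_\Phi$ and $\Lc_\Phi$ coming from the complex case is genuinely Galois-equivariant, and handle the two orbit types of a conjugation-stable pair of points correctly; the remaining normalizations are routine.
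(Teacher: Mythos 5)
Your proof is correct and follows essentially the same route as the paper: complexify, invoke the proof of Lemma \ref{l:C-nonsing} to identify $\Lc_\Phi$ with the web of conics through two base points, and split into the cases where the conjugation-stable pair $\{a,b\}$ is real or complex conjugate, normalizing to $[0:1:\pm i]$ in the latter case. Your additional signature argument for the inequivalence of $\Phi_{1a}$ and $\Phi_{1b}$ is a nice touch that the paper leaves implicit (there it follows from the real-vs-conjugate dichotomy of the base points being invariant).
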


\begin{proof}
  Consider the linear web of conics $\Lc_\Phi$ associated with the
  complexification of $\Phi$.
  By the proof of Lemma \ref{l:C-nonsing}, the web $\Lc_{\Phi}$ has two
  different complex base points $a\ne b$ and consists of all conics
  passing through $a$ and $b$.
  There are two possibilities: $a$ and $b$ can be real or complex conjugate.
  Suppose first that $a$ and $b$ are real.
  Then, as in the proof of Lemma \ref{l:C-nonsing}, we show that
  $\Phi$ is equivalent to $\Phi_{1a}$.
  Suppose now that $a$ and $b$ are complex conjugate.
  Performing a suitable change of homogeneous coordinates $[x_0:x_1:x_2]$ in $\P^2$
  over real numbers, we may assume that $a=[0,1,i]$ and $b=[0,1,-i]$.
  Then the web of all conics passing through $a$ and $b$ is spanned by
  the degenerate conics $x_0^2=0$, $x_0x_1=0$, $x_0x_2=0$ and $x_1^2+x_2^2=0$.
  Note that, in the affine chart $x_0=1$ with affine coordinates $x_1$, $x_2$,
  the web $\Lc_\Phi$ is exactly the web of all circles.
  With this choice of generating conics, $\Phi$ coincides with $\Phi_{1b}$.
\end{proof}

\begin{lem}
\label{l:R-sing}
  Suppose that $S$ is singular but irreducible.
  Then $\Phi$ is equivalent to one and only one of the following maps:
  \begin{align*}
  \Phi_{2}:&[x_0:x_1:x_2]\mapsto [x_0^2:x_0x_1:x_1^2:x_2x_0]\\
  \Phi_{3}:&[x_0:x_1:x_2]\mapsto [x_0^2:x_0x_1:x_1^2:x_2^2].
  \end{align*}
\end{lem}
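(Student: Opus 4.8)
The plan is to retrace the complex classification (Lemmas~\ref{l:cone-par} and~\ref{l:C-sing}), keeping a close eye on the two places where algebraic closedness was used: extracting square roots of forms, and extracting a square root of a scalar. First I would normalize $S$ over $\R$. Since $\Phi$ is defined over the reals, its nonempty real image lies in the real locus of $S$, and by hypothesis this image is not contained in a plane; hence $S$ cannot be the anisotropic cone $\{u_0^2+u_1^2+u_2^2=0\}$, whose real locus is a single point. The only other singular irreducible real quadric, up to a real projective change of coordinates in $\P^3$, is the cone $\{u_1^2=u_0u_2\}$, so we may assume $S$ is this cone.

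Next I would establish the real analogue of Lemma~\ref{l:cone-par}. Writing $\Phi$ as $u_\alpha=\varphi_\alpha(x_0,x_1,x_2)$, the factorization argument of that lemma — carried out over $\C$ and using that no two of $\varphi_0,\varphi_1,\varphi_2$ are proportional — shows that $\varphi_0=\Psi_0^2$ and $\varphi_2=\Psi_2^2$ for some complex linear forms, whence $\varphi_1=\pm\Psi_0\Psi_2$. But $\varphi_0$ and $\varphi_2$ are \emph{real} quadratic forms which become squares over $\C$, so each has rank at most one over $\R$, i.e. $\varphi_0=\pm\ell_0^2$ and $\varphi_2=\pm\ell_1^2$ for real linear forms $\ell_0,\ell_1$. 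On the real cone one has $\varphi_0\varphi_2=\varphi_1^2\ge 0$, so the two signs agree, and after applying the cone automorphisms $(u_0,u_2)\mapsto(-u_0,-u_2)$ and, if needed, $u_1\mapsto-u_1$, we reduce to $\varphi_0=\ell_0^2$, $\varphi_1=\ell_0\ell_1$, $\varphi_2=\ell_1^2$, with $\ell_0,\ell_1$ non-proportional (else $\varphi_0$ and $\varphi_2$ would be proportional). A real projective change of coordinates in $\P^2$ carrying $\ell_0,\ell_1$ to $x_0,x_1$ then puts $\Phi$ in the form $[x_0:x_1:x_2]\mapsto[x_0^2:x_0x_1:x_1^2:\varphi_3(x_0,x_1,x_2)]$ with $\varphi_3$ a real quadratic form.

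Finally I would run the normalization of Lemma~\ref{l:C-sing} over $\R$. Subtracting the part of $\varphi_3$ depending only on $x_0,x_1$ by a real change in the target, we may assume $\varphi_3=x_2(a_0x_0+a_1x_1+a_2x_2)$ with real $a_i$ not all zero (otherwise the image lies in a plane). If $a_2=0$, the same linear substitution as in the complex proof, followed by a real change in the target (using that $x_0^2,x_0x_1,x_1^2$ span the quadratic forms in $x_0,x_1$ over $\R$), brings $\Phi$ to $\Phi_2$. If $a_2\ne 0$, completing the square by a real substitution in $x_2$ and then killing the resulting quadratic form in $x_0,x_1$ by a real target change reduces $\Phi$ to $[x_0^2:x_0x_1:x_1^2:a_2x_2^2]$; applying the cone automorphism $u_3\mapsto-u_3$ when $a_2<0$ and rescaling $x_2$ over $\R$ yields $\Phi_3$ in either case. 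The uniqueness clause is free: $\Phi_2$ and $\Phi_3$ are already inequivalent over $\C$ by Theorem~\ref{t:qclass}, hence a fortiori over $\R$.

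I expect the main obstacle to be exactly this last point. Over $\C$ the completed square could be rescaled to $x_2^2$, but over $\R$ it is $a_2x_2^2$ with a genuine sign that cannot be absorbed by a real change of variables in $\P^2$. The key observation is that $u_3\mapsto-u_3$ preserves the cone $\{u_1^2=u_0u_2\}$, so both signs yield the same real equivalence class — in contrast to Lemma~\ref{l:R-nonsing}, where an analogous sign obstruction genuinely splits the class of $\Phi_1$.
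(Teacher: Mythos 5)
Your proof is correct and takes essentially the same route as the paper's: reduce to the complex argument of Lemma \ref{l:C-sing}, note that the only obstruction is $\sqrt{a_2}$, absorb the resulting sign by changing the sign of the last target coordinate, and deduce uniqueness from the complex inequivalence of $\Phi_2$ and $\Phi_3$. The additional care you take with the real normal form of the singular quadric (ruling out the anisotropic cone via its pointlike real locus) and with the $\pm$ signs in the real factorization of $\varphi_0,\varphi_1,\varphi_2$ merely fills in details the paper subsumes under ``applies verbatim over reals.''
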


\begin{proof}
  The proof of Lemma \ref{l:C-sing} applies verbatim over reals,
  except that it is not always possible to take $\sqrt{a_2}$.
  If $a_2<0$, then we set instead $c_2=1/\sqrt{-a_2}$ and reduce
  $\Phi$ to the form
  $[\wt x_0:\wt x_1:\wt x_2]\mapsto [\wt x_0^2:\wt x_0\wt x_1:\wt x_1^2:-\wt x_2^2]$.
  However, changing sign of the last coordinate in $\P^3$ gets it
  back to the form $\Phi_3$.
  Since $\Phi_2$ and $\Phi_3$ are not equivalent over complex numbers,
  neither are they over reals.
\end{proof}

\section{Normal forms of planarizations}
\label{s:norf}

In this section, we give a complete list of local normal forms of planarizations.
We assume that the ground field is $\R$.

\begin{thm}
  \label{t:nf}
  Suppose that $U\subset\P^2$ is an open subset and $\Phi:U\to\P^3$ is a planarization.
  Then, for every open subset $V\subset U$ there exists a $($possibly smaller$)$
  open subset $W\subset V$ such that $\Phi:W\to\P^3$ is projectively equivalent to at least one of the following normal forms:
  \begin{itemize}
\item[$(T)$:] $[x:y:z]\mapsto [f(x,y,z):g(x,y,z):h(x,y,z):0]$
\item[$(CT)$:] $[x:y:z]\mapsto [x:y:z:f(x,y,z)]$
\item[$(Q_1)$:] $[x:y:z]\mapsto [xy:xz:yz:x^2+y^2+z^2]$
\item[$(Q_2)$:] $[x:y:z]\mapsto [xy:xz:yz:x^2 - y^2 + z^2]$
\item[$(Q_3)$:] $[x:y:z]\mapsto [x^2+y^2:y^2+z^2:xz:yz]$
\item[$(Q_4)$:] $[x:y:z]\mapsto [x^2-y^2:xy:yz:z^2]$
\item[$(Q_5)$:] $[x:y:z]\mapsto [xz-yz:x^2:y^2:z^2]$
\item[$(Q_6)$:] $[x:y:z]\mapsto [x^2:xz-y^2:yz:z^2]$
\item[$(Q_7)$:] $[x:y:z]\mapsto [y^2-z^2:xy:xz:yz]$
\item[$(Q_8)$:] $[x:y:z]\mapsto [xy:xz:y^2:z^2]$
\item[$(Q_9)$:] $[x:y:z]\mapsto [xy:xz-y^2:yz:z^2]$
\item[$(Q_{10})$:] $[x:y:z]\mapsto [x^2:xy:y^2:z^2]$
\item[$(C_1)$:] $[x:y:z]\mapsto [z(x^2+y^2):y(x^2+z^2):x(y^2+z^2):xyz]$
\item[$(C_2)$:] $[x:y:z]\mapsto [z(x^2-y^2):y(x^2+z^2):x(y^2-z^2):xyz]$
\item[$(C_3)$:] $[x:y:z]\mapsto [x^2z:z(x^2+y^2):x(x^2+y^2-z^2):y(x^2+y^2+z^2)]$
\item[$(C_4)$:] $[x:y:z]\mapsto [x^2y:x(x^2-y^2):z(x^2+y^2):yz^2]$
\item[$(C_5)$:] $[x:y:z]\mapsto [x^2(x+y):y^2(x+y):z^2(x-y):xyz]$
\item[$(C_6)$:] $[x:y:z]\mapsto [x^3:xy^2:2xyz-y^3:z(xz-y^2)]$.
\end{itemize}
Here $f$, $g$ and $h$ are sufficiently smooth degree 1 homogeneous functions of $x$, $y$, $z$.
In normal form $(T)$, the mapping $[x:y:z]\mapsto [f:g:h]$ represents an arbitrary
sufficiently smooth embedding of $W$ into $\P^2$.
\end{thm}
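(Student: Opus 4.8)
The plan is to reduce the theorem to results already established in the paper, namely the Main Theorem, the classification of quadratic planarizations (Theorems \ref{t:qclass}, \ref{t:qclassR}), and the analysis of cubic planarizations in Sections \ref{s:plan}--\ref{s:desc}, and then simply to \emph{enumerate} the finitely many equivalence classes that survive in each type. By the Main Theorem, after passing to a possibly smaller open set $W$, the planarization $\Phi|_W$ is trivial, co-trivial, quadratic, or dual quadratic. The trivial case gives normal form $(T)$: choosing coordinates so the image plane is $\{u_3=0\}$, the map is $[x:y:z]\mapsto[f:g:h:0]$ where $[f:g:h]$ is an arbitrary sufficiently smooth map $W\to\P^2$, and after shrinking $W$ we may assume it is an embedding (if it is not even generically an embedding its image is a curve, and the map is also of type $(T)$ with a degenerate $[f:g:h]$, or can be treated as co-trivial; one should remark that the statement allows $[f:g:h]$ to be any embedding, so one must argue that a generic restriction is an immersion of rank $2$ unless $\Phi$ is already co-trivial). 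The co-trivial case gives $(CT)$: choosing the common point to be $b=[0:0:0:1]$ and using that the planes through $b$ are the fibers of the linear projection $\P^3\dashrightarrow\P^2$, the first three coordinates define a map $W\to\P^2$ which, after shrinking, is an embedding, so in suitable coordinates $\Phi=[x:y:z:f(x,y,z)]$ with $f$ an arbitrary sufficiently smooth degree-one homogeneous function.

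Next I would treat the quadratic case. By Theorem \ref{t:qclassR}, a quadratic planarization whose image is not contained in a plane is equivalent to one of $\Phi_{1a},\Phi_{1b},\Phi_2,\Phi_3$; but $\Phi_{1a},\Phi_{1b},\Phi_2$ are co-trivial, hence already covered by $(CT)$ up to shrinking, and one checks that in the relevant coordinates their restrictions literally fall under $(CT)$. What remains among genuinely quadratic maps with two-dimensional, non-planar image are those equivalent to $\Phi_3$, but in fact the \emph{quadratic} normal forms $(Q_1)$--$(Q_{10})$ in the list are \emph{not} all maps into a quadric surface: they are the quadratic \emph{planarizations} whose image is not a plane, i.e. maps into a quadric \emph{or} maps whose base locus / web structure forces planarity of line images. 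So here I need the full list of equivalence classes of quadratic rational maps $\P^2\dashrightarrow\P^3$ that are planarizations and are neither trivial nor co-trivial; this is exactly where the paper invokes the Corrado Segre / \cite{CSS} classification of quadratic maps. The plan is: a quadratic map is a planarization iff it takes lines to conics, which is automatic; so every quadratic rational map of two-dimensional image is a planarization. One then lists the equivalence classes of quadratic rational maps $\P^2\dashrightarrow\P^3$ with two-dimensional image (finitely many, from \cite{CSS}, indexed by the base-point configuration and the dimension/degeneracy of the web of conics), discards those whose image lies in a plane (trivial) and those that are co-trivial, and what is left are precisely the ten classes $(Q_1)$--$(Q_{10})$; for each one picks the indicated representative and checks it is equivalent to the \cite{CSS} normal form by an explicit $\PGL_2\times\PGL_3$ change of coordinates. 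The real-versus-complex subtlety (e.g. $\Phi_{1a}$ vs.\ $\Phi_{1b}$, a real node vs.\ a pair of complex-conjugate base points, real vs.\ complex forms of the web) accounts for why ten real forms appear rather than fewer complex ones.

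Finally, the dual-quadratic case yields the six cubic normal forms $(C_1)$--$(C_6)$. By definition a dual-quadratic planarization $\Phi$ is one whose dual $\Phi^*$ is a quadratic rational map; since $\Phi$ is neither trivial nor co-trivial, neither is $\Phi^*$ (as noted in Section \ref{s:desc}), so $\Phi^*$ is one of the quadratic planarizations just classified that is not co-trivial, i.e.\ equivalent over $\R$ to one of the ten maps $(Q_1)$--$(Q_{10})$ up to the co-triviality restriction — in fact $\Phi^*$ must lie among those non-co-trivial classes. Dualizing back, $\Phi=\Phi^{**}$ is recovered from $\Phi^*$ by an explicit formula (the coordinates of $\Phi^*$ are the $2\times 2$ minors, suitably arranged, of the Jacobian-type matrix of the linear forms defining $\Phi$; equivalently $\Phi^*$ sends a line to the unique plane through the conic image, and this is a bilinear-in-coefficients construction that is manifestly involutive). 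So one takes each quadratic planarization representative that is not co-trivial, computes its dual by this explicit recipe, and reads off a cubic rational map; after a projective change of coordinates in source and target one brings it to the listed form $(C_i)$. The enumeration of which $(Q_i)$ dualize to which $(C_j)$, together with the observation that some $(Q_i)$ are self-dual or dualize to co-trivial maps (and hence contribute no new cubic form), produces exactly six cubic normal forms.

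The main obstacle is the bookkeeping in the quadratic case: one must have in hand, or reprove, the complete list of $\PGL_2\times\PGL_3$-equivalence classes of quadratic rational maps $\P^2\dashrightarrow\P^3$ with two-dimensional image — organized by the configuration of base points (three distinct points, two points one infinitely near, one point with a tangent direction, a single point of multiplicity with various jet data, the empty base locus giving the Veronese, etc.) and, over $\R$, by the real forms of each configuration — and then for each class exhibit the explicit coordinate change to the chosen representative and decide whether it is trivial, co-trivial, or genuinely new. This is the step where \cite{CSS} is indispensable and where the real/complex dichotomy multiplies the case count; everything else (the reductions via the Main Theorem, the $(T)$ and $(CT)$ normalizations, and the dualization producing $(C_1)$--$(C_6)$) is comparatively mechanical. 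A secondary subtlety is ensuring that ``shrink to $W$'' is used correctly: a global rational planarization can fail to be any single normal form globally (it may have indeterminacy points, and different open pieces may look like different forms only if the map were not rational — but it is), yet locally on a small $W$ away from base points and ramification it matches one representative; one should state clearly that equivalence is required only on some nonempty open $W\subset V$, exactly as in the theorem, so no global compatibility is needed.
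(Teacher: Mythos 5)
Your overall route is the same as the paper's: reduce via the Main Theorem to the four cases, dispose of the trivial and co-trivial cases by normal forms $(T)$ and $(CT)$, invoke the classification of quadratic rational maps (the paper's Theorems \ref{t:qclass}--\ref{t:qclassR} for images in quadrics, \cite{CSS} for images in cubic and quartic surfaces) to get $(Q_1)$--$(Q_{10})$, and obtain $(C_1)$--$(C_6)$ by explicitly dualizing the non-self-dual quadratic forms. However, there is a genuine gap in your treatment of the co-trivial case. You argue that, after projecting from the common point $b=[0:0:0:1]$, ``the first three coordinates define a map $W\to\P^2$ which, after shrinking, is an embedding, so in suitable coordinates $\Phi=[x:y:z:f(x,y,z)]$.'' This deduction is invalid: a projective change of coordinates can never straighten an arbitrary smooth embedding of an open set of $\P^2$ into $\P^2$ to the identity (in an affine chart, $(x,y)\mapsto(x,y^3)$ is an embedding not projectively equivalent to the inclusion). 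The point you are missing is that co-triviality forces the composition $\pi\circ\Phi$ with the projection $\pi:\P^3\setminus\{b\}\to\P^2(b)$ to take lines to \emph{lines} (each plane through $b$ projects to a line), and it is the M\"obius--von Staudt theorem applied to $\pi\circ\Phi$ that yields a local projective transformation, whence the normalization to $[x:y:z]$ in the first three coordinates. You must also handle the degenerate subcase in which $\pi\circ\Phi$ maps an open set into a line (so it is an embedding on no open set); there $\Phi$ is trivial and falls under $(T)$. Without this step your claim that the restriction is ``an embedding'' is unjustified, and even granting it the conclusion does not follow.

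Two smaller remarks. In the dual-quadratic case, the possibility you mention that some $(Q_i)$ ``dualize to co-trivial maps'' cannot occur: if $\Phi^*$ were co-trivial then $\Phi=\Phi^{**}$ would be trivial, contrary to hypothesis; what actually happens is that $(Q_7)$--$(Q_{10})$ (the ones whose image surfaces are quadrics or cubics) are equivalent to their own duals and hence contribute no new cubic forms, while the duals of $(Q_1)$--$(Q_6)$ give $(C_1)$--$(C_6)$ after explicit computation. And in the quadratic case your bookkeeping plan is workable but should be organized as in the paper: the degree-two image surfaces are covered by Theorem \ref{t:qclassR} (only $\Phi_3=(Q_{10})$ survives non-co-triviality), while the degree-three and degree-four cases are exactly the classes $(Q_1)$--$(Q_9)$ taken from \cite{CSS}; no further real-form analysis is needed beyond what those two sources provide.
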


In Theorem \ref{t:nf}, the form $(T)$ represents all trivial planarizations,
and the form $(CT)$ represents all co-trivial planarizations.
These items correspond to infinitely many projective equivalence classes.
However, note that there are finitely many classes of nontrivial non-co-trivial
planarizations.

\begin{proof}
  By the Main Theorem, every point $a\in U$ has a neighborhood $V\subset U$
  such that the planarization $\Phi:V\to\P^3$ is trivial, co-trivial, quadratic
  or dual quadratic.
  Suppose first that $\Phi:V\to\P^3$ is trivial.
  This means by definition that there is a plane $P\subset\P^3$ such that
  $\Phi(V)\subset P$.
  By a projective coordinate change, we may assume that the plane $P$
  is given in homogeneous coordinates $[u_0:u_1:u_2:u_3]$ by $u_3=0$.
  Then the map $\Phi:V\to\P^3$ has form $(T)$.

  Suppose now that the planarization $\Phi:V\to\P^3$ is co-trivial but not trivial.
  Then, by definition, there is a point $b\in\P^3$ such that, for every line $L\subset\P^2$,
  there is a plane $P_L$ containing the set $\Phi(L\cap V)\cup\{b\}$.
  We may assume that $b=[0:0:0:1]$.
  Since $\Phi:V\to\P^3$ is not trivial, there is a nonempty open subset $W\subset V$
  such that $\Phi(W)\not\ni b$.
  Let $\P^2(b)$ be the projective plane formed by all lines in $\P^3$ passing through $b$,
  and let $\pi:\P^3\sm\{b\}\to\P^2(b)$ be the canonical projection mapping a point
  $c\ne b$ to the line $bc$.
  Note that the map $\Psi=\pi\circ\Phi:W\to\P^2(b)$ has the following property.
  For every line $L\subset\P^2$, the set $\Psi(W\cap L)$ is a subset of a line.
  By the M\"obius--von Staudt theorem, a map with this property must be
  a restriction of a projective transformation or a mapping from $W$ to a line,
  possibly after replacing $W$ with a smaller open set.
  In the second case, $\Phi:W\to\P^3$ is trivial, and therefore is equivalent to
  the form $(T)$.
  In the first case, the map $\Psi$ is given by $[x:y:z]\mapsto [x:y:z]$
  provided that we choose a suitable system of projective coordinates in $\P^2(b)$.
  Then the map $\Phi:W\to\P^3$ is given by
  $[x:y:z]\mapsto [x:y:z:f(x,y,z)]$ for some (sufficiently smooth) function $f$.

  Suppose now that the planarization $\Phi:V\to\P^3$ is quadratic but
  neither trivial nor co-trivial.
  Note that the image $\Phi(V)$ lies in a surface $S$ of degree 2, 3 or 4.
  If $S$ has degree 2, then $\Phi$ is equivalent to one of the maps $\Phi_{1a}$, $\Phi_{1b}$,
  $\Phi_2$, $\Phi_3$ from Theorem \ref{t:qclassR}.
  Since, by our assumption, $\Phi$ is not co-trivial, it must be equivalent to $\Phi_3$;
  the latter is redenoted by $(Q_{10})$ in the statement of the theorem.
  Suppose now that $S$ has degree 3 or 4.
  In this case, we refer to the results of \cite{CSS}.
  By \cite{CSS}, every quadratic rational map $\Phi$ such that $\Phi(\P^2\sm B_\Phi)$ is dense
  in a surface of degree 3 or 4 is equivalent to one of the maps $(Q_1)$--$(Q_9)$.

  Finally, suppose that the planarization $\Phi:V\to\P^3$ is dual quadratic but
  not trivial, not co-trivial, and not quadratic.
  Then its dual planarization is equivalent to one of the maps $(Q_1)$--$(Q_{13})$.
  A straightforward computation shows that the dual planarizations to
  $(Q_1)$--$(Q_6)$ are equivalent, respectively, to $(C_1)$--$(C_6)$.
  The planarizations $(Q_7)$--$(Q_9)$, characterized by the property that the
  corresponding surfaces in $\P^3$ are cubic, turn out to be equivalent to
  their dual planarizations.
  In particular, the dual planarizations of $(Q_7)$--$(Q_9)$ are quadratic.
\end{proof}

The equations of the surfaces parameterized by dual-quadratic planarizations
$(C_1)$--$(C_6)$ are
\begin{itemize}
\item[$(C_1)$:] $4t^3 - t(u^2 + v^2 + w^2) + uvw = 0$
\item[$(C_2)$:] $4t^3 + t(u^2 - v^2 + w^2) + uvw = 0$
\item[$(C_3)$:] $4vu^2 + u(t^2 - 4v^2 + w^2) - vw^2 = 0$
\item[$(C_4)$:] $4tu^2 - uw^2 + tv^2 = 0$
\item[$(C_5)$:] $u(vw - t^2) + vt^2 = 0$
\item[$(C_6)$:] $u(4tv - w^2) + v^3 = 0$,
\end{itemize}
where $[u:v:w:t]$ are homogeneous coordinates in $\P^3$. 
These equations are obtained by eliminating the three variables $x$, $y$, $z$ from
the four equations
$$
[u:v:w:t]=\Phi[x:y:z].
$$
We used \textit{Mathematica} to perform the computations. 
We provide figures of these surfaces below, see Figures 1--3.

The surfaces that are parameterized by maps $(Q_1)$--$(Q_9)$ have been studied in \cite{CSS}.
In particular, pictures of all these surfaces can be found there.

\begin{figure}[H]
 \includegraphics[height=4cm]{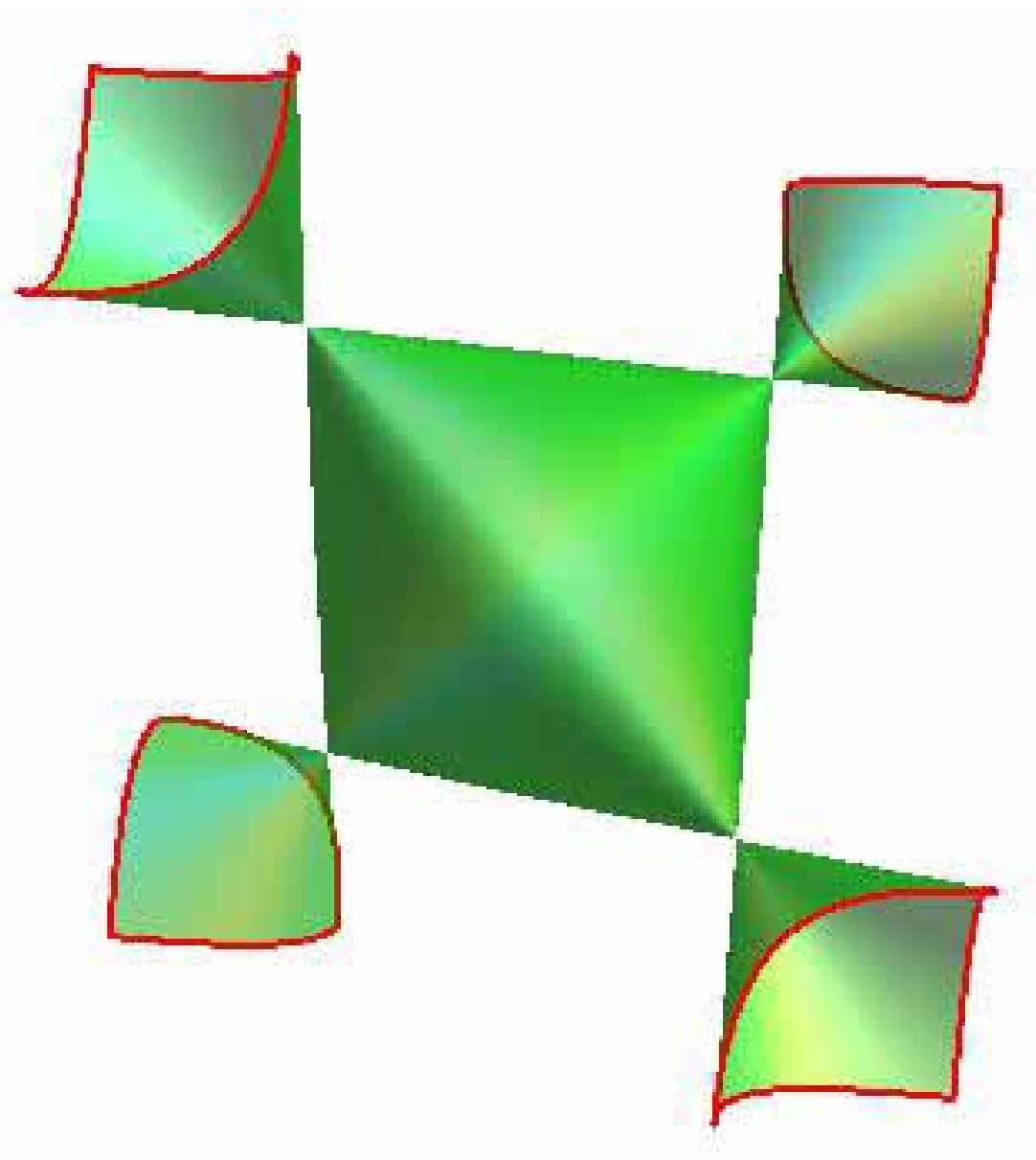}\hspace{1cm}
 \includegraphics[height=4cm]{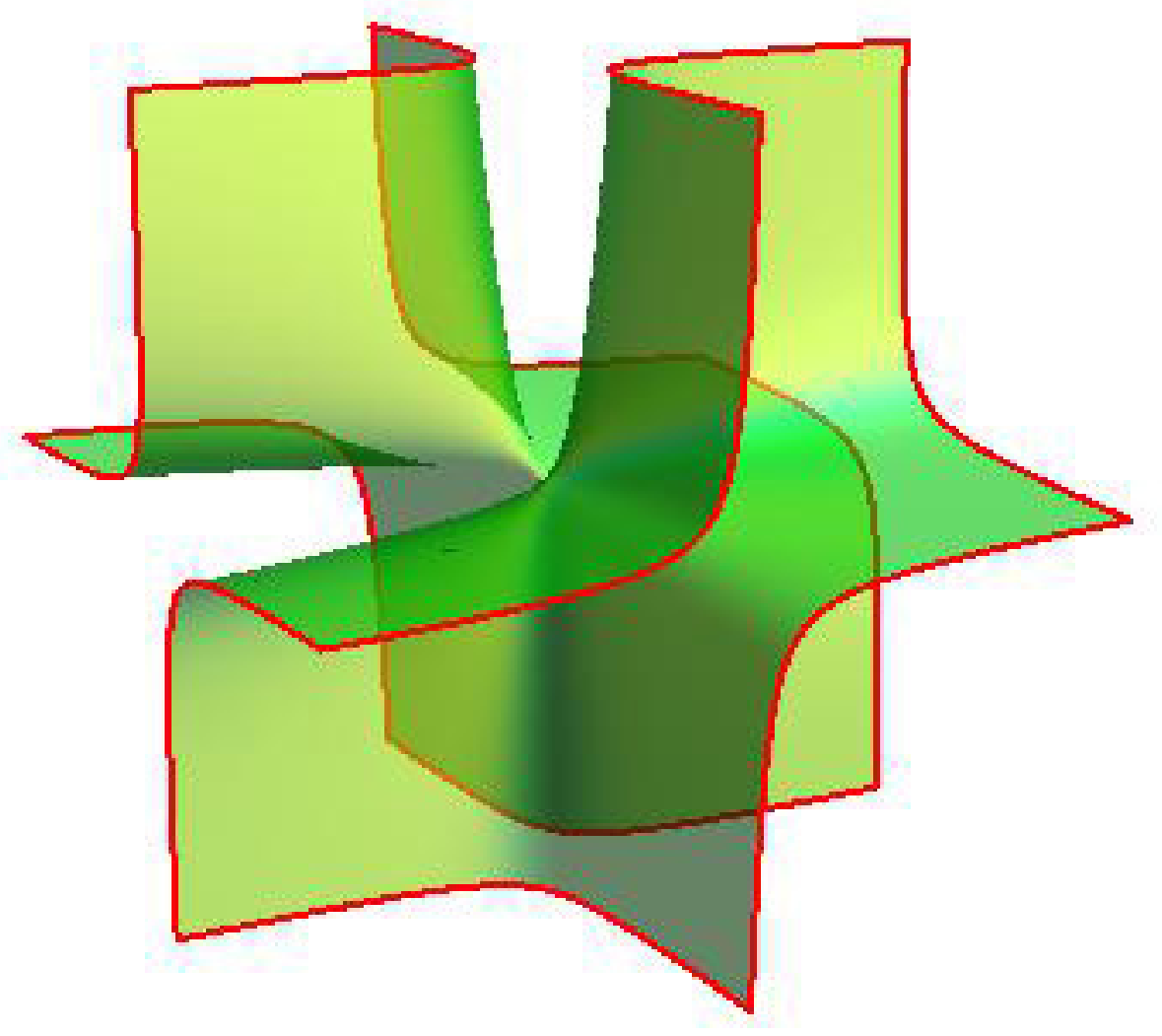}
\caption{The surfaces parameterized by $(C_1)$ (left) and by 
$(C_2)$ (right) in the affine chart $t=1$.}
\end{figure}

\begin{figure}[H]
 \includegraphics[height=4cm]{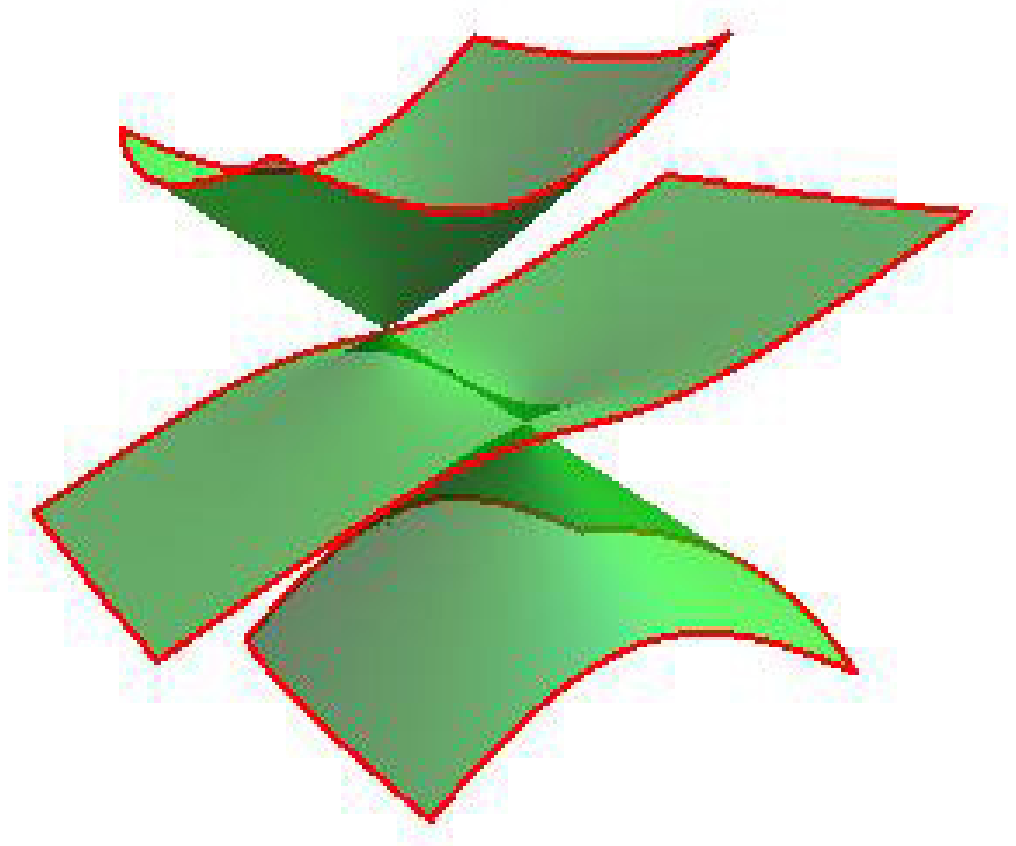}\hspace{1cm}
 \includegraphics[height=4cm]{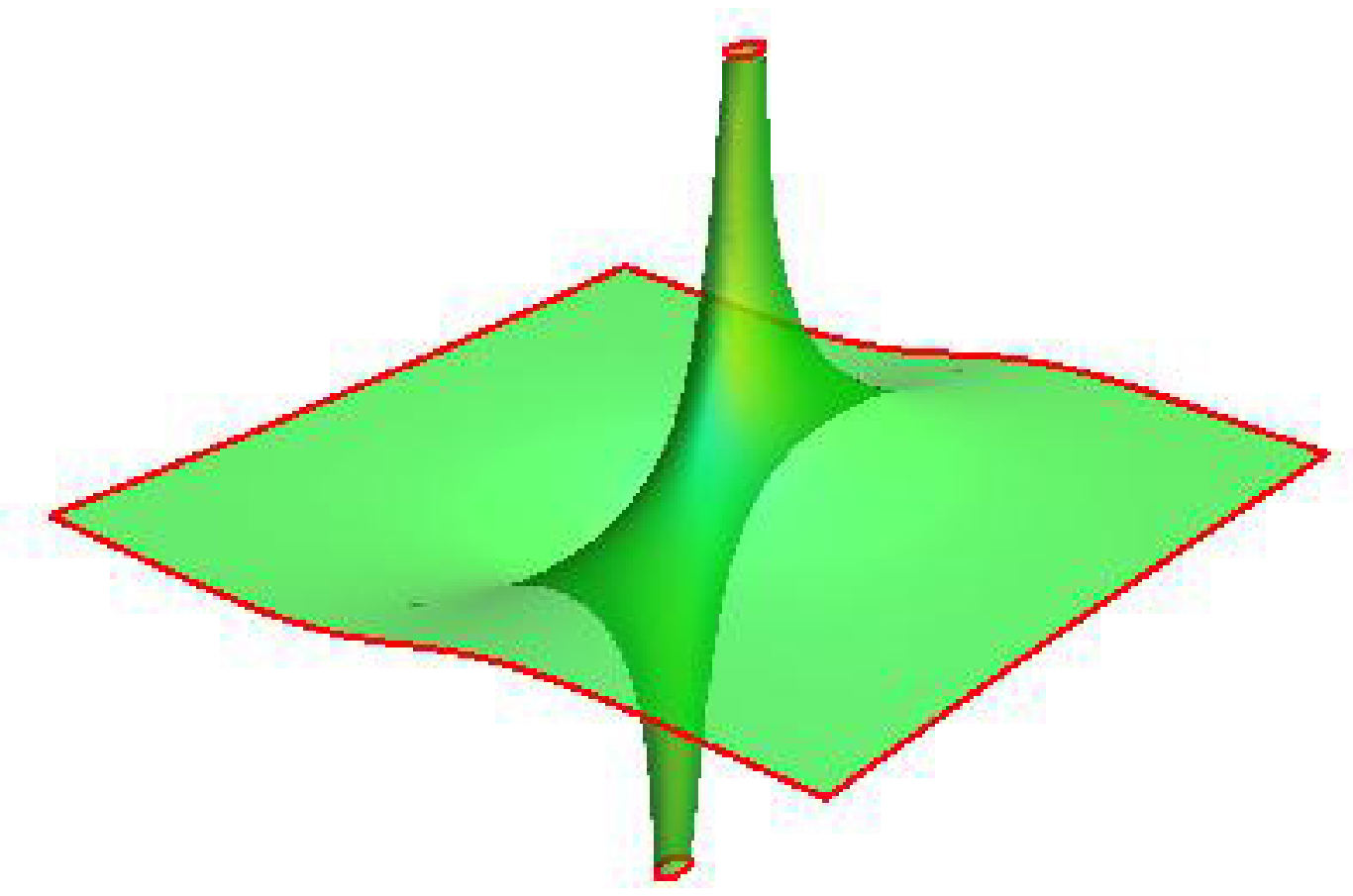}
\caption{The surfaces parameterized by $(C_3)$ in the affine chart $t=1$ (left)
and by $(C_4)$ in the affine chart $w=1$ (right).}
\end{figure}

\begin{figure}[H]
 \includegraphics[height=4cm]{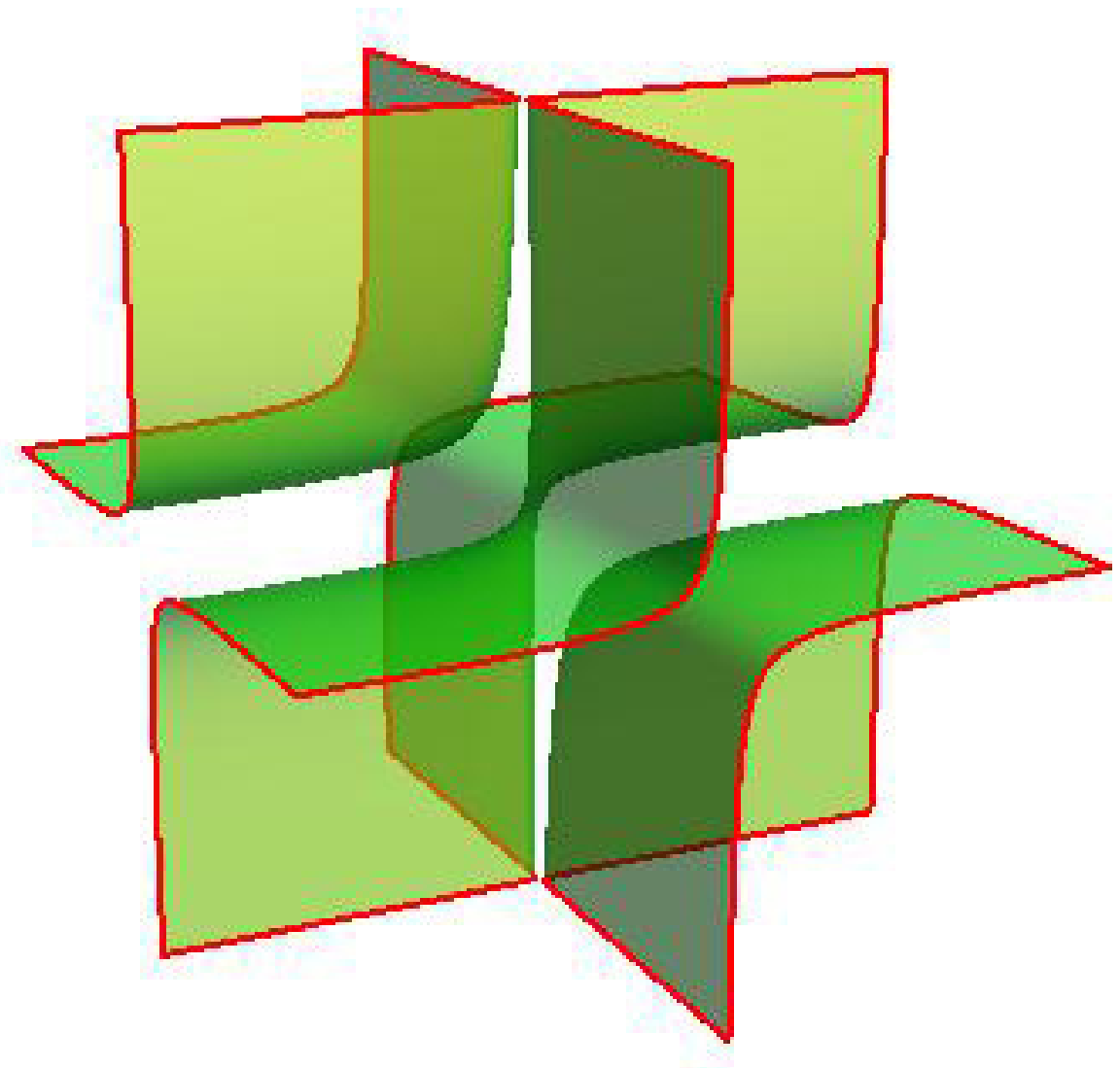}\hspace{1cm}
 \includegraphics[height=4cm]{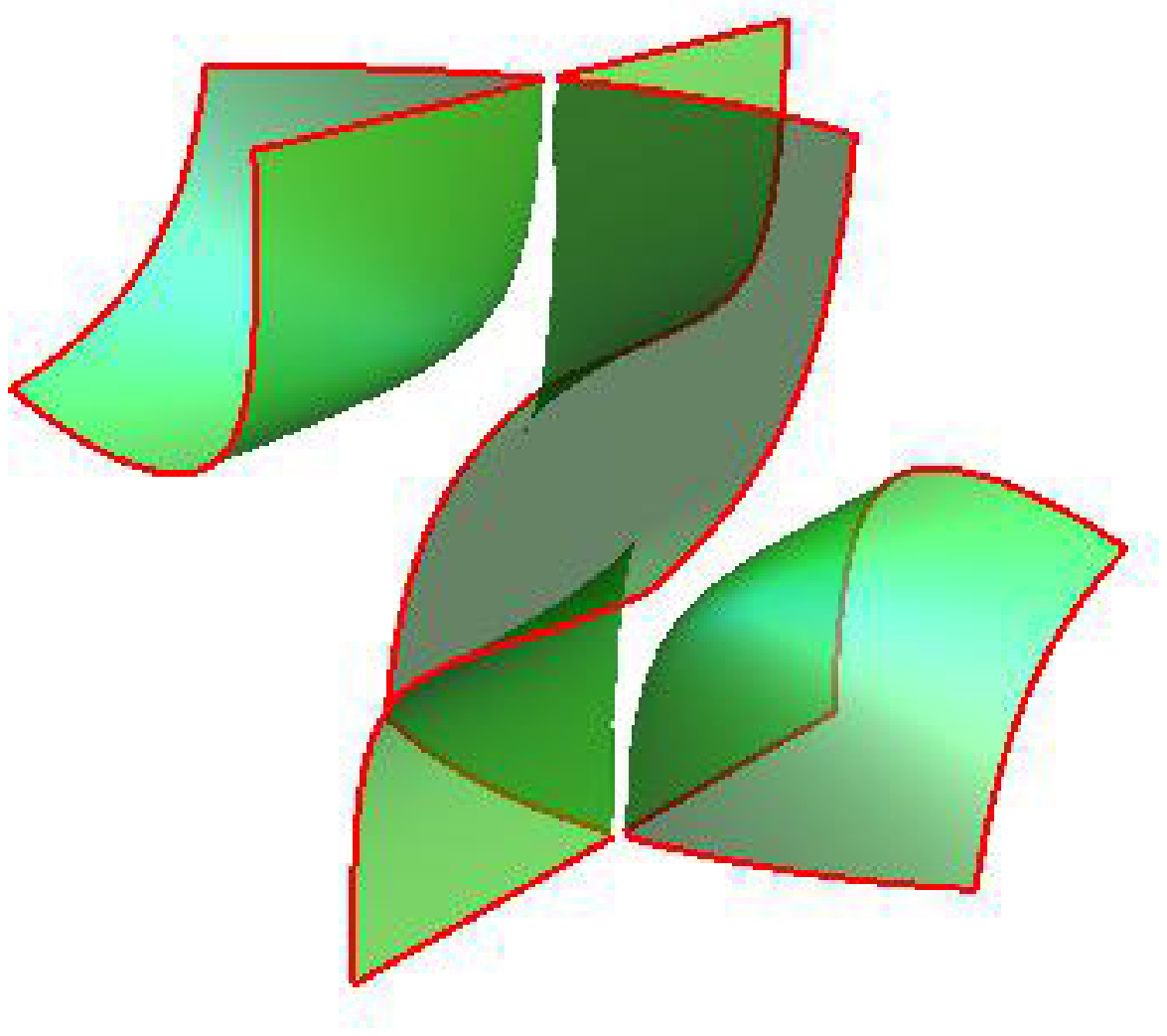}
\caption{The surface parameterized by $(C_5)$ in the affine chart $t=1$ (left)
and by $(C_6)$ in the affine chart $w=1$ (right).}
\end{figure}

\end{document}